\newtheorem{theorem}{Theorem}[section]
\newtheorem{lemma}[theorem]{Lemma}
\newtheorem{proposition}[theorem]{Proposition}
\newtheorem{corollary}[theorem]{Corollary}
\def\ep{\varepsilon}
\def\D{\mathbb D}
\def\C{\mathbb C}
\def\R{\mathbb R}
\def\S{\mathbb S}
\def\N{\mathbb N}
\def\pa{\partial}
\def\b{\backslash}
\def\H{\mathcal H}
\def\l{\langle}
\def\r{\rangle}
\def\Z{\mathbb Z}
\def\T{{\rm Tr}}
\def\K{\mathcal K}
\def\B{\mathcal B}
\begin{document}

\title[Estimate of the Steklov zeta function on $(-1,1)$]{An estimate for the Steklov zeta function of a planar domain derived from a first variation formula}
\author{Alexandre Jollivet and Vladimir Sharafutdinov}

\thanks{The first author is partially supported by the PRC n\textsuperscript{o} 2795 CNRS/RFBR : Probl\`emes inverses et int\'egrabilit\'e.}

\thanks{The second author is supported by Mathematical Center in Akademgorodok, the
agreement with Ministry of Science and High Education of the Russian
Federation number  075-15-2019-1613.}

\address{Laboratoire de Math\'ematiques Paul Painlev\'e,
CNRS UMR 8524/Universit\'e Lille 1 Sciences et Technologies,
59655 Villeneuve d'Ascq Cedex, France}
\email{alexandre.jollivet@math.univ-lille1.fr}

\address{Sobolev Institute of Mathematics, 4 Koptyug Avenue, Novosibirsk, 630090, Russia}
\address{Novosibirsk State University, 2 Pirogov street, Novosibirsk, 630090, Russia}
\email{sharaf@math.nsc.ru}

\keywords{Steklov spectrum; Dirichlet-to-Neumann operator; zeta function; inverse spectral problem}

\subjclass[2000]{Primary 35R30; Secondary 35P99}

\maketitle

\noindent
\begin{abstract}
We consider the Steklov zeta function
$\zeta_\Omega$
of a smooth bounded simply connected planar domain
$\Omega\subset \R^2$
of perimeter
$2\pi$.
We provide a first variation formula for
$\zeta_\Omega$
under a smooth deformation of the domain. On the base of the formula, we prove that, for every
$s\in (-1,0)\cup(0,1)$,
the difference
$\zeta_\Omega(s)-2\zeta_R(s)$
is non-negative and is equal to zero if and only if
$\Omega$
is a round disk
($\zeta_R$
is the classical Riemann zeta function). Our approach gives also an alternative proof of the inequality
$\zeta_\Omega(s)-2\zeta_R(s)\ge0$
for
$s\in (-\infty,-1]\cup(1,\infty)$;
the latter fact was proved in our previous paper [2018] in a different way. We also provide an alternative proof of the equality $\zeta'_\Omega(0)=2\zeta'_R(0)$
obtained by Edward and Wu [1991].
\end{abstract}

\section{Introduction}

Let
$\Omega$
be a simply connected
planar domain bounded by a
$C^\infty$-smooth closed curve
$\partial\Omega$.
The {\it Dirichlet-to-Neumann operator} of the domain
$$
\Lambda_\Omega:C^\infty(\partial\Omega)\rightarrow C^\infty(\partial\Omega)
$$
is defined by
$\Lambda_\Omega f=\left.\frac{\partial u}{\partial\nu}\right|_{\partial\Omega}$,
where
$\nu$
is the outward unit normal to
$\partial\Omega$
and
$u$
is the solution to the Dirichlet problem
$$
\Delta u=0\quad\mbox{\rm in}\quad\Omega,\quad u|_{\partial\Omega}=f.
$$
The Dirichlet-to-Neumann operator is a first order pseudodifferential operator. Moreover, it is a non-negative self-adjoint operator with respect to the $L^2$-product
$$
\l u,v\r=\int\limits_{\partial\Omega}u\bar v\,ds,
$$
where
$ds$
is the Euclidean arc length of the curve
$\partial\Omega$.
In particular, the operator
$\Lambda_\Omega$
has a non-negative discrete eigenvalue spectrum
$$
\mbox{\rm Sp}(\Omega)=\{0=\lambda_0(\Omega)<\lambda_1(\Omega)\leq\lambda_2(\Omega)\leq\dots\},
$$
where each eigenvalue is repeated according to its multiplicity. The spectrum is called the {\it Steklov spectrum} of the domain
$\Omega$.
Steklov eigenvalues depend on the size of
$\Omega$
in the obvious manner:
$\lambda_k(c\Omega)=c^{-1}\lambda_k(\Omega)$
for
$c>0$.
Therefore it suffices to consider domains satisfying the {\it normalization condition}
\begin{equation}
\mbox{Length}(\partial\Omega)=2\pi.
                                     \label{1.1}
\end{equation}

Under condition \eqref{1.1}, Steklov eigenvalues have the following asymptotics \cite[Theorem~1]{E}:
\begin{equation}
\lambda_k(\Omega)=\left\lfloor{k+1\over 2}\right\rfloor +O(k^{-\infty})\quad\mbox{as}\quad k\to\infty,
                                     \label{1.2}
\end{equation}
where
$\lfloor x\rfloor$
stands for the integer part of
$x\in\R$.
Due to the asymptotics, the {\it zeta function of the domain}
$\Omega$
$$
\zeta_\Omega(s)=\mbox{\rm Tr}[\Lambda_\Omega^{-s}]=\sum\limits_{k=1}^\infty\big(\lambda_k(\Omega)\big)^{-s}
$$
is well defined for
$\Re s>1$.
Then
$\zeta_\Omega$
extends to a meromorphic function on
${\mathbb C}$
with the unique simple pole at
$s=1$.
Moreover, the difference
$\zeta_\Omega(s)-2\zeta_R(s)$
is an entire function \cite{E}, where
$\zeta_R(s)=\sum_{n=1}^\infty n^{-s}$
is the classical Riemann zeta function. Observe also that
$\zeta_\Omega(s)$
is real for a real
$s$.

The main result of the present paper is the following

\begin{theorem} \label{Th1.1}
For a smooth simply connected bounded planar domain
$\Omega$
satisfying the normalization condition \eqref{1.1}, the inequality
\begin{equation}
\zeta_\Omega(s)-2\zeta_R(s)\ge0
                                     \label{1.3}
\end{equation}
holds for every real
$s$.
Moreover, if the equality in \eqref{1.3} holds for some real
$s\neq 0$,
then
$\Omega$
is the round disk of radius 1.
\end{theorem}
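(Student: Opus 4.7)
The plan is to base the entire argument on a first variation formula for $t\mapsto\zeta_{\Omega_t}(s)$ along a smooth one-parameter deformation of simply connected domains, and then to connect the target $\Omega$ to the unit disk through a smooth perimeter-preserving homotopy. The starting point would be a Hadamard-type formula for each individual Steklov eigenvalue: if the boundary moves with outward normal speed $\rho$ and $u_k$ is the $L^2(\partial\Omega)$-normalized eigenfunction with eigenvalue $\lambda_k$, one expects
$$
\dot\lambda_k=\int_{\partial\Omega}\rho\,\Phi_k\,ds,
$$
where $\Phi_k$ is an explicit quadratic expression in $u_k$ and its boundary derivatives (with a possible curvature term accounting for the normalization \eqref{1.1}). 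Multiplying by $-s\lambda_k^{-s-1}$ and summing over $k$, one obtains the variation of the zeta function as the integral of $\rho$ against the diagonal of the Schwartz kernel of a suitable pseudodifferential operator built from $\Lambda_\Omega^{-s-1}$, with the divergent sum made meaningful by meromorphic continuation in $s$.

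I would then interpolate between the unit disk and $\Omega$ via a smooth family of conformal maps $f_t:\D\to\Omega_t$, scaled so that each $\Omega_t$ satisfies \eqref{1.1}; then every quantity in the first variation formula becomes an explicit functional of $\varphi_t:=\log|f_t'|\big|_{\partial\D}$. At the disk the Steklov eigenfunctions are the trigonometric harmonics with eigenvalues $0,1,1,2,2,\dots$, which both recovers $\zeta_\D(s)=2\zeta_R(s)$ and diagonalizes the first variation at $t=0$ in terms of the Fourier coefficients of $\rho$. The Edward--Wu identity $\zeta_\Omega'(0)=2\zeta_R'(0)$ should drop out as the $s\to 0$ specialization of the formula after analytic continuation.

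Writing
$$
\zeta_\Omega(s)-2\zeta_R(s)=\int_0^1\dot\zeta_{\Omega_t}(s)\,dt,
$$
the task reduces to controlling the sign of the integrand for each real $s\ne 0$. The main obstacle I expect is precisely that the first variation at a generic intermediate $\Omega_t$ is not obviously sign-definite: only at the round disk are the eigenfunctions explicit. The deformation must therefore be chosen carefully, so that the underlying quadratic form in $\rho$ has the correct sign throughout the homotopy. I would try to express this form intrinsically as a quadratic form on $\partial\Omega_t$ whose underlying operator depends monotonically on $s$, and combine it with the previously established inequality on $|s|\ge 1$ to propagate the sign into the strip $s\in(-1,0)\cup(0,1)$. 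The equality case would then force the Fourier coefficients of $\rho$ to vanish identically along the homotopy, yielding $\Omega=\D$ (up to a rigid motion).
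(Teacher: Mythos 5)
Your high-level scheme—compute a first variation of $\zeta$ along a one-parameter family connecting $\Omega$ to the disk, then try to make the integrand sign-definite—is genuinely the same skeleton as the paper's proof. But there are two concrete gaps that the paper fills and your sketch does not, and both are essential.

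First, the choice of deformation is not a free parameter, and a generic ``smooth family of conformal maps scaled to preserve perimeter'' will not have a sign-definite first variation. The paper passes to the conformal factor $a(\theta)=|\Phi'(e^{i\theta})|^{-1}$ and deforms $a$ by a very specific nonlinear flow
$$
\frac{\partial\alpha_\tau}{\partial\tau}
=-\alpha_\tau\,(\Lambda\alpha_\tau)+(\H\alpha_\tau)(D\alpha_\tau),
$$
which corresponds to the choice $g=i\H\alpha_\tau$ in the variation formula. For this and only this choice the first variation collapses to
$$
\frac{\partial\zeta_{\alpha_\tau}(s)}{\partial\tau}
=s\,\T\big[(\Lambda_{\alpha_\tau}+P_{0,\tau})^{-s-1}(\Lambda_{\alpha_\tau}^2-D_{\alpha_\tau}^2)\big],
$$
and the sign analysis becomes tractable. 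Moreover this is a flow that carries $a$ \emph{towards} the constant $\mathbf{1}$ as $\tau\to\infty$ (i.e. $\Omega$ towards the disk), not a straight-line homotopy from the disk to $\Omega$; the global-in-time existence and convergence of the flow is itself a nontrivial part of the proof, resting on a compactness theorem for the sets $\K_c$ controlled by the zeta invariants, which are non-increasing along the flow. Your proposal omits any mechanism for producing or controlling such a path.

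Second, your plan for the sign of the integrand on $s\in(-1,0)\cup(0,1)$—to ``propagate'' the previously known inequality for $|s|\ge1$—is not an argument: the sign of $\T[(\Lambda_a+P_0)^{s-1}(\Lambda_a^2-D_a^2)]$ at one value of $s$ does not interpolate to another. The paper's Lemma 4.1 is self-contained: it applies a Jensen-type convexity/concavity inequality (Lemma 5.2) to the operator $\Lambda_a+P_0$ in the eigenbasis $\{\phi_n\}$ of $D_a$, with the ranges $s\ge2$, $s\le-2$, $1\le s\le 2$, $-2\le s\le -1$, $0<s\le1$, $-1\le s<0$ handled by different choices of power $r$; the cases $|s|<1$ in particular require introducing the auxiliary vectors $u=\delta_n^{-1}a^{-1/2}\H a^{1/2}\phi_n$, $v=\mathrm{sgn}(n)\phi_n$ and verifying the positivity/normalization hypotheses there, which is the most delicate step. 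Without the specific flow and without this convexity machinery (rather than an appeal to the known $|s|\ge1$ case), the sketch cannot close.
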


Inequality \eqref{1.3} was proved for a real
$s$
satisfying
$|s|\ge1$
in \cite[Theorem 1.1]{JS2}. We present a proof of Theorem \ref{Th1.1} which is independent of \cite{JS2} but heavily depends on the compactness arguments of \cite{JS3}.

As a corollary of Theorem \ref{Th1.1} and of the equality
$\zeta_\Omega(0)-2\zeta_R(0)=0$,
we obtain an alternative proof of the equality
$\zeta_\Omega'(0)=2\zeta_R'(0)$
obtained in \cite{EW}.

\bigskip

Now, we discuss an alternative approach to the same problem which is of a more analytical character.

Let
${\mathbb S}=\partial{\mathbb D}=\{e^{i\theta}\}\subset{\mathbb C}$
be the unit circle. The Dirichlet-to-Neumann operator of the unit disk
${\mathbb D}=\{(x,y)\mid x^2+y^2\leq1\}$
will be denoted by
$\Lambda:C^\infty({\mathbb S})\rightarrow C^\infty({\mathbb S})$,
i.e.,
$\Lambda=\Lambda_{\mathbb D}$.
The alternative definition of the operator is given by the formula
$\Lambda e^{in\theta}=|n|e^{in\theta}$
for an integer
$n$.
For a function
$b\in C^\infty({\mathbb S})$,
we write
$b(\theta)$
instead of
$b(e^{i\theta})$
and use the same letter
$b$
for the operator
$b:C^\infty({\mathbb S})\rightarrow C^\infty({\mathbb S})$
of multiplication by the function
$b$.

Given a positive function
$a\in C^\infty({\mathbb S})$,
the operator
$\Lambda_a=a^{1/2}\Lambda a^{1/2}$
has the non-negative discrete eigenvalue spectrum
$$
\mbox{\rm Sp}(\Lambda_a)=\{0=\lambda_0(a)<\lambda_1(a)\leq\lambda_2(a)\leq\dots\}
$$
which is called the {\it Steklov spectrum of the function}
$a$
(or of the operator
$\Lambda_a$).

Two kinds of the Steklov spectrum are related as follows. Given a smooth simply connected planar domain
$\Omega$,
choose a biholomorphism
$\Phi:{\mathbb D}\rightarrow\Omega$
and define the function
$0<a\in C^\infty({\mathbb S})$
by
$a(\theta)=|\Phi'(e^{i\theta})|^{-1}$.
Let
$\phi:{\mathbb S}\rightarrow\partial\Omega$
be the restriction of
$\Phi$
to
${\mathbb S}$.
Then
$\Lambda_a=a^{-1/2}\phi^*\Lambda_\Omega\,\phi^{*-1}a^{1/2}$
and
$\mbox{\rm Sp}(\Lambda_a)=\mbox{\rm Sp}(\Omega)$.
Two latter equalities make sense for an arbitrary positive function
$a\in C^\infty({\mathbb S})$
if we involve multi-sheet domains into our consideration. See \cite[Section 3]{JS} for details. Theorem \ref{Th1.1} is true for multi-sheet domains as well. The normalization condition \eqref{1.1} is written in terms of the function
$a$
as follows:
\begin{equation}
\frac{1}{2\pi}\int\limits_0^{2\pi}\frac{d\theta}{a(\theta)}=1.
                                     \label{1.4}
\end{equation}

The biholomorphism
$\Phi$
of the previous paragraph is defined up to a conformal transformation of the disk
$\D$,
this provides examples of functions with the same Steklov spectrum. Two functions
$a,b\in C^\infty({\mathbb S})$
are said to be {\it conformally equivalent}, if there exists a conformal or anticonformal transformation
$\Psi$
of the disk
${\mathbb D}$
such that
$b=|d\psi/d\theta|^{-1}a\circ\psi$,
where the function
$\psi(\theta)$
is defined by
$e^{i\psi(\theta)}=\Psi(e^{i\theta})$
($\Psi$
is anticonformal if
$\bar\Psi$
is conformal).
If two positive functions
$a,b\in C^\infty({\mathbb S})$
are conformally equivalent, then
$\mbox{Sp}(a)=\mbox{Sp}(b)$.

Under condition \eqref{1.4}, Steklov eigenvalues
$\lambda_k(a)$
have the same asymptotics \eqref{1.2}.
The {\it zeta function of}
$a$
is defined by
\begin{equation}
\zeta_a(s)=\mbox{\rm Tr}[\Lambda_a^{-s}]=\sum\limits_{k=1}^\infty\big(\lambda_k(a)\big)^{-s}
                                            \label{1.5}
\end{equation}
for
$\Re(s)>1$.
It again extends to a meromorphic function on
${\mathbb C}$
with the unique simple pole at
$s=1$
such that
$\zeta_a(s)-2\zeta_R(s)$
is an entire function.

The analytical version of Theorem \ref{Th1.1} sounds as follows:

\begin{theorem} \label{Th1.2}
For a positive function
$a\in C^\infty({\mathbb S})$
satisfying the normalization condition \eqref{1.4}, the inequality
\begin{equation}
\zeta_a(s)-2\zeta_R(s)\ge0
                                     \label{1.6}
\end{equation}
holds for every real
$s$.
Moreover, the equality in \eqref{1.6} holds for some real
$s\neq 0$
if and only if
$a$
is conformally equivalent to
$\mathbf 1$
(= the constant function identically equal to 1).
\end{theorem}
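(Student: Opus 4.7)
The cases $|s|\ge 1$ are contained in \cite{JS2}, and the case $s=0$ reduces to the identity $\zeta_a(0)=2\zeta_R(0)$; assume therefore $s\in(-1,0)\cup(0,1)$. The strategy is variational: minimize the functional $F_s(a):=\zeta_a(s)-2\zeta_R(s)$ on the space $\mathcal{N}$ of positive $a\in C^\infty(\S)$ satisfying \eqref{1.4}, modulo the action of the conformal group $\mathrm{PSU}(1,1)$ of $\D$. Since $\Lambda e^{in\theta}=|n|e^{in\theta}$, one has $F_s(\mathbf 1)=0$; hence it suffices to prove that $\mathbf 1$ realizes the infimum and that its conformal orbit exhausts the minimizers.

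First I would derive the first variation of $\zeta_a$ along a smooth path $a_t\in\mathcal{N}$ with $a_0=a$. Starting from $\Lambda_{a_t}=a_t^{1/2}\Lambda a_t^{1/2}$ together with a Mellin representation of $\Lambda_{a_t}^{-s}$, extended by analytic continuation to the strip $\Re s\in(-1,1)$, a direct calculation should yield a formula of the shape
\begin{equation*}
\delta F_s(a)[\dot a]=\int_0^{2\pi}\dot a(\theta)\, k_{s,a}(\theta)\, d\theta,
\end{equation*}
where $k_{s,a}\in C^\infty(\S)$ is an explicit diagonal kernel associated with a symmetrization of $a^{-1}\Lambda_a^{-s}$. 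Note that the tangent space to $\mathcal{N}/\mathrm{PSU}(1,1)$ at $a$ is of codimension four inside $C^\infty(\S)$: one dimension is killed by \eqref{1.4} and three by the infinitesimal conformal action.

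Second, I would establish that $F_s$ attains its infimum on $\mathcal{N}/\mathrm{PSU}(1,1)$. Fixing a M\"obius gauge (e.g.\ by normalizing the conformal centroid of $a^{-1}d\theta$ to lie at the origin), I would apply the compactness machinery of \cite{JS3} to extract from a minimizing sequence a $C^\infty$-convergent subsequence with a positive limit $a_*\in\mathcal{N}$. This is where I expect the main technical obstacle: the non-compactness of $\mathrm{PSU}(1,1)$ forces one to rule out concentration of mass towards a point of $\S$, and the single normalization \eqref{1.4} is insufficient for this; the finer spectral compactness in \cite{JS3} is precisely what is needed to preclude such degeneration across the range $|s|<1$.

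Third, I would exploit the Euler--Lagrange condition at $a_*$: the vanishing of $\delta F_s(a_*)[\dot a]$ for every $\dot a$ tangent to $\mathcal{N}/\mathrm{PSU}(1,1)$ forces $k_{s,a_*}$ to lie in the four-dimensional subspace spanned by $a_*^{-2}$ and by the three infinitesimal generators of $\mathrm{PSU}(1,1)$ acting on $\mathcal{N}$. A spectral expansion of $k_{s,a_*}$ in the eigenbasis of $\Lambda_{a_*}$, matched against the explicit expansion obtained at $a=\mathbf 1$, should then force $a_*$ to lie on the conformal orbit of $\mathbf 1$. Combined with the first two steps, this gives $F_s(a)\ge F_s(a_*)=F_s(\mathbf 1)=0$ and characterizes the equality case as the conformal orbit of $\mathbf 1$.
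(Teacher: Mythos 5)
Your proposal takes a genuinely different route from the paper, but it has two concrete gaps, the first of which I think is fatal as stated.

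\textbf{Compactness gap.} The compactness theorem you want to invoke from \cite{JS3} (reproduced here as Theorem \ref{Th6.1}) gives precompactness of a set of functions $b$ in $C^\infty(\S)$ under the hypothesis that \emph{all} the zeta invariants $\zeta_b(-2m)$, $m=1,2,\dots$, are bounded, together with a bound on $\hat b_0$ and on $\zeta_b(-1)$. If you fix a single $s\in(-1,0)\cup(0,1)$ and take a minimizing sequence for $F_s(a)=\zeta_a(s)-2\zeta_R(s)$, there is no a priori reason for the higher zeta invariants $\zeta_a(-2m)$ of that sequence to stay bounded; controlling one value of the zeta function says nothing about the others. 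This is precisely why the paper does not minimize $F_s$ directly: it constructs a specific flow (equation \eqref{1.8}) along which \emph{every} $\zeta_{\alpha_\tau}(s)$, $s\in\R$, is simultaneously non-increasing (Corollary \ref{C4.1}), so that in particular $\hat{(\alpha_\tau)}_0$, $\zeta_{\alpha_\tau}(-1)$ and all $\zeta_{\alpha_\tau}(-2m)$ stay bounded, and the orbit of the flow lies in a fixed compact set $\K_c$. The gauge-fixing of the M\"obius group that you mention is a separate issue and does not address this.

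\textbf{Euler--Lagrange gap.} Your third step --- that vanishing of $\delta F_s(a_*)$ on the quotient tangent space ``should force $a_*$ to lie on the conformal orbit of $\mathbf 1$'' --- is asserted without argument, and this is where most of the actual content lies. Classifying critical points of $F_s$ modulo conformal symmetry is not easier than the original problem. The paper avoids it entirely: it never analyzes the full Euler--Lagrange system. Instead it fixes one distinguished direction of variation, $\beta=-a(\Lambda a)+(\H a)(Da)$ (equivalently $g=i\H a$), for which the first variation takes the closed form \eqref{4.5}, and then proves the one-sided estimate of Lemma \ref{L4.1} (a convexity argument built on \eqref{5.16}--\eqref{5.17} and Lemma \ref{L5.2}) showing that this directional derivative has a sign for every real $s\neq0$, with equality only for conformal $a\sim\mathbf 1$. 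Monotonicity along the flow plus convergence $\alpha_\tau\to\mathbf 1$ (Theorems \ref{Th6.2}--\ref{Th6.3}) then gives both the inequality and the rigidity, without ever needing to locate critical points.

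In short, the variational-in-$a$ (rather than flow) framing is a legitimate alternative philosophy, but to make it work you would still need to (i) replace the single-$s$ minimization by something that controls all the zeta invariants simultaneously, and (ii) supply the actual rigidity argument at a critical point. Both steps would likely reduce to the sign lemma \ref{L4.1} and the flow construction that the paper uses, so the apparent shortcut does not materialize.
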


The second statement of the theorem is not true for
$s=0$
since
$\zeta_a(0)=2\zeta_R(0)=-1$
for every positive function
$a\in C^\infty({\mathbb S})$
satisfying the normalization condition \eqref{1.4}. Observe also that
$\zeta_{\mathbf 1}=2\zeta_{R}$.

Theorems \ref{Th1.1} and \ref{Th1.2} are equivalent if multi-sheet domains are involved into Theorem~\ref{Th1.1} (see \cite{JS2} for instance).

\bigskip

We use the derivative
$D=-i\frac{d}{d\theta}:C^\infty({\mathbb S})\rightarrow C^\infty({\mathbb S})$.
The Hilbert space
$L^2(\S)$
is considered with the standard scalar product
$$
\l u,v\r=\int_{\S}u(\theta)\overline{v(\theta)}\,d\theta.
$$
The {\it Hilbert transform}
$\H$
is the linear operator on
$L^2(\S)$
defined by
$$
\H({\mathbf 1})=0,\quad \H e^{in\theta}={\rm sgn}(n)e^{in\theta}\ \textrm{for an integer}\ n\neq0.
$$
(We emphasize that
$\H$
differs from the operator
$H$
that is also called the Hilbert transform in \cite{JS3}. In particular,
$H$
is a unitary operator while
$\H$
has the one-dimensional kernel consisting of constant functions.)

Our proof of Theorem \ref{Th1.2} is based on a clever deformation of the function
$a$.
A real function
$\alpha\in C^\infty\big((-\varepsilon,\varepsilon)\times{\mathbb S}\big)$
is called a {\it deformation} (or {\it variation}) of a positive function
$a\in C^\infty({\mathbb S})$
if
$\alpha(0,\theta)=a(\theta)$.
For such a deformation; the function
$\alpha_\tau\in C^\infty({\mathbb S})$,
defined by
$\alpha_\tau(\theta)=\alpha(\tau,\theta)$,
is positive for sufficiently small
$|\tau|$.
Without lost of generality (choosing a smaller
$\varepsilon>0$)
we will assume that
$\alpha_\tau$
is positive for all
$\tau\in(-\varepsilon,\varepsilon)$.
Then the zeta function
$\zeta_{\alpha_\tau}$
is well defined. In Sections 2--3 we will prove that
$\zeta_{\alpha_\tau}(z)$
smoothly depends on
$(z,\tau)$
for
$1\neq z\in{\mathbb C}$
and will compute the derivative
$\frac{\partial\zeta_{\alpha_\tau}(z)}{\partial\tau}$
(Lemma \ref{L3.6}). We will also prove that
$\zeta_{\alpha_\tau}(z)$
is continuous in
$\tau$
for
$1\neq z\in{\mathbb C}$
when
$\alpha_\tau$
(belonging to
$C^\infty(\S)$)
is only continuous in
$\tau$.
The rest of the paper is devoted to the proof of the following statement

\begin{theorem} \label{Th1.3}
Given a positive function
$a\in C^\infty({\mathbb S})$
satisfying the normalization condition \eqref{1.4}, there exists a deformation
$\alpha_\tau\ (0\le\tau<\infty)$
of
$a$
such that

{\rm (1)} for every
$\tau\in[0,\infty)$,
the function
$\alpha_\tau$
is positive and satisfies the same normalization condition
\begin{equation}
\int\limits_{\mathbb S}\alpha_\tau^{-1}(\theta)\,d\theta=2\pi;
                                     \label{1.7}
\end{equation}

{\rm (2)} the deformation satisfies the equation
\begin{equation}
{\pa\alpha_\tau\over \pa\tau}=-\alpha_\tau (\Lambda \alpha_\tau)+(\H\alpha_\tau) (D\alpha_\tau)\quad\textrm{for}\quad \tau\in[0,\infty);
                                       \label{1.8}
\end{equation}

{\rm (3)} the derivative
$\frac{\partial\zeta_{\alpha_\tau}(s)}{\partial\tau}$
is non-positive for every real
$s$
and every
$\tau\in[0,\infty)$;

{\rm (4)}
$\alpha_\tau$
converges to
$\mathbf 1$
as
$\tau\rightarrow\infty$
in the
$C^\infty$-topology of
$C^\infty({\mathbb S})$.

Moreover, if
$\ \frac{\partial\zeta_{\alpha_\tau}(s)}{\partial\tau}=0$
for some
$0\neq s\in{\R}$
and for all
$\tau\in[0,\infty)$,
then
$a$
is conformally equivalent to
$\mathbf 1$.
\end{theorem}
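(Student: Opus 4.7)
The plan is to define $\alpha_\tau$ as the solution of the initial value problem for equation (1.8) with $\alpha_0=a$ and to verify the four conclusions plus rigidity in turn. Writing (1.8) as $\pa_\tau\alpha=F(\alpha)$ with $F(\alpha)=-\alpha(\Lambda\alpha)+(\H\alpha)(D\alpha)$, I would first establish local-in-$\tau$ existence of a smooth family $\alpha_\tau\in C^\infty(\S)$ by standard quasilinear evolution methods in a Sobolev scale. Positivity of $\alpha_\tau$ is preserved by a maximum-principle argument: at a minimum $\theta_0$ of $\alpha_\tau$ one has $D\alpha_\tau(\theta_0)=0$, while the Hopf lemma applied to the harmonic extension of $\alpha_\tau$ gives $\Lambda\alpha_\tau(\theta_0)\le 0$, so $\pa_\tau\alpha_\tau(\theta_0)=-\alpha_\tau(\theta_0)\Lambda\alpha_\tau(\theta_0)\ge 0$ and $\min_\theta\alpha_\tau$ is non-decreasing.

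Property (2) of Theorem \ref{Th1.3} is tautological and the normalization in (1) is algebraic. Differentiating $\int_{\S}\alpha_\tau^{-1}\,d\theta$ and substituting (1.8) yields $\int\alpha_\tau^{-1}\Lambda\alpha_\tau\,d\theta-\int\alpha_\tau^{-2}(\H\alpha_\tau)(D\alpha_\tau)\,d\theta$; using $\alpha_\tau^{-2}D\alpha_\tau=-D\alpha_\tau^{-1}$, integration by parts on $\S$, and the identity $\Lambda=\H D=D\H$, the two integrals cancel. The monotonicity in (3) is the algebraic heart of the construction: substituting (1.8) into the first-variation formula of Lemma \ref{L3.6} and applying cyclic trace manipulations together with $\Lambda=\H D$ should recast $\pa_\tau\zeta_{\alpha_\tau}(s)$ into a form that is manifestly non-positive for every real $s$, most likely a negative quadratic form of the shape $-\T[R_s^*R_s]$ for some operator $R_s$ depending on $s$ and $\alpha_\tau$. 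Equation (1.8) appears to have been chosen precisely so that such a clean identity holds independently of the sign of $s$.

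Global existence in (1) and the convergence in (4) constitute the main obstacle. The Lyapunov functional $J(\tau)=\zeta_{\alpha_\tau}(s_0)$ at some fixed $s_0>1$ is non-increasing by (3) and bounded below, hence converges as $\tau\to\infty$. Combined with the $C^\infty(\S)$-compactness arguments of \cite{JS3} applied to the trajectory, this should yield uniform higher-Sobolev bounds on $\alpha_\tau$, precluding finite-time blow-up (thereby extending the flow to $\tau\in[0,\infty)$) and delivering pre-compactness of the orbit. A LaSalle-invariance argument then identifies any $\omega$-limit point $\alpha_\infty$ as a stationary point of (1.8) on which the derivative in (3) vanishes; the equality analysis of step (3) shows such points are precisely the functions conformally equivalent to $\mathbf 1$, and the normalization \eqref{1.4} pins down $\alpha_\infty=\mathbf 1$, proving (4). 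The final rigidity statement follows analogously: if $\pa_\tau\zeta_{\alpha_\tau}(s)\equiv 0$ for some $s\ne 0$, the equality case of the trace inequality in (3) evaluated at $\tau=0$ forces $a$ itself to be conformally equivalent to $\mathbf 1$.
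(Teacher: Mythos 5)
Your proposal has the right high-level architecture — solve the Cauchy problem for \eqref{1.8}, show normalization and positivity are preserved, get monotone decay of $\zeta_{\alpha_\tau}$ from the first variation formula, and combine a compactness argument with a LaSalle-type limit identification — and several of your observations (the normalization computation, the Hopf-lemma argument for preservation of positivity) are correct and closely parallel Lemma~\ref{L6.3} and the use of \eqref{6.2}.  However, two of the steps that carry almost all of the mathematical content are either handwaved or based on a misapprehension.

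First, the monotonicity claim in (3).  You assert that substituting $\beta=\B(\alpha_\tau)$ into the first variation formula ``should recast $\pa_\tau\zeta_{\alpha_\tau}(s)$ into a manifestly non-positive form, most likely $-\T[R_s^*R_s]$''.  This is not what happens, and a $-\T[R_s^*R_s]$ identity would in fact be too strong.  The paper's Theorem~\ref{Th4.1} shows that for the choice $g=i\H a$ the first variation equals $z\,\T\big[(\Lambda_a+P_0)^{-z-1}(\Lambda_a^2-D_a^2)\big]$, and the quantity $\T\big[(\Lambda_a+P_0)^{s-1}(\Lambda_a^2-D_a^2)\big]$ is \emph{not} signed: Lemma~\ref{L4.1} shows it is $\ge 0$ for $s>0$ and $\le 0$ for $s<0$, and only after multiplying by $z$ does one get a consistent $\le 0$.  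Proving these two inequalities occupies the whole of Section~\ref{S5} and requires a six-case convexity analysis (Lemma~\ref{L5.2}) together with the lower bounds \eqref{5.16}--\eqref{5.17} from \cite{JS2}.  This is the crux of the theorem, and your sketch elides it entirely while guessing a structural form that does not hold.

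Second, existence of the flow.  You invoke ``standard quasilinear evolution methods in a Sobolev scale'' to get local existence, but \eqref{1.8} is a nonlocal first-order quasilinear equation ($\Lambda$ and $D$ both lose one derivative) with no dissipative or symmetric-hyperbolic structure to invoke; it is not at all clear that off-the-shelf theory applies.  The paper avoids this difficulty by a key structural observation (Lemma~\ref{L6.1}): the right-hand side $\B(b)=-4\Re(b_+\Lambda\bar b_+)$ \emph{preserves} finite Fourier support, so for trigonometric-polynomial data \eqref{1.8} reduces to a finite Riccati-type ODE system \eqref{6.14}.  Global-in-time existence for that system is then obtained from a compactness set $\K_c$ (Theorem~\ref{Th6.1}, tailored via zeta invariants rather than a generic Sobolev bound), and the general case is recovered by approximating $a$ by its truncations $a^{(N)}$ and passing to the limit via the uniform bounds \eqref{6.25}.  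Your sketch would also need essentially this machinery — in particular the monotonicity of $\zeta_{\alpha_\tau}(-2m)$ for \emph{all} $m$, not a single $s_0>1$, to place the orbit in a fixed compact — and the $\omega$-limit analysis is carried out in the paper via the monotone quantity $\int_\S\alpha_\tau$ (Lemma~\ref{L6.2}, \eqref{6.17}--\eqref{6.18}) rather than the zeta function itself.  As written, the proposal identifies the right landmarks but leaves the two load-bearing steps unjustified.
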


{\bf Remark.} The right-hand side of the formula
$$
\frac{\partial\zeta_{\alpha_\tau}(s)}{\partial\tau}=\frac{\partial}{\partial\tau}\Big(\zeta_{\alpha_\tau}(s)-2\zeta_R(s)\Big)
$$
makes sense for any
$s\in\C$
since
$\zeta_{\alpha_\tau}-2\zeta_R$
is an entire function. In virtue of the formula, the derivative
$\frac{\partial\zeta_{\alpha_\tau}(s)}{\partial\tau}$
is well defined for all
$s\in\C$
although the zeta function
$\zeta_{\alpha_\tau}(s)$
is not defined at the pole
$s=1$.

Theorem \ref{Th1.2} follows from Theorem \ref{Th1.3} as is shown in Section 4.

\section{Asymptotic behavior of eigenvalues and eigenspaces}

\subsection{Uniform asymptotics of the Steklov eigenvalues}

For a positive function
$a\in C^\infty({\S})$,
we introduce the operator
$D_a=a^{1/2}Da^{1/2}$.
Recall also that
$\Lambda_a=a^{1/2}\Lambda a^{1/2}$.

Let
$\alpha_\tau\ (-\varepsilon<\tau<\varepsilon)$
be a deformation of a positive function
$a\in C^\infty({\S})$.
Recall that the function
$\alpha_\tau$
is assumed to be positive for every
$\tau\in(-\varepsilon,\varepsilon)$.
Smooth deformations
$\alpha\in C^\infty\big((-\varepsilon,\varepsilon)\times{\mathbb S}\big)$
are used in the most part of the paper. But in Section 4 for our main results, we will need a continuous deformation
$\alpha\in C^0\big((-\ep,\ep), C^\infty(\S)\big)$
which can be not smooth. Therefore we assume now that
$\alpha\in C^l\big((-\ep,\ep), C^\infty(\S)\big)$
with some integer  $0\le l\le\infty$.
We also assume that both
$a$
and
$\alpha_\tau$
satisfy the normalization conditions \eqref{1.4} and \eqref{1.7} respectively.

Given a deformation
$\alpha_\tau$
of a function
$a$,
we introduce the operators
$$
A_\tau=\Lambda_{\alpha_\tau}^2+I,\quad B_\tau=D_{\alpha_\tau}^2+I,\quad \Delta_\tau=A_\tau-B_\tau,
$$
where
$I$
is the identity operator.
By \cite[Section 5.4]{JS}, the commutator
$[\alpha_\tau,\H]$
is a smoothing operator with the Schwartz kernel
$$
\tilde K (\tau,\theta,\theta')={1\over {2\pi}}\big(\alpha_\tau(\theta)-\alpha_\tau(\theta')\big)\cot{\theta-\theta'\over 2}.
$$
Therefore
$\Delta_\tau=\alpha_\tau^{1/2}D\H[\alpha_\tau,\H]D\alpha_\tau^{1/2}$
is also a smoothing operator with the Schwartz kernel
\begin{equation}
K(\tau,\theta,\theta')=\alpha_\tau^{1/2}(\theta)\Big(\H_\theta D_\theta D_{\theta'} \tilde K (\tau,\theta,\theta')\Big)\alpha_\tau^{1/2}(\theta').
                                                  \label{2.1}
\end{equation}

For
$n\in\Z$
and
$\tau\in(-\ep,\ep)$,
we define the function
$\phi_{n,\tau}\in C^\infty(\S)$
by
\begin{equation}
\phi_{n,\tau}(\theta)=(2\pi)^{-{1/2}}\big(\alpha_\tau(\theta)\big)^{-1/2}\exp\Big[in\int_0^\theta \alpha_\tau^{-1}(s)\,ds\Big].
                                                 \label{2.2}
\end{equation}
By \cite[Lemma 2.1]{JS2},
$\big\{\phi_{n,\tau}\big\}_{n\in\Z}$
is the orthonormal basis of
$L^2(\S)$
consisting of eigenfunctions of the operator
$D_{\alpha_\tau}$
such that
$D_{\alpha_\tau}\phi_{n,\tau}=n\phi_{n,\tau}$.
For
$l\in \N$,
let us denote
$u_{2l,\tau}=\phi_{l,\tau}$
and
$u_{2l+1,\tau}=\phi_{-l,\tau}$ and we also denote $u_0=\phi_{0,\tau}$.

Let
$\mbox{Sp}(\Lambda_{\alpha_\tau})=\{0=\lambda_{0,\tau}<\lambda_{1,\tau}\le\lambda_{2,\tau}\le\dots\}$
be the eigenvalue spectrum of the operator
$\Lambda_{\alpha_\tau}$.
We repeat Edward's argument \cite[Theorem 1]{E} to prove the following statement.

\begin{lemma} \label{L2.1}
Let
$\alpha\in C^0\big((-\ep,\ep), C^\infty(\S)\big)$
be a continuous deformation of a positive function
$a\in C^\infty({\S})$
such that every
$\alpha_\tau$
is also a positive function. Assume
$a$
and
$\alpha_\tau$
to satisfy the normalization conditions \eqref{1.4} and \eqref{1.7} respectively.
Then the following uniform asymptotics holds for every $\ep'\in(0,\ep)$:
$$
\sup_{\tau\in [-\ep',\ep']}\big|\lambda_{k,\tau}-\lfloor {(k+1)/2}\rfloor \big|=O(k^{-\infty})\quad\mbox{\rm as}\quad k\to\infty.
$$
\end{lemma}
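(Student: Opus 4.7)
The plan is to mimic Edward's argument from \cite[Theorem 1]{E} while promoting every estimate to a bound uniform in $\tau\in[-\ep',\ep']$. Since $D_{\alpha_\tau}\phi_{n,\tau}=n\phi_{n,\tau}$ and $\{\phi_{n,\tau}\}_{n\in\Z}$ is an orthonormal basis of $L^2(\S)$, the eigenvalues of $B_\tau$ counted with multiplicities are $\nu_{k,\tau}=\lfloor(k+1)/2\rfloor^2+1$. Writing $\mu_{k,\tau}=\lambda_{k,\tau}^2+1$ for the eigenvalues of $A_\tau$, it suffices to show $\sup_\tau|\mu_{k,\tau}-\nu_{k,\tau}|=O(k^{-\infty})$ as $k\to\infty$; taking square roots then yields the claim.

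The first technical step is to extract uniform smoothness of the perturbation. Because $\alpha:(-\ep,\ep)\to C^\infty(\S)$ is continuous and $[-\ep',\ep']$ is compact, the set $\{\alpha_\tau:\tau\in[-\ep',\ep']\}$ is relatively compact in $C^\infty(\S)$; hence every $C^N$-seminorm of $\alpha_\tau$ is uniformly bounded and $\alpha_\tau$ admits a uniform positive lower bound. From the explicit formula \eqref{2.1} it follows that $\sup_{\tau\in[-\ep',\ep']}\|K(\tau,\cdot,\cdot)\|_{C^N(\S\times\S)}<\infty$ for every $N$, so $\Delta_\tau$ is a smoothing operator with uniform Schwartz-kernel bounds.

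Next, I would extract rapid decay of the matrix entries $\langle\Delta_\tau\phi_{n,\tau},\phi_{m,\tau}\rangle$. Using the oscillatory form \eqref{2.2}, write $\phi_{n,\tau}(\theta)=(2\pi)^{-1/2}\alpha_\tau(\theta)^{-1/2}e^{in\psi_\tau(\theta)}$ with $\psi_\tau'=\alpha_\tau^{-1}$ uniformly bounded above and bounded below away from zero. Integrating by parts $N$ times in each of $\theta,\theta'$ in the double integral defining $\langle\Delta_\tau\phi_{n,\tau},\phi_{m,\tau}\rangle$, and feeding in the uniform kernel bounds from the previous step, gives
$$
\sup_{\tau\in[-\ep',\ep']}\bigl|\langle\Delta_\tau\phi_{n,\tau},\phi_{m,\tau}\rangle\bigr|=O\bigl((1+|n|+|m|)^{-N}\bigr)
$$
for every $N$. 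In particular $\|\Delta_\tau\phi_{n,\tau}\|_{L^2}=O(|n|^{-\infty})$ uniformly in $\tau$, so $A_\tau\phi_{n,\tau}=(n^2+1)\phi_{n,\tau}+r_{n,\tau}$ with $\|r_{n,\tau}\|_{L^2}=O(|n|^{-\infty})$.

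Finally, a min-max argument converts these approximate eigenpairs into a spectral comparison. Apply the variational characterization of $\mu_{k,\tau}$ to the $(2N+1)$-dimensional subspace $V_N=\mathrm{span}\{\phi_{n,\tau}:|n|\le N\}$; the growing gaps between distinct eigenvalues of $B_\tau$ (consecutive distinct eigenvalues are separated by roughly $2|n|$) combined with the rapidly decaying matrix entries above ensure that for each $|n|\le N$ the number of eigenvalues of $A_\tau$ lying within $O(N^{-\infty})$ of $n^2+1$ matches the multiplicity in $B_\tau$ (one for $n=0$, two for $n\neq0$ by pairing $\pm n$), giving $|\mu_{k,\tau}-\nu_{k,\tau}|=O(k^{-\infty})$ uniformly. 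The main obstacle is precisely this counting step: one needs the linear growth of spectral gaps in $B_\tau$ to dominate the smoothing remainder so that the approximate eigenvalues account for all true eigenvalues without duplication, while keeping every implicit constant uniform across $\tau\in[-\ep',\ep']$.
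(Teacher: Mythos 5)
Your setup agrees with the paper's: compare the eigenvalues $\mu_{k,\tau}=\lambda_{k,\tau}^2+1$ of $A_\tau$ with the eigenvalues $\nu_{k}=\lfloor(k+1)/2\rfloor^2+1$ of $B_\tau$, exploit that $\Delta_\tau=A_\tau-B_\tau$ is smoothing with kernel bounds uniform in $\tau$, and use compactness of $[-\ep',\ep']$ to get uniformity. But you then take a genuinely different (and longer) route — computing matrix entries $\langle\Delta_\tau\phi_{n,\tau},\phi_{m,\tau}\rangle$ and attempting an approximate-eigenvector / counting argument — and the crucial final step contains a gap that you flag but do not close.

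The gap is concrete. From $A_\tau\phi_{n,\tau}=(n^2+1)\phi_{n,\tau}+r_{n,\tau}$ with $\|r_{n,\tau}\|=O(|n|^{-\infty})$ you may conclude that $A_\tau$ has \emph{some} eigenvalue within $\|r_{n,\tau}\|$ of $n^2+1$, and (with a little more work on the two-dimensional almost-invariant subspace $\mathrm{span}\{\phi_{n,\tau},\phi_{-n,\tau}\}$) that the corresponding spectral projection has rank $\ge 2$. This is a lower bound on the eigenvalue counting function of $A_\tau$ near $n^2+1$. It does \emph{not} pin down the $k$-th eigenvalue $\mu_{k,\tau}$: for that you also need a matching upper bound on the counting function of $A_\tau$ (so that low-lying spectrum cannot shift the ordering and no extra eigenvalues appear between the clusters), uniformly in $\tau$. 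You do not supply this. Moreover, your invocation of the min-max on the finite-dimensional subspace $V_N=\mathrm{span}\{\phi_{n,\tau}:|n|\le N\}$ only produces \emph{upper} bounds on $\mu_{k,\tau}$ for $k\le 2N$; the complementary lower bounds require testing the codimension-$k$ formulation on a tail subspace, which is precisely the device the paper uses and which you never set up.

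The paper's proof sidesteps all of this. It uses the codimension-$k$ min-max for both operators, tests on the subspace $E_k$ spanned by the tail eigenvectors $\{u_l\}_{l\ge k}$ of $B_\tau$, and on that subspace estimates
$$
\big|\langle\Delta_\tau\phi,\phi\rangle\big|
=\big|\langle\Delta_\tau B_\tau^l\,B_\tau^{-l}\phi,\phi\rangle\big|
\le C_l\,\|B_\tau^{-l}\phi\|_{L^2(\S)}\le 4C_l\,k^{-2l},
$$
where $C_l=\sup_\tau\|\Delta_\tau B_\tau^l\|$ is finite by uniform kernel bounds. This gives $\lfloor(k+1)/2\rfloor^2-\lambda_{k,\tau}^2\le 4C_lk^{-2l}$ in one line, and transposing the roles of $A_\tau$ and $B_\tau$ gives the reverse inequality. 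No matrix-entry estimates or eigenvalue pairing are needed. If you wish to salvage your version, the missing upper bound on the counting function of $A_\tau$ (equivalently, a uniform Weyl law or a direct min-max argument) must be supplied — and at that point the two-sided min-max comparison is essentially unavoidable.
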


\begin{proof}
We recall the following min-max principle. For
$k\in \N$
$$
\lambda_{k,\tau}^2+1=\max_{{\rm codim}\, E_k=k}\ \min_{\phi\in E_k,\, \|\phi\|_{L^2(\S)}=1}\l A_\tau\phi,\phi\r,
$$
$$
\lfloor (k+1)/2\rfloor^2+1=\max_{{\rm codim}\, E_k=k}\ \min_{\phi\in E_k,\, \|\phi\|_{L^2(\S)}=1}\l B_\tau\phi,\phi\r.
$$
For
$\phi\in L^2(\S)$
$$
\l A_\tau\phi,\phi\r=\l B_\tau\phi,\phi\r+\l \Delta_\tau\phi,\phi\r.
$$
Therefore for any subspace
$E_k$
of codimension
$k$
$$
\lambda_{k,\tau}^2+1\ge  \min_{\phi\in E_k,\ \|\phi\|_{L^2(\S)}=1}\l B_\tau\phi,\phi\r+\min_{\phi\in E_k,\ \|\phi\|_{L^2(\S)}=1}\l \Delta_\tau\phi,\phi\r.
$$
Taking
$E_k$
the subspace of codimension
$k$
spanned by the eigenvectors
$\{u_l\}_{l\ge k}$
of
$B_\tau$,
we obtain
$$
\lambda_{k,\tau}^2- \lfloor {(k+1)/2}\rfloor^2 \ge \min_{\phi\in E_k,\ \|\phi\|_{L^2(\S)}=1}\l \Delta_\tau\phi,\phi\r.
$$
Since
$ \Delta_\tau$
is a smoothing operator,
$\Delta_\tau  B_\tau^l$
is a bounded operator for any
$l\in \N$
and its operator norm
is bounded  uniformly in
$\tau\in [-\ep',\ep']$
by some constant
$C_l$
since its Schwartz kernel is a continuous function on
$(-\ep\times \ep)\times \S\times \S$.
Hence for any
$\phi\in E_k$
satisfying
$\|\phi\|_{L^2(\S)}=1$,
$$
\big|\l \Delta_\tau \phi,\phi\r\big|=\big|\l \Delta_\tau  B_\tau^l B_\tau^{-l}\phi,\phi\r\big|
\le C_l\|B_\tau^{-l}\phi\|_{L^2(\S)}.
$$
Then we use that, for a unit vector
$\phi$
of the subspace
$E_k$
spanned by the eigenvectors
$\{u_l\}_{l\ge k}$
of
$B_\tau$,
we have
$\|B_\tau^{-l}\phi\|_{L^2(\S)}\le 4k^{-2l}$.

We have thus obtained the estimate
$$
-\lambda_{k,\tau}^2 + \lfloor {(k+1)/2}\rfloor^2 \le  4C_l\,\,k^{-2l}.
$$

We can transpose the roles played by
$A_\tau$
and
$B_\tau$
to obtain the estimate
$$
\lambda_{k,\tau}^2 - \lfloor {(k+1)/2}\rfloor^2 \le  4 C'_l\,\lambda_{k,\tau}^{-2l},
$$
where
$C'_l$
stands for a uniform bound of the operator norm of
$\Delta_\tau  A_\tau^l$.
\end{proof}

\subsection{Continuity of Steklov eigenvalues in $\tau$.}
Eigenvalues of the compact self-adjoint operator
$$
F_\tau=(\Lambda_{\alpha_\tau}+I)^{-1}:L^2({\S})\rightarrow L^2({\S})
$$
are
$(\lambda_{k,\tau}+1)^{-1}\le 1\ (k\in \N)$.
If one proved that the eigenvalues are continuous in
$\tau$,
then the Steklov eigenvalues
$\lambda_{k,\tau}$
would be also continuous in
$\tau$.
The proof relies on auxiliary lemmas presented below. The lemmas deal with
$C^l$-regularity with respect to
$\tau$
while only the case
$l=0$
is needed for continuity of eigenvalues. However the lemmas will be used  in a broader context in next sections.

Given a
$C^l$-deformation
$\alpha_\tau$
of a positive function
$a$,
the operator
$\Lambda_{\alpha_\tau}:C^\infty(\S)\rightarrow C^\infty(\S)$
depends
$C^l$-smoothly on
$\tau$.
In the case of
$l\ge 1$,
we  differentiate the equality
$\Lambda_{\alpha_\tau}=\alpha_\tau^{1/2}\Lambda\alpha_\tau^{1/2}$
with respect to
$\tau$
to obtain
\begin{equation}
{\pa \Lambda_{\alpha_\tau}\over \pa \tau}={1\over 2}{\pa\ln(\alpha_\tau)\over \pa\tau}\Lambda_{ \alpha_\tau}+{1\over 2}\Lambda_{ \alpha_\tau}{\pa\ln(\alpha_\tau)\over \pa\tau}.
                                             \label{2.3}
\end{equation}
Similar formulas hold for higher order derivatives
${\pa^k \Lambda_{\alpha_\tau}\over \pa \tau^k}$
for
$k\le l$.

Recall that the Sobolev space
$H^s(\S)$
can be defined for
$s\in \R$
as the completion of
$C^\infty(\S)$
with respect to the norm
$
\|f\|_{H^s(\S)}=\|(D^2+1)^{s/2} f\|_{L^2(\S)}.
$
Let
$\mathcal{L}\big(H^{s}(\S),H^{s'}(\S)\big)$
be the Banach space of all bounded operators
$H^{s}(\S)\rightarrow H^{s'}(\S)$
furnished with the operator norm.

\begin{lemma} \label{L2.2}
Let
$l$
be either a non-negative integer or
$\infty$.
Let
$\alpha\in C^l\big((-\ep,\ep), C^\infty(\S)\big)$
be a deformation of a positive function
$a\in C^\infty({\S})$
such that every
$\alpha_\tau$
is also a positive function. Assume
$a$
and
$\alpha_\tau$
to satisfy the normalization conditions \eqref{1.4} and \eqref{1.7} respectively.
Then

{\rm (1)}
For every real
$s$
and for every
$\tau\in(-\ep,\ep)$,
the operator
$\Lambda_{\alpha_\tau}$
belongs to the space
$\mathcal{L}\big(H^{s}(\S),H^{s-1}(\S)\big)$
and the function
$\tau\mapsto\Lambda_{\alpha_\tau}$
belongs to
$C^l\big((-\ep,\ep),\mathcal{L}(H^{s}(\S),H^{s-1}(\S))\big)$.

{\rm (2)}
Similarly, the operator-valued function
$F_\tau\in \mathcal{L}(H^s(\S),H^{s+1}(\S))$
is of the class
$C^l$
in
$\tau$.
\end{lemma}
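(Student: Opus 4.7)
The plan for part (1) is essentially formal. The operator $\Lambda$ is a Fourier multiplier of order one (its symbol on $\S$ is $n\mapsto|n|$), so $\Lambda\in\mathcal{L}\big(H^s(\S),H^{s-1}(\S)\big)$ for every real $s$. Any $b\in C^\infty(\S)$, regarded as a multiplication operator, is bounded on each $H^s(\S)$, and the dependence of this operator on $b$ is continuous (indeed linear) from the Fr\'echet space $C^\infty(\S)$ into $\mathcal{L}(H^s(\S))$. Under the hypothesis $\alpha\in C^l\big((-\ep,\ep),C^\infty(\S)\big)$ with $\alpha_\tau>0$, the map $\tau\mapsto\alpha_\tau^{1/2}$ belongs to $C^l\big((-\ep,\ep),C^\infty(\S)\big)$ by composition with the smooth function $x\mapsto\sqrt x$ on $(0,\infty)$. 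Composing these three $C^l$ factors in $\Lambda_{\alpha_\tau}=\alpha_\tau^{1/2}\,\Lambda\,\alpha_\tau^{1/2}$ yields a $C^l$-map $(-\ep,\ep)\to\mathcal{L}\big(H^s(\S),H^{s-1}(\S)\big)$.

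For part (2) I would first establish that $F_\tau\in\mathcal{L}\big(H^s(\S),H^{s+1}(\S)\big)$ for each fixed $\tau$ and every real $s$. The operator $\Lambda_{\alpha_\tau}+I$ is a self-adjoint, strictly positive elliptic pseudodifferential operator of order one on $\S$, with positive principal symbol $\alpha_\tau(\theta)|\xi|$. By standard elliptic theory on compact manifolds---for instance Seeley's theory of complex powers, or equivalently a parametrix construction combined with a duality argument---its inverse $F_\tau$ is itself a pseudodifferential operator of order $-1$ and hence maps $H^s(\S)$ boundedly into $H^{s+1}(\S)$. Because the full symbol of $\Lambda_{\alpha_\tau}$ depends continuously on $\tau$, the norm of $F_\tau$ stays bounded uniformly on any compact subinterval of $(-\ep,\ep)$.

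The $C^l$-regularity of $\tau\mapsto F_\tau$ is then extracted by a resolvent-identity bootstrap. For $l=0$ the identity
\[
F_{\tau'}-F_\tau=F_{\tau'}\big(\Lambda_{\alpha_\tau}-\Lambda_{\alpha_{\tau'}}\big)F_\tau,
\]
combined with the $C^0$-continuity of $\tau\mapsto\Lambda_{\alpha_\tau}\in\mathcal{L}\big(H^{s+1}(\S),H^s(\S)\big)$ from part (1) and the local uniform boundedness of $F_\tau$ just obtained, gives continuity of $\tau\mapsto F_\tau$ in $\mathcal{L}\big(H^s(\S),H^{s+1}(\S)\big)$. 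For $l\ge 1$ the same identity justifies the formula
\[
\frac{\pa F_\tau}{\pa\tau}=-F_\tau\,\frac{\pa\Lambda_{\alpha_\tau}}{\pa\tau}\,F_\tau,
\]
whose right-hand side, read as the composition $H^s\xrightarrow{F_\tau}H^{s+1}\xrightarrow{\pa_\tau\Lambda_{\alpha_\tau}}H^s\xrightarrow{F_\tau}H^{s+1}$, is a $C^{l-1}$ function of $\tau$ into $\mathcal{L}\big(H^s(\S),H^{s+1}(\S)\big)$. Iterating this procedure gains one derivative at a time and reaches the full $C^l$-regularity.

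The main technical obstacle is the order-$(-1)$ bound $F_\tau\in\mathcal{L}\big(H^s(\S),H^{s+1}(\S)\big)$ with local uniformity in $\tau$; this is the only step where genuine pseudodifferential machinery enters. The remainder---part (1), the $C^0$-continuity of $F_\tau$, and the bootstrap to arbitrary $C^l$---reduces to elementary identities and the continuity of composition in the Banach spaces $\mathcal{L}(H^s,H^{s'})$.
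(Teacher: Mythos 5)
Your proof is correct and follows essentially the same route as the paper: part (1) by factoring $\Lambda_{\alpha_\tau}=\alpha_\tau^{1/2}\Lambda\alpha_\tau^{1/2}$ into $C^l$-families of multiplication operators and the fixed order-one Fourier multiplier $\Lambda$, and part (2) by a resolvent-identity (equivalently, Neumann-series) bootstrap from part (1). The only real difference is that you make the base-case mapping property $F_\tau\in\mathcal{L}(H^s,H^{s+1})$ explicit via elliptic pseudodifferential theory, whereas the paper takes this standard fact for granted and concentrates on the $\tau$-regularity argument.
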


\begin{proof}
The operator
$ \Lambda_{\alpha_\tau}$
is a partial case of more general operators of the form
$$
f_1(\tau)\Lambda f_2(\tau),
$$
where
$f_i\in C^l\big((-\ep,\ep), C^\infty (\S)\big)\ (i=1,2)$.
The operators of multiplication  by
$f_i(\tau)$
can be extended to bounded operators in any
$H^s(\S)$
and these bounded operators are of class
$C^l$
in
$\tau$.
Note also that
$\Lambda$
is a bounded operator from
$H^s(\S)$
to
$H^{s-1}(\S)$.
Hence the family of bounded operators
$\Lambda_{\alpha_\tau}\in \mathcal{L}(H^s(\S),H^{s-1}(\S))$
is of class
$C^l$
in
$\tau$.
In the case
$l\ge 1$,
the first derivative with respect to
$\tau$
is expressed by the formula
$$
{\pa f_1(\tau)\over \pa \tau}\Lambda f_2(\tau)+f_1(\tau)\Lambda {\pa f_2(\tau)\over \pa \tau}.
$$

Now, we prove the second statement. The operator
$F_\tau$
is the inverse of the operator
$\Lambda_{\alpha_\tau}+I$
which belongs to
$\mathcal{L}(H^{s+1}(\S),H^s(\S))$
and is of the class
$C^l$
in
$\tau$.
Let us explain why
$F_\tau$
is a continuous operator-valued function (the case $l=0$). This is based on the inversion formula by Neumann series:
If
$\tau$
is close enough to
$\tau_0\in(-\ep,\ep)$,
then the norm of the operator
$(\Lambda_{\alpha_\tau}-\Lambda_{\alpha_{\tau_0}})F_{\tau_0}:H^s(\S)\rightarrow H^s(\S)$
is less than 1 and
$$
\Lambda_{\alpha_\tau}+I=\big((\Lambda_{\alpha_\tau}-\Lambda_{\alpha_{\tau_0}})F_{\tau_0}+I\big)(\Lambda_{\alpha_{\tau_0}}+I).
$$
This gives
$$
F_\tau=F_{\tau_0}\sum_{k=0}^{\infty}(-1)^k\big((\Lambda_{\alpha_\tau}-\Lambda_{\alpha_{\tau_0}})F_{\tau_0}\big)^k.
$$
This formula also provides that
$F_\tau$
is of the class
$C^l$
in
$\tau$
when
$l\ge 1$,
and its derivative is given by the formula
$$
{\pa F_\tau\over \pa \tau}=-F_{\tau}{\pa \Lambda_{\alpha_\tau}\over \pa\tau} F_{\tau}.
$$
\end{proof}

In the case of
$l=s=0$,
we apply the min-max principle to the compact self-adjoint operator
$F_\tau$
in
$L^2(\S)$
to obtain the following\footnote{We cannot get a better statement than continuity for the eigenvalues. Take for instance the family of bounded operators
$G_\tau=\left(\begin{array}{cc}
2+\cos(\tau)&0 \\
0&2+\sin(\tau)\\
\end{array}\right)$.
The eigenvalues
$\lambda_+(\tau)=\max(2+\cos(\tau),2+\sin(\tau))$
and
$\lambda_-(\tau)=\min(2+\cos(\tau),2+\sin(\tau))$
are not derivable at
$\tau={\pi/4}$
although the family $G_\tau$ is a smooth family of bounded operators in
$\tau$.
See also \cite[Chapter 2, Section 5, example 5.9, p. 115]{K}.}

\begin{corollary} \label{C2.1}
Under hypotheses of Lemma \ref{L2.2} with
$l=0$,
Steklov eigenvalues
$\lambda_{k,\tau}$
are continuous in
$\tau$.
\end{corollary}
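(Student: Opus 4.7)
The plan is to reduce continuity of the Steklov eigenvalues to the operator-norm continuity of the compact self-adjoint resolvent $F_\tau$, which was already set up in Lemma \ref{L2.2}. First I would apply Lemma \ref{L2.2}(2) with $l=0$ and $s=0$: this says that $\tau\mapsto F_\tau$ is continuous as a map into $\mathcal{L}\big(L^2(\S),H^1(\S)\big)$. Composing with the bounded inclusion $H^1(\S)\hookrightarrow L^2(\S)$, one obtains that $F_\tau$ depends continuously on $\tau$ in the norm of $\mathcal{L}\big(L^2(\S)\big)$.

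Next, since $\Lambda_{\alpha_\tau}$ is non-negative and self-adjoint on $L^2(\S)$, the operator $F_\tau=(\Lambda_{\alpha_\tau}+I)^{-1}$ is compact, self-adjoint, and non-negative, and its eigenvalues (ordered in non-increasing fashion and repeated according to multiplicity) are precisely $\mu_{k,\tau}=(\lambda_{k,\tau}+1)^{-1}\in(0,1]$. By the classical min-max principle applied to the compact self-adjoint operator $F_\tau$, one has
$$
\mu_{k,\tau}=\max_{\substack{V\subset L^2(\S)\\ \dim V=k+1}}\ \min_{\substack{\phi\in V\\ \|\phi\|_{L^2(\S)}=1}}\l F_\tau\phi,\phi\r.
$$
A standard consequence of this variational characterization is the Lipschitz-type bound
$$
|\mu_{k,\tau}-\mu_{k,\tau_0}|\le \|F_\tau-F_{\tau_0}\|_{\mathcal{L}(L^2(\S))},
$$
valid for every $k\in\N$ and every $\tau,\tau_0\in(-\ep,\ep)$. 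Combined with the operator-norm continuity of $F_\tau$ established in the previous step, this yields continuity of each function $\tau\mapsto\mu_{k,\tau}$.

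Finally, because $\mu_{k,\tau}>0$ and the map $x\mapsto x^{-1}-1$ is continuous on $(0,\infty)$, the relation $\lambda_{k,\tau}=\mu_{k,\tau}^{-1}-1$ transfers continuity to the Steklov eigenvalues themselves. The only step that requires genuine work is the operator-norm continuity of $F_\tau$, and that is already provided by Lemma \ref{L2.2}(2) via the Neumann series argument invoked in its proof; the remainder is a routine application of spectral perturbation theory for compact self-adjoint operators, and the footnote to the corollary statement already warns that one cannot hope for better regularity than mere continuity.
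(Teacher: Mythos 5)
Your proposal is correct and follows the same route the paper indicates: reduce to operator-norm continuity of $F_\tau$ via Lemma \ref{L2.2}(2), then apply the min-max principle for compact self-adjoint operators to transfer continuity to the eigenvalues $(\lambda_{k,\tau}+1)^{-1}$ and hence to $\lambda_{k,\tau}$. You have merely spelled out the details that the paper leaves implicit in its one-line remark preceding the corollary.
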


\subsection{Asymptotics of the Steklov eigenvectors}
We still consider a deformation
$\alpha_\tau$
of a positive function
$a\in C^\infty(\S)$
satisfying hypotheses of Lemma \ref{L2.2} with some
$l\in\N\cup\{\infty\}$.
For every
$\tau\in (-\ep,\ep)$,
let
$\{\Psi_{k,\tau}\}_{k\in \N}$
be an orthonormal basis of
$L^2({\S})$
consisting of Steklov eigenvectors for
$\Lambda_{\alpha_\tau}$
such that
$\Lambda_{\alpha_\tau}\Psi_{k,\tau}=\lambda_{k,\tau}\Psi_{k,\tau}$.

For  a positive integer
$k$,
we denote by
$P_{\lfloor {k+1\over 2}\rfloor,\tau}$
the orthogonal projection of
$L^2(\S)$
onto the two-dimensional eigenspace of
$|D_{\alpha_{\tau}}|$
spanned by the vectors
$\phi_{\pm \lfloor {k+1\over 2}\rfloor,\tau}$
that are defined by \eqref{2.2}.
For
$k=0$,
the operator
$P_{0,\tau}$
is the orthogonal projection onto the one-dimensional space spanned by
$\phi_{0,\tau}$.

\begin{lemma} \label{L2.3}
Under hypotheses of Lemma \ref{L2.2} with some
$l\in\N\cup\{\infty\}$,
the following uniform asymptotics holds for every
$\ep'\in(0,\ep)$
and for every
$s\in\R$:
\begin{equation}
\sup_{\tau\in [-\ep',\ep']}\|\Psi_{k,\tau}-P_{\lfloor {k+1\over 2}\rfloor,\tau}\Psi_{k,\tau}\|_{H^s(\S)}=O(k^{-\infty})\quad\mbox{as}\quad k\rightarrow\infty.
                                      \label{2.4}
\end{equation}
\end{lemma}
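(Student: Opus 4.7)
My plan is to expand $\Psi_{k,\tau}$ in the $L^2$-orthonormal basis $\{\phi_{n,\tau}\}_{n\in\Z}$ of eigenfunctions of $D_{\alpha_\tau}$ introduced in \eqref{2.2}, and to show that the expansion coefficients outside the two frequencies $\pm m$, with $m=\lfloor(k+1)/2\rfloor$, decay faster than any power of $k$, uniformly in $\tau\in[-\ep',\ep']$. Writing $\Psi_{k,\tau}=\sum_{n\in\Z} c_n\phi_{n,\tau}$ (with $c_n=c_n(k,\tau)$ suppressed in notation) and using the decomposition $\Lambda_{\alpha_\tau}^2=D_{\alpha_\tau}^2+\Delta_\tau$, the Steklov eigenvalue equation yields $(D_{\alpha_\tau}^2-\lambda_{k,\tau}^2)\Psi_{k,\tau}=-\Delta_\tau\Psi_{k,\tau}$; taking the $L^2$-inner product with $\phi_{n,\tau}$ produces the scalar identity
\begin{equation*}
(n^2-\lambda_{k,\tau}^2)\,c_n=-\l\Delta_\tau\Psi_{k,\tau},\phi_{n,\tau}\r.
\end{equation*}
Since $\Psi_{k,\tau}-P_{\lfloor(k+1)/2\rfloor,\tau}\Psi_{k,\tau}=\sum_{|n|\neq m}c_n\phi_{n,\tau}$, estimating \eqref{2.4} reduces to controlling this tail.

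The numerator is rapidly decreasing in $n$. Indeed, the Schwartz kernel of $\Delta_\tau$ given by \eqref{2.1} is smooth jointly in $(\tau,\theta,\theta')$, so $\Delta_\tau$ is bounded $H^{-N_2}(\S)\to H^{N_1}(\S)$ uniformly in $\tau\in[-\ep',\ep']$ for every $N_1,N_2\ge 0$. A direct differentiation of \eqref{2.2} combined with duality yields the uniform bound $\|\phi_{n,\tau}\|_{H^s(\S)}\le C_s(1+|n|)^s$, so the matrix entries satisfy $|\l\Delta_\tau\phi_{n',\tau},\phi_{n,\tau}\r|\le C_{N_1,N_2}(1+|n|)^{-N_1}(1+|n'|)^{-N_2}$ uniformly in $\tau$. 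Inserting this into the expansion of $\Delta_\tau\Psi_{k,\tau}$ in the basis $\{\phi_{n',\tau}\}$ and using $\|\Psi_{k,\tau}\|_{L^2}=1$ with Cauchy--Schwarz gives $|\l\Delta_\tau\Psi_{k,\tau},\phi_{n,\tau}\r|\le C_N(1+|n|)^{-N}$ for every $N$. For the denominator, Lemma \ref{L2.1} yields $\lambda_{k,\tau}=m+O(k^{-\infty})$ uniformly in $\tau$, whence for $|n|\neq m$ and $k$ large,
\begin{equation*}
|n^2-\lambda_{k,\tau}^2|\ge\tfrac12(1+||n|-m|)(1+|n|+m).
\end{equation*}
Combining the two bounds and splitting $\sum_{|n|\neq m}(1+n^2)^s|c_n|^2$ into the ranges $|n|\le m/2$, $m/2<|n|<2m$ with $|n|\neq m$, and $|n|>2m$, a direct case analysis shows each piece is $O(m^{-N})$ for every $N$: the inner range is controlled by $(1+|n|+m)^{-2}\lesssim m^{-2}$ together with summability of $(1+|n|)^{2s-2N}$ for $N$ large; the middle range uses $(1+|n|)^{2s-2N}\lesssim m^{2s-2N}$ and $\sum_{j\ge 1}j^{-2}<\infty$; the outer range uses $(1+|n|)^{-2N}$ directly. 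Therefore the total spectral tail is $O(k^{-\infty})$.

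To convert the spectral estimate into the $H^s$-norm estimate \eqref{2.4}, I would invoke the equivalence $\|u\|_{H^s(\S)}\sim\|B_\tau^{s/2}u\|_{L^2(\S)}$ uniform in $\tau\in[-\ep',\ep']$, which follows from the fact that $B_\tau=D_{\alpha_\tau}^2+I$ is a positive, self-adjoint, uniformly elliptic second-order operator with coefficients bounded in $C^\infty(\S)$ uniformly in $\tau$ on compact intervals (the case $s<0$ is handled by duality). Since $\{\phi_{n,\tau}\}$ diagonalizes $B_\tau$ with eigenvalues $1+n^2$, Parseval converts the tail into $\|B_\tau^{s/2}(\Psi_{k,\tau}-P_{\lfloor(k+1)/2\rfloor,\tau}\Psi_{k,\tau})\|_{L^2}^2=\sum_{|n|\neq m}(1+n^2)^s|c_n|^2$, and \eqref{2.4} follows from the previous step. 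The main obstacle I expect is the bookkeeping in the three-way case split of the spectral sum, along with carefully verifying the uniform-in-$\tau$ norm equivalence above; both of these, however, are essentially routine consequences of uniform ellipticity of $B_\tau$ and uniform smoothness of the deformation on compact intervals.
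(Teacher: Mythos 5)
Your route differs from the paper's: you estimate the Fourier coefficients $c_n$ of $\Psi_{k,\tau}$ directly via the scalar identity $(n^2-\lambda_{k,\tau}^2)c_n=-\l\Delta_\tau\Psi_{k,\tau},\phi_{n,\tau}\r$ with a single application of $\Delta_\tau=\Lambda_{\alpha_\tau}^2-D_{\alpha_\tau}^2$, whereas the paper works with the telescoping family $\Delta_{\tau,r}=\Lambda_{\alpha_\tau}^{2r}-D_{\alpha_\tau}^{2r}$ for \emph{arbitrary} $r$, deriving from $\Delta_{\tau,r}\Psi_{k,\tau}=\sum_p(\lambda_{k,\tau}^{2r}-p^{2r})P_{p,\tau}\Psi_{k,\tau}$ the bound $\sum_{p\neq m}\|P_{p,\tau}\Psi_{k,\tau}\|^2\lesssim k^{2-4r}$ and then letting $r\to\infty$. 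This is not a cosmetic difference, and it is precisely where your argument has a gap.

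The gap is in the inner frequency range $|n|\le m/2$, $m=\lfloor(k+1)/2\rfloor$. Your numerator bound $|\l\Delta_\tau\Psi_{k,\tau},\phi_{n,\tau}\r|\le C_N(1+|n|)^{-N}$ is obtained from $\|\Psi_{k,\tau}\|_{L^2}=1$ and therefore decays only in $|n|$, with no decay in $k$. For fixed small $|n|$ the only $k$-dependence comes from the denominator, and $|n^2-\lambda_{k,\tau}^2|\gtrsim m^2$ yields $|c_n|^2\lesssim m^{-4}(1+|n|)^{-2N}$. Summing over $|n|\le m/2$ with the weight $(1+n^2)^s$ gives $O(m^{-4})$, not $O(m^{-N'})$ for all $N'$ as you claim; so the asserted conclusion for the inner range does not follow. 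The middle and outer ranges are fine, but the inner range alone already defeats $O(k^{-\infty})$.

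The fix requires an additional idea that is not in your proposal. Either (i) replace $\Delta_\tau$ by the higher-order smoothing operators $\Delta_{\tau,r}=\Lambda_{\alpha_\tau}^{2r}-D_{\alpha_\tau}^{2r}$, which boosts the denominator to $|\lambda_{k,\tau}^{2r}-n^{2r}|\gtrsim k^{2r-1}$ for $|n|\neq m$ and lets you take $r$ as large as you please (this is what the paper does), or (ii) upgrade your numerator bound by exploiting that $\Psi_{k,\tau}$ is a \emph{high-frequency} eigenvector: from $(\Lambda_{\alpha_\tau}+I)^M\Psi_{k,\tau}=(\lambda_{k,\tau}+1)^M\Psi_{k,\tau}$ and the uniform boundedness of $(\Lambda_{\alpha_\tau}+I)^{M}:H^M\to L^2$ one gets $\|\Psi_{k,\tau}\|_{H^{-M}(\S)}\lesssim m^{-M}$ uniformly in $\tau$, and then $|\l\Delta_\tau\Psi_{k,\tau},\phi_{n,\tau}\r|\le\|\Psi_{k,\tau}\|_{H^{-M}}\|\Delta_\tau\phi_{n,\tau}\|_{H^M}\lesssim m^{-M}(1+|n|)^{-N}$ gives the missing $k$-decay. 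Without one of these, the argument does not close. The conversion to the $H^s$-norm via the uniform equivalence $\|u\|_{H^s}\sim\|B_\tau^{s/2}u\|_{L^2}$ is reasonable and is in the same spirit as the paper's inequality \eqref{2.10}, though you should also address the passage from integer $s$ to general real $s$ (the paper only needs integer $s$).
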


\begin{proof}
It suffices to prove the statement for
$s=m\in{\N}$.
We start with proving \eqref{2.4} for
$s=0$.

The equality
$\Psi_{k,\tau}=\sum_{p\in \N}P_{p,\tau}\Psi_{k,\tau}$
can be written in the form
$$
\Psi_{k,\tau}-P_{\lfloor {k+1\over 2}\rfloor,\tau}\Psi_{k,\tau}=\sum_{p\not=\lfloor {k+1\over 2}\rfloor}P_{p,\tau}\Psi_{k,\tau}.
$$
Since summands on the right-hand side are orthogonal to each other,
\begin{equation}
\|\Psi_{k,\tau}-P_{\lfloor {k+1\over 2}\rfloor,\tau}\Psi_{k,\tau}\|_{L^2(\S)}^2
=\sum_{p\not=\lfloor {k+1\over 2}\rfloor}\|P_{p,\tau}\Psi_{k,\tau}\|_{L^2(\S)}^2.
                                      \label{2.5}
\end{equation}

For any
$r\in \N$,
$\Delta_{\tau,r}=\Lambda_{\alpha_\tau}^{2r}- D_{\alpha_\tau}^{2r}$
is a smoothing operator whose operator norm is bounded uniformly in
$\tau$.
We rewrite the identity
$\Lambda_{\alpha_\tau}^{2r}\Psi_{k,\tau}=\lambda_{k,\tau}^{2r}\Psi_{k,\tau}$
as follows:
\begin{equation}
\Delta_{\tau,r}\Psi_{k,\tau}=\sum_{p\in \N}(\lambda_{k,\tau}^{2r}-p^{2r})P_{p,\tau}\Psi_{k,\tau}.
                                      \label{2.6}
\end{equation}
Since summands on the right-hand side are orthogonal to each other, \eqref{2.6} implies
$$
\|\Delta_{\tau,r}\Psi_{k,\tau}\|^2
=(\lambda_{k,\tau}^{2r}-\lfloor {(k+1)/2}\rfloor^{2r})^2\|P_{\lfloor{k+1\over 2}\rfloor,\tau}\Psi_{k,\tau}\|^2+\sum_{p\not=\lfloor {k+1\over 2}\rfloor}(\lambda_{k,\tau}^{2r}-p^{2r})^2\|P_{p,\tau}\Psi_{k,\tau}\|^2
$$
(all norms are
$L^2(\S)$-norms).
By Lemma \ref{L2.1}, the first term on the right hand side is bounded uniformly in
$\tau\in[-\ep',\varepsilon']$
for sufficiently large
$k$.
Hence there exist a rank
$N_1$
and constant
$C$
(independent of $k$) such that
\begin{equation}
\sup_{\tau\in[-\ep',\ep']} \sum_{p\not=\lfloor {k+1\over 2}\rfloor}(\lambda_{k,\tau}^{2r}-p^{2r})^2\|P_{p,\tau}\Psi_{k,\tau}\|^2\le C\quad
\mbox{for}\quad k\ge N_1.
                           \label{2.7}
\end{equation}

Let us represent the difference
$\lambda_{k,\tau}^{2r}-p^{2r}$
in the form
$$
\lambda_{k,\tau}^{2r}-p^{2r}=(\lambda_{k,\tau}-p)(\lambda_{k,\tau}^{2r-1}+\lambda_{k,\tau}^{2r-2}p+\dots+p^{2r-1}).
$$
From this
$$
(\lambda_{k,\tau}^{2r}-p^{2r})^2\ge(\lambda_{k,\tau}-p)^2\lambda_{k,\tau}^{4r-2}.
$$
By Lemma \ref{L2.1}, the second factor on the right-hand side is close to
$(k/2)^{4r-2}$
for sufficiently large
$k$
while the first factor is not less than
$1+O(k^{-1})$
for
$p\not=\lfloor (k+1)/2\rfloor$.
In other words, there exist a rank
$N_2$
and constant
$c>0$
(independent of $k$) such that
\begin{equation}
\inf_{\tau \in [-\ep',\ep'],\ p\not=\lfloor {k+1\over 2}\rfloor}(\lambda_{k,\tau}^{2r}-p^{2r})^2\ge c k^{4r-2}\quad
\mbox{for}\quad k\ge N_2.
                              \label{2.8}
\end{equation}

Combining \eqref{2.7} and \eqref{2.8}, we see that
$$
k^{4r-2}\sup_{\tau\in[-\ep',\ep']} \sum_{p\not=\lfloor {k+1\over 2}\rfloor}\|P_{p,\tau}\Psi_{k,\tau}\|^2\le c^{-1}C.
$$
Together with \eqref{2.5}, this implies
$$
k^{4r-2}\sup_{\tau\in[-\ep',\ep']} \|\Psi_{k,\tau}-P_{\lfloor {k+1\over 2}\rfloor,\tau}\Psi_{k,\tau}\|_{L^2({\S})}^2\le c^{-1}C_1.
$$
Since
$r$
is arbitrary, the statement is proved for
$s=0$.

Now, we prove \eqref{2.4} for
$s=m\in{\N}$.
Applying the operator
$D_{\alpha_\tau}^{2m}$
to both sides of \eqref{2.6}, we obtain
$$
D_{\alpha_\tau}^{2m}\Delta_{\tau,r}\Psi_{k,\tau}=\sum_{p\in \N}p^{2m}(\lambda_{k,\tau}^{2r}-p^{2r})P_{p,\tau}\Psi_{k,\tau}.
$$
Since summands on the right-hand side are orthogonal to each other, this implies
$$
\begin{aligned}
\|D_{\alpha_\tau}^{2m}\Delta_{\tau,r}\Psi_{k,\tau}\|^2
&=\lfloor (k+1)/2\rfloor^{4m}(\lambda_{k,\tau}^{2r}-\lfloor (k+1)/2\rfloor^{2r})^2\|P_{\lfloor{k+1\over 2}\rfloor,\tau}\Psi_{k,\tau}\|^2\\
&+\sum_{p\not=\lfloor {k+1\over 2}\rfloor}(\lambda_{k,\tau}^{2r}-p^{2r})^2\|p^{2m}P_{p,\tau}\Psi_{k,\tau}\|^2
\end{aligned}
$$
(all norms are
$L^2(\S)$-norms).
Again
$D_{\alpha_\tau}^{2m}\Delta_{\tau,r}$
is a smoothing operator whose operator norm is bounded uniformly in
$\tau$,
and
$\lfloor (k+1)/2\rfloor^{4m}(\lambda_{k,\tau}^{2r}-\lfloor (k+1)/2\rfloor^{2r})^2$
is also uniformly bounded in
$\tau$
for sufficiently large
$k$.
Applying the same reasoning as above we obtain that there exists a rank
$N$
and constant
$C$
such that
$$
\sup_{\tau\in[-\ep',\ep']}\sum_{p\not=\lfloor {k+1\over 2}\rfloor}\|p^{2m}P_{p,\tau}\Psi_{k,\tau}\|^2\le Ck^{2-4r}\quad
\mbox{for}\quad k\ge N,
$$
i.e.,
\begin{equation}
\sup_{\tau\in[-\ep',\ep']} \|D_{\alpha_\tau}^{2m}\big(\Psi_{k,\tau}-P_{\lfloor {k+1\over 2}\rfloor,\tau}\Psi_{k,\tau}\big)\|_{L^2({\S})}^2\le Ck^{2-4r}\quad
\mbox{for}\quad k\ge N.
                                      \label{2.9}
\end{equation}

We conclude the proof as follows. Given
$m\in \N$,
there exists a constant
$C_1$
such that
\begin{equation}
\|D^{2m}\phi\|_{L^2({\S})}^2\le C_1\big( \|D_{\alpha_\tau}^{2m}\phi\|_{L^2({\S})}^2+\|\phi\|_{L^2({\S})}^2\big)
                         \label{2.10}
\end{equation}
for any $\phi\in C^\infty(\S)$ and any $\tau\in [-\ep',\ep']$.
Combining estimates \eqref{2.9}, \eqref{2.10} and the statement for ``$m=0$", we obtain the existence of
$N_2\in \N$
and constant
$C_2$
such that
$$
\sup_{\tau\in[-\ep',\ep']} \|D^{2m}\big(\Psi_{k,\tau}-P_{\lfloor {k+1\over 2}\rfloor,\tau}\Psi_{k,\tau}\big)\|^2\le C_2k^{2-4r}\quad
\mbox{for}\quad k\ge N_2.
$$
Since
$r$
is arbitrary, the asymptotics \eqref{2.4} is proved for
$s=2m$.
\end{proof}

\section{The Steklov zeta function $\zeta_{\alpha_\tau}$ and its first and second variations with respect to $\tau$}

We again consider a deformation
$\alpha_\tau$
of a positive function
$a\in C^\infty(\S)$
satisfying hypotheses of Lemma \ref{L2.2} with some
$l\in\N\cup\{\infty\}$.
Hypotheses of all statements in this section coincide with that of Lemma \ref{L2.2}. The hypotheses are not written explicitly for brevity.

\subsection{The resolvent operator on the positive semi-axis}

For
$\tau \in (-\ep,\ep)$,
let
$P_{0,\tau}:L^2({\S})\rightarrow L^2({\S})$
be the orthogonal projection onto the one-dimensional subspace spanned by
$\phi_{0,\tau}=(2\pi)^{-{1/2}}\alpha_\tau^{-{1/2}}$.
Then
$P_{0,\tau}$
is the
$C^l$ $\tau$-smooth family of projectors. When
$l\ge 1$
its derivative with respect to
$\tau$
is expressed by
\begin{equation}
{\pa P_{0,\tau}\over \pa \tau}=-{1\over 2}{\pa\ln(\alpha_\tau)\over \pa\tau}P_{0,\tau}-{1\over 2}P_{0,\tau}{\pa\ln(\alpha_\tau)\over \pa\tau}.
                                     \label{3.1}
\end{equation}
In particular,
$P_{0,\tau}\in C^l((-\ep,\ep),\mathcal{L}(H^s(\S),H^{s+1}(\S))$
for any
$s\in \R$.

When
$l\ge 1$
the derivatives
${\pa \Lambda_{\alpha_{\tau}}^2\over \pa \tau}$
and
${\pa D_{\alpha_{\tau}}^2\over \pa \tau}$
are pseudodifferential operators of order 2 whose symbols are degenerate. Lemma \ref{L2.2}, together with the equality
$$
{\pa \Lambda_{\alpha_{\tau}}^2\over \pa \tau}=\Lambda_{\alpha_{\tau}}{\pa \Lambda_{\alpha_{\tau}}\over \pa \tau}
+{\pa \Lambda_{\alpha_{\tau}}\over \pa \tau}\Lambda_{\alpha_{\tau}},
$$
implies that
$\Lambda_{\alpha_{\tau}}^2\in  C^l((-\ep,\ep),\mathcal{L}(H^{s+2}(\S),H^s(\S))$
for any
$s\in \R$.
Similarly
$D_{\alpha_{\tau}}^2\in  C^l((-\ep,\ep),\mathcal{L}(H^{s+2}(\S),H^s(\S))$.

Observe that
$\Lambda_{\alpha_{\tau}}^2$
is a non-negative self-adjoint operator whose kernel coincides with the one-dimensional space spanned by
$\phi_{0,\tau}$.
Therefore, for every
$\lambda\ge0$,
the operator
$\Lambda_{\alpha_{\tau}}^2+P_{0,\tau}+\lambda$
has the bounded inverse.
We consider the family of positive bounded operators
\begin{equation}
G(\tau,\lambda)=(\Lambda_{\alpha_{\tau}}^2+P_{0,\tau}+\lambda)^{-1},\quad (\tau,\lambda)\in (-\ep,\ep)\times [0,\infty).
                                                          \label{3.2}
\end{equation}

We start with the following statement.

\begin{lemma} \label{L3.1}
For every
$s\in\R$
and every
$\tau\in(-\ep,\ep)$,
the operator
$G(\tau,\lambda)$
belongs to
$\mathcal{L}(H^s(\S),H^{s+2}(\S))$
and the function
$$
\tau\mapsto\Big(\lambda\mapsto G(\tau,\lambda)\Big)
$$
belongs to
$C^l\big( (-\ep,\ep), C^\infty([0,+\infty), \mathcal{L}(H^s(\S),H^{s+2}(\S)))\big)$.
Moreover,
\begin{equation}
{\pa G\over \pa \lambda}(\tau,\lambda)=-\big(G(\tau,\lambda)\big)^2,\quad
{\pa G\over \pa \tau}(\tau,\lambda)=-G(\tau,\lambda) {\pa (\Lambda_{\alpha_{\tau}}^2+P_{0,\tau})\over \pa \tau}G(\tau,\lambda)\quad \mbox{\rm if}\ l\ge1.
                                                          \label{3.3}
\end{equation}
In addition, for every $\ep'\in(0,\ep)$,
\begin{equation}
\sup_{(\tau,\lambda)\in[-\ep',\ep']\times [0,\infty)}\|G(\tau,\lambda)\|_{\mathcal{L}(H^s(\S),H^{s+2}(\S))}<\infty
                                  \label{3.4}
\end{equation}
and
\begin{eqnarray}
 \sup_{(\tau,\lambda)\in[-\ep',\ep']\times [0,\infty)}(1+\lambda)\Big\|{\pa^{m_1+m_2} G\over \pa \tau^{m_1}\pa\lambda^{m_2}}(\tau,\lambda)\Big\|_{\mathcal{L}(H^s(\S),H^{s+2m_2}(\S))}<\infty
                                    \label{3.5}
\end{eqnarray}
for all
$l\ge m_1\in\N$
and
$m_2\in \N$.
\end{lemma}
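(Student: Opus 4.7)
The plan is to treat $T_\tau := \Lambda_{\alpha_\tau}^2 + P_{0,\tau}$ as a uniformly elliptic self-adjoint pseudodifferential operator of order 2 and derive every property of $G(\tau,\lambda) = (T_\tau + \lambda)^{-1}$ from three ingredients: spectral positivity, elliptic regularity, and resolvent perturbation theory.

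First I would establish existence of $G(\tau,\lambda)$ and the mapping property $H^s \to H^{s+2}$. Since $\Lambda_{\alpha_\tau}^2$ is non-negative with one-dimensional kernel spanned by $\phi_{0,\tau}$ and $P_{0,\tau}$ is the orthogonal projection onto that kernel, $T_\tau \ge c_\tau I$ on $L^2(\S)$ for some $c_\tau > 0$, so $(T_\tau + \lambda)^{-1}$ exists on $L^2$ with $\|G(\tau,\lambda)\|_{L^2 \to L^2} \le (c_\tau + \lambda)^{-1}$. The principal symbol of $T_\tau$ is $\alpha_\tau(\theta)^2\, \xi^2$, bounded below uniformly for $\tau\in[-\ep',\ep']$; applying the standard elliptic regularity inequality to $u = G(\tau,\lambda) f$ with $T_\tau u = f - \lambda u$, then using the $L^2$ decay to control the $\lambda u$ term, yields the uniform bound \eqref{3.4}.

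Next, differentiating the identity $(T_\tau + \lambda) G = I$ gives the derivative formulas \eqref{3.3}; higher derivatives come out as explicit finite sums of alternating products of $G$'s and derivatives $\partial_\tau^j T_\tau$, which belong to the right spaces by Lemma \ref{L2.2}. The joint $C^l$-in-$\tau$ and $C^\infty$-in-$\lambda$ regularity I would obtain from the Neumann series
$$G(\tau,\lambda) = \big(I + G(\tau_0,\lambda)(T_\tau - T_{\tau_0})\big)^{-1} G(\tau_0,\lambda),$$
whose convergence is uniform in $\lambda\in[0,\infty)$ precisely because $G(\tau_0,\lambda)(T_\tau-T_{\tau_0}): H^s\to H^s$ has norm bounded by $C\,\|T_\tau - T_{\tau_0}\|_{\mathcal{L}(H^{s+2},H^s)}$, independently of $\lambda$, thanks to \eqref{3.4}. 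Term-by-term differentiation of this series delivers all stated derivatives.

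For the weighted uniform bound \eqref{3.5}, the key observation is that $(1+\lambda)\, G$ is uniformly bounded on $H^s$: from $(1+\lambda)G f = f + (I - T_\tau) G f$ and the just-established bound \eqref{3.4}, one reads off $\|(1+\lambda)G\|_{\mathcal{L}(H^s, H^s)} \le C$. Since $\partial_\lambda^{m_2} G = (-1)^{m_2} m_2!\, G^{m_2+1}$, I would split $(1+\lambda)\, G^{m_2+1} = G^{m_2} \cdot \bigl((1+\lambda)\, G\bigr)$: the last factor is bounded $H^s \to H^s$, while the prefix $G^{m_2}$ gains $2m_2$ derivatives via iteration of \eqref{3.4}. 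Mixed derivatives $\partial_\tau^{m_1}\partial_\lambda^{m_2} G$ are handled by the same interleaving device, since each insertion of $\partial_\tau^j T_\tau$ costs exactly the two derivatives gained by one $G$-factor, so the overall balance is preserved.

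The main obstacle will be uniformity of all estimates in $\lambda \in [0,\infty)$. The $L^2$ spectral bound $(c_\tau + \lambda)^{-1}$ is what promotes the local-in-$\lambda$ smoothness of $G$ to a bona fide element of the Fr\'echet space $C^\infty([0,\infty), \mathcal{L}(H^s, H^{s+2m_2}))$, makes the Neumann perturbation argument work on the full ray rather than only on bounded $\lambda$-intervals, and supplies the $(1+\lambda)^{-1}$ factor decisive for \eqref{3.5}.
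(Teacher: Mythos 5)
Your overall scheme mirrors the paper's: differentiating $(T_\tau+\lambda)G=I$ to get \eqref{3.3}, a Neumann series around $\tau_0$ for the $C^l/C^\infty$ joint regularity, and the telescoping $G^{m_2}\cdot((1+\lambda)G)$ decomposition for \eqref{3.5}. Those pieces are fine. The gap is in your derivation of \eqref{3.4} itself, and it is not a detail that can be waved through.

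The ``standard elliptic regularity inequality'' applied to $u=G(\tau,\lambda)f$ with $T_\tau u=f-\lambda u$ gives $\|u\|_{H^{s+2}}\lesssim\|f\|_{H^s}+\lambda\|u\|_{H^s}+\|u\|_{H^{-N}}$. The term $\lambda\|u\|_{H^s}$ cannot be absorbed by the $L^2$ spectral decay $\|u\|_{L^2}\le(c_\tau+\lambda)^{-1}\|f\|_{L^2}$ once $s\ne 0$: for $s>0$ that decay controls the wrong norm, while for $s<0$ you get $\lambda\|u\|_{H^s}\lesssim\lambda\|u\|_{L^2}\lesssim\|f\|_{L^2}$, which is not dominated by $\|f\|_{H^s}$. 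Bootstrapping in $s$ does not close this either, because at each step the $\lambda\|u\|_{H^s}$ term requires the $(1+\lambda)$-weighted $H^s$-bound at the \emph{same} level $s$, which is exactly what you defer to \eqref{3.4}. In other words, you derive $(1+\lambda)\|G\|_{\mathcal{L}(H^s,H^s)}\le C$ from \eqref{3.4}, but \eqref{3.4} for general real $s$ already requires that weighted bound, so the argument is circular. What is missing is a \emph{parameter-elliptic} estimate, uniform along the full ray $\lambda\in[0,\infty)$; this is more than ordinary elliptic regularity plus $L^2$ positivity. The paper supplies it via the comparison \eqref{3.6} between $(D^2+1)^s$ and $(\Lambda_{\alpha_\tau}^2+P_{0,\tau})^s$ (integer $s$ by $\tau$-continuity, then interpolation) together with the exact spectral identity $\|(\Lambda_{\alpha_\tau}^2+P_{0,\tau})^{s+1}G(\tau,\lambda)(\Lambda_{\alpha_\tau}^2+P_{0,\tau})^{-s}\|_{\mathcal{L}(L^2)}=\sup_{k\ge1}\lambda_{k,\tau}^2/(\lambda_{k,\tau}^2+\lambda)=1$, and similarly the $\sup_k(1+\lambda)/(\lambda_{k,\tau}^2+\lambda)\le c_0^{-1}$ bound for \eqref{3.5}. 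You need some equivalent of this diagonalization-and-conjugation step; without it \eqref{3.4} and hence everything downstream is unproved.
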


\begin{proof}

The proof of the smoothness and of identities \eqref{3.3} repeats essentially the arguments used in the proof of Lemma \ref{L2.2} (here we deal with the operator
$\Lambda_{\alpha_\tau}^2+P_{0,\tau}\in \mathcal{L}(H^{s+2}(\S),H^s(\S))$
instead of the operator
$\Lambda_{\alpha_\tau}\in \mathcal{L}(H^{s+1}(\S),H^s(\S))$
in Lemma \ref{L2.2}).

It remains to prove \eqref{3.4}--\eqref{3.5}. We start with the following estimate: For every
$\ep'\in(0,\ep)$
and every real
$s$,
\begin{equation}
\sup_{\tau\in[-\ep',\ep']}\|(D^2+1)^s(\Lambda_{\alpha_{\tau}}^2+P_{0,\tau})^{-s}\|_{\mathcal{L}(L^2(\S))}
=\sup_{\tau\in[-\ep',\ep']}\|(\Lambda_{\alpha_{\tau}}^2+P_{0,\tau})^{-s}(D^2+1)^{s}\|_{\mathcal{L}(L^2(\S))}
<\infty.
                                                        \label{3.6}
\end{equation}
The estimate \eqref{3.6} follows from the
$\tau$-continuity of the family
$(\Lambda_{\alpha_{\tau}}^2+P_{0,\tau})^s\in \mathcal{L}(H^{2s}(\S), L^2(\S))$
when
$s$
is an integer. Then it is obtained for any real
$s$
by interpolation theory. (The
$\tau$-continuity is granted when
$s$
is an integer by composing
$(\Lambda_{\alpha_{\tau}}^2+P_{0,\tau})^{\pm 1}$.)

We use the identity
$$
(\Lambda_{\alpha_{\tau}}^2+P_{0,\tau})^{s+1} G(\tau,\lambda) (\Lambda_{\alpha_{\tau}}^2+P_{0,\tau})^{-s}=(\Lambda_{\alpha_{\tau}}^2+P_{0,\tau}) G(\tau,\lambda)
$$
to obtain
$$
\begin{aligned}
\|(\Lambda_{\alpha_{\tau}}^2+P_{0,\tau})^{s+1} G(\tau,\lambda) (\Lambda_{\alpha_{\tau}}^2+P_{0,\tau})^{-s}\|_{\mathcal{L}(L^2(\S))}
&=\|(\Lambda_{\alpha_{\tau}}^2+P_{0,\tau}) G(\tau,\lambda)\|_{\mathcal{L}(L^2(\S))}\\
&=\sup_{k\ge1}{\lambda_{k,\tau}^2\over \lambda_{k,\tau}^2+\lambda}= 1.
\end{aligned}
$$
Thus,
\begin{equation}
\sup_{(\tau,\lambda)\in(-\ep,\ep)\times [0,\infty)}\|(\Lambda_{\alpha_{\tau}}^2+P_{0,\tau})^{s+1} G(\tau,\lambda) (\Lambda_{\alpha_{\tau}}^2+P_{0,\tau})^{-s}\|_{\mathcal{L}(L^2(\S))}= 1.
                                                        \label{3.7}
\end{equation}

By the definition of
$H^s({\S})$-norms,
$$
\|G(\tau,\lambda)\|_{\mathcal{L}(H^s(\S),H^{s+2}(\S))}=\|(D^2+1)^{s/2+1}G(\tau,\lambda)(D^2+1)^{-s/2}\|_{{\mathcal L}(L^2({\S}))}.
$$
This can be written in the form
$$
\begin{aligned}
\|G(\tau,\lambda)\|_{\mathcal{L}(H^s(\S),H^{s+2}(\S))}
=\|&(D^2+1)^{s/2+1}(\Lambda_{\alpha_\tau}^2+P_{0,\tau})^{-s/2-1}\times\\
&\times(\Lambda_{\alpha_\tau}^2+P_{0,\tau})^{s/2+1}G(\tau,\lambda)
(\Lambda_{\alpha_\tau}^2+P_{0,\tau})^{-s/2}\times\\
&\times(\Lambda_{\alpha_\tau}^2+\!P_{0,\tau})^{s/2}(D^2+1)^{-s/2}\|_{{\mathcal L}(L^2({\S}))}
\end{aligned}
$$
and implies the inequality
$$
\begin{aligned}
\|G(\tau,\lambda)\|_{\mathcal{L}(H^s(\S),H^{s+2}(\S))}&\le
\|(D^2+1)^{s/2+1}(\Lambda_{\alpha_\tau}^2+P_{0,\tau})^{-s/2-1}\|_{{\mathcal L}(L^2({\S}))}\times\\
&\times\|(\Lambda_{\alpha_\tau}^2+P_{0,\tau})^{s/2+1}G(\tau,\lambda)
(\Lambda_{\alpha_\tau}^2+P_{0,\tau})^{-s/2}\|_{{\mathcal L}(L^2({\S}))}\times\\
&\times\|(\Lambda_{\alpha_\tau}^2+P_{0,\tau})^{s/2}(D^2+1)^{-s/2}\|_{{\mathcal L}(L^2({\S}))}.
\end{aligned}
$$
By \eqref{3.6}--\eqref{3.7}, three factors on the right-hand side of the inequality are bounded uniformly in
$(\tau,\lambda)\in[-\ep',\ep']\times [0,\infty)$.
This proves \eqref{3.4}.

Since the first positive Steklov eigenvalue $\lambda_{1,\tau}$ depends continuously on 
$\tau$,
there exists a positive constant 
$c_0<1$ 
such that
$$
\inf_{\tau\in [-\ep',\ep']}\lambda_{1,\tau}\ge c_0.
$$
Obviously,
$$
(1+\lambda)\|(\Lambda_{\alpha_{\tau}}^2+P_{0,\tau})^s G(\tau,\lambda) (\Lambda_{\alpha_{\tau}}^2+P_{0,\tau})^{-s}\|_{\mathcal{L}(L^2(\S))}\\
=\sup_{k\ge 1}\,\frac{1+\lambda}{\lambda_{k,\tau}^2+\lambda}
\le \frac{1+\lambda}{\lambda_{1,\tau}^2+\lambda}\le\frac{1}{c_0}.
$$
We obtain
\begin{equation}
\sup_{(\tau,\lambda)\in[-\ep',\ep']\times [0,\infty)}(1+\lambda)\|(\Lambda_{\alpha_{\tau}}^2+P_{0,\tau})^s G(\tau,\lambda) (\Lambda_{\alpha_{\tau}}^2+P_{0,\tau})^{-s}\|_{\mathcal{L}(L^2(\S))}\le c_0^{-1}.
                                      \label{3.8}
\end{equation}
In the same way as \eqref{3.4} was derived from \eqref{3.7}, we derive from \eqref{3.8} with the help of \eqref{3.6}
$$
\sup_{(\tau,\lambda)\in[-\ep',\ep']\times [0,\infty)}(1+\lambda)\|G(\tau,\lambda)\|_{\mathcal{L}(H^s(\S),H^s(\S))}<\infty.
$$
We have thus proved \eqref{3.5} in the case of
$m_1=m_2=0$.

For every integer
$0\le k\le l$, the estimates
\begin{equation}
\sup_{(\tau,\lambda)\in[-\ep',\ep']\times[0,\infty)}\left\|{\pa^k (\Lambda_{\alpha_{\tau}}^2+P_{0,\tau})\over \pa \tau^k}G(\tau,\lambda)\right\|_{\mathcal{L}(H^s(\S),H^s(\S))}<\infty,
                                      \label{3.9}
\end{equation}
\begin{equation}
\sup_{(\tau,\lambda)\in[-\ep',\ep']\times[0,\infty)}\left\| G(\tau,\lambda) {\pa^k (\Lambda_{\alpha_{\tau}}^2+P_{0,\tau})\over \pa \tau^k}\right\|_{\mathcal{L}(H^{-s}(\S),H^{-s}(\S))}<\infty
                                      \label{3.10}
\end{equation}
follow from \eqref{3.4} taking the
$C^l$ $\tau$-smoothness of
$(\Lambda_{\alpha_{\tau}}^2+P_{0,\tau})\in \mathcal{L}(H^{s+2}(\S),H^s(\S))$
into account.

Differentiating formulas \eqref{3.3}, we obtain recurrent relations that express
${\pa^{m_1+m_2} G\over \pa \tau^{m_1}\pa\lambda^{m_2}}$
through lower order derivatives
$$
{\pa^{m'_1+m'_2} G\over \pa \tau^{m'_1}\pa\lambda^{m'_2}}\ (m'_1+m'_2<m_1+m_2,\ m_1'<m_1)\quad\mbox{and}\quad {\pa^k(\Lambda_{\alpha_{\tau}}^2+P_{0,\tau})\over \pa \tau^k}\ (k\le m_1).
$$
With the help of \eqref{3.9}--\eqref{3.10}, the recurrent relations imply the validity of \eqref{3.5} inductively in
$m_1+m_2$.
\end{proof}

The family of positive bounded operators
\begin{equation}
G_0(\tau,\lambda)=(D_{\alpha_{\tau}}^2+P_{0,\tau}+\lambda)^{-1}\quad \big(\tau\in (-\ep,\ep),\ \lambda\in [0,\infty)\big)
                                      \label{3.11}
\end{equation}
also satisfies the corresponding statements of  Lemma \ref{L3.1} with appropriate changes.

For any
$(\tau,\lambda)\in (-\ep,\ep)\times [0,+\infty)$
the operator
$(G-G_0)(\tau,\lambda)$
is smoothing as well as all its derivatives.
More precisely we have the following property.

\begin{lemma}     \label{L3.2}
For every
$s\in \R$
and every
$m \in\N$,
the function
$
\tau\mapsto\big(\lambda\mapsto(G-G_0)(\tau,\lambda)\big)
$
belongs to
$
C^l\big((-\ep,\ep), C^\infty([0,+\infty),\mathcal{L}(H^s(\S),H^{s+m}(\S)))\big).
$

For every
$\ep'\in (0,\ep)$
and every
$(m_1,m_2,j_1,j_2)\in \N^4$ such that
$m_1\le l$,
\begin{equation}
\sup_{(\tau,\lambda)\in [-\ep',\ep']\times (0,+\infty)}(1+\lambda)^2\left\|D^{j_1}{\pa^{m_1+m_2}(G-G_0)\over \pa \tau^{m_1}\pa \lambda^{m_2}}(\tau,\lambda)D^{j_2}\right\|_{\mathcal{L}(L^2(\S))}<\infty.
                                               \label{3.12}
\end{equation}

\end{lemma}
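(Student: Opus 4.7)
The plan rests on the second resolvent identity. Since
$$
G_0^{-1}-G^{-1}=\bigl(D_{\alpha_\tau}^2+P_{0,\tau}+\lambda\bigr)-\bigl(\Lambda_{\alpha_\tau}^2+P_{0,\tau}+\lambda\bigr)=-\Delta_\tau
$$
(the $P_{0,\tau}$ and $\lambda$ contributions cancel), I obtain the key identity
$$
G(\tau,\lambda)-G_0(\tau,\lambda)=-G_0(\tau,\lambda)\,\Delta_\tau\,G(\tau,\lambda),
$$
which transfers all the smoothing and $\lambda$-decay of the difference onto the central factor $\Delta_\tau$. Recall from \eqref{2.1} that $\Delta_\tau$ is a smoothing operator whose Schwartz kernel lies in $C^l\big((-\ep,\ep),C^\infty(\S\times\S)\big)$, is $\lambda$-independent, and whose $\tau$-derivatives $\partial_\tau^k\Delta_\tau$ (for $0\le k\le l$) map $H^s(\S)\to H^t(\S)$ for every pair of real exponents $s,t$, with operator norm uniformly bounded on $[-\ep',\ep']$.

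The first assertion will then follow from Leibniz together with Lemma~\ref{L3.1} (applied to $G$ and, by the analogous statement, to $G_0$): differentiating $-G_0\,\Delta_\tau\,G$ in $\tau$ (to order $\le l$) and in $\lambda$ produces a finite sum of products, each of class $C^l$ in $\tau$ and $C^\infty$ in $\lambda$ with values in $\mathcal L\big(H^s(\S),H^{s+m}(\S)\big)$ for any prescribed $m$, because the smoothing of every $\partial_\tau^k\Delta_\tau$ offsets any Sobolev loss coming from the resolvent factors on either side.

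For the quantitative estimate \eqref{3.12} I will expand $\partial_\tau^{m_1}\partial_\lambda^{m_2}(-G_0\Delta_\tau G)$ by Leibniz. Each $\lambda$-derivative falls on $G$ or $G_0$ and, by \eqref{3.5}, brings an extra factor $(1+\lambda)^{-1}$ in the appropriate $\mathcal L(H^s,H^{s+2m_2'})$-norm. Each $\tau$-derivative acting on a resolvent $G$ (or $G_0$) produces, via \eqref{3.3}, a chain of the form $G\cdot\partial_\tau^p\bigl(\Lambda_{\alpha_\tau}^2+P_{0,\tau}\bigr)\cdot G\cdots G$, whose middle factors are second-order operators; in $\mathcal L(H^s,H^s)$ each such chain still contributes only a single $(1+\lambda)^{-1}$ from the outermost resolvent, because the inner resolvents spend their 2-derivative gain compensating the 2-derivative loss of the intermediate second-order factors (this is precisely the role of the strong bound \eqref{3.4}). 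Keeping the two outermost resolvents, one from the $G_0$-block and one from the $G$-block, yields the advertised $(1+\lambda)^{-2}$ decay. Finally, the flanking derivatives $D^{j_1}$ on the left and $D^{j_2}$ on the right are absorbed by smoothing: the full product lies in $\mathcal L\big(H^{-j_2}(\S),H^{j_1}(\S)\big)$, so that $D^{j_1}(\cdots)D^{j_2}\in\mathcal L(L^2(\S))$ uniformly on $[-\ep',\ep']\times[0,\infty)$.

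The main technical obstacle will be the bookkeeping in the Leibniz expansion: distributing the $\tau$-derivatives across the three factors $G_0,\Delta_\tau,G$ and tracking Sobolev exponents so that exactly two resolvents remain ``free'' to supply the $(1+\lambda)^{-2}$ factor while all other operator norms (including those involving $\partial_\tau^p(\Lambda_{\alpha_\tau}^2+P_{0,\tau})$ and $\partial_\tau^k\Delta_\tau$) are handled through \eqref{3.4}--\eqref{3.5} and the uniform smoothing bounds on $\partial_\tau^k\Delta_\tau$. Once this accounting is settled, each term in the expansion collapses to a direct product of three controlled norms, and the lemma follows.
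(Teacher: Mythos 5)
Your proposal is correct and follows essentially the same route as the paper: the paper's proof hinges on the identity $(G-G_0)(\tau,\lambda)=-G(\tau,\lambda)(\Lambda_{\alpha_\tau}^2-D_{\alpha_\tau}^2)G_0(\tau,\lambda)$ (your $-G_0\Delta_\tau G$ is the same second resolvent identity with the factors in the other order, equally valid), combined with the observation that $\Lambda_{\alpha_\tau}^2-D_{\alpha_\tau}^2$ is smoothing with kernel in $C^l((-\ep,\ep),C^\infty(\S\times\S))$ and with the bounds \eqref{3.4}--\eqref{3.5} for $G$ and their analogs for $G_0$. The Leibniz bookkeeping you describe is exactly the mechanism the paper summarizes in one line when asserting that \eqref{3.12} follows from \eqref{3.13}, \eqref{3.14}, \eqref{3.5} and its $G_0$-analog.
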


\begin{proof}
We have the following identity:
\begin{equation}
(G-G_0)(\tau,\lambda)=-G(\tau,\lambda)(\Lambda_{\alpha_\tau}^2-D_{\alpha_\tau}^2)G_0(\tau,\lambda).
                                \label{3.13}
\end{equation}
We know that
$\Lambda_{\alpha_\tau}^2-D_{\alpha_\tau}^2$
is a smoothing operator with the Schwartz kernel belonging to
$C^l((-\ep,\ep), C^\infty(\S\times \S))$,
see \eqref{2.1}. Therefore
\begin{equation}
\Lambda_{\alpha_\tau}^2-D_{\alpha_\tau}^2\in C^l((-\ep,\ep),\mathcal{L}(H^{s'}(\S),H^{s'+m'}(\S)))
                                   \label{3.14}
\end{equation}
for any $s'\in\R$ and any $m'\in \N$.

Let $s\in \R$ and $m\in \N$. From Lemma \ref{L3.1} and its analog for $G_0$, we know that
$$
G_0\in C^l((-\ep,\ep),C^\infty([0,+\infty),\mathcal{L}(H^s(\S),H^{s+2}(\S)))),
$$
$$
G\in C^l((-\ep,\ep),C^\infty([0,+\infty),\mathcal{L}(H^{s+m-2}(\S),H^{s+m}(\S)))).
$$
The first statement of the lemma follows now from \eqref{3.13} and \eqref{3.14} for
$(s',m')=(s+2,m-4)$.

Inequality \eqref{3.12} also follows from \eqref{3.13}, \eqref{3.14}, \eqref{3.5} and the analog of \eqref{3.5} for
$G_0$.
\end{proof}

\subsection{Complex powers $(\Lambda_{\alpha_\tau}+P_{0,\tau})^{-z}$ for $\Re z\in (0,2)$}

We use the following definition of complex powers of a positive self-adjoint  operator
$A:L^2(\S)\rightarrow L^2(\S)$
with a discrete eigenvalue spectrum (see for example \cite{Kom}):
If
$\{\varphi_k\}_{k\in \N}$
is an orthonormal basis of
$L^2(\S)$
consisting of eigenvectors of
$A$
with associated eigenvalues
$\lambda_k>0$,
then
$$
A^{z}f=\sum_{k\in\N}\lambda_k^z\l f,\varphi_k \r\varphi_k\quad\mbox{for}\ z\in\C\ \mbox{and}\ f\in C^\infty(\S),
$$
where
$\lambda_k^z=e^{z\ln(\lambda_k)}$
and
$\ln(\lambda_k)\in\R$.
The series converges at least for
$\Re z\le0$.

By Lemma \ref{L3.1} and  \eqref{3.5} for $k=0$, we can consider the
$(\tau,z)$-continuous family of bounded positive operators
$$
\int_0^{+\infty}\lambda^{-z}G(\tau,\lambda) \,d\lambda,\quad
\int_0^{+\infty}\lambda^{-z}G_0(\tau,\lambda) \,d\lambda
$$
for any $z\in \C$ satisfying $0<\Re z<1$. One can use a basis of eigenvectors of $\Lambda_{\alpha_{\tau}}$ to show that
\begin{equation}
(\Lambda_{\alpha_{\tau}}+P_{0,\tau})^{-2z}=\gamma(z)\int_0^{\infty}\lambda^{-z}G(\tau,\lambda)\,d\lambda\quad\mbox{for}\quad0<\Re z<1,
                             \label{3.15}
\end{equation}
where
\begin{equation}
\gamma(z)=\Big(\int_0^{\infty}\lambda^{-z} (1+\lambda)^{-1}\,d\lambda\Big)^{-1}
=\frac{\sin\pi z}{\pi}\quad\mbox{for}\quad0<\Re z<1.
                                   \label{3.16}
\end{equation}
The second equality in \eqref{3.16} follows from Euler's integral. Indeed, changing the integration variable as 
$x=(1+\lambda)^{-1}$, one easily derives
$$
\int_0^{\infty}\lambda^{-z} (1+\lambda)^{-1}\,d\lambda
=\int_0^1x^{z-1}(1-x)^{-z}\,dx=B(z,1-z)=\frac{\pi}{\sin\pi z},
$$
where
$B$
is Euler's Beta-function.

Similarly
\begin{equation}
(|D_{\alpha_{\tau}}|+P_{0,\tau})^{-2z}=\gamma(z)\int_0^{\infty}\lambda^{-z}G_0(\tau,\lambda) \,d\lambda\quad\mbox{for}\quad0<\Re z<1.
                                   \label{3.17}
\end{equation}
With the help of Lemma \ref{L3.1}, we derive the following results.

\begin{lemma}   \label{L3.3}
The family of operators
$(\Lambda_{\alpha_{\tau}}+P_{0,\tau})^{-2z}$
belongs to
$$
C^l\big((-\ep,\ep),C^\infty(\{z\in \C\mid \Re z\in (0,1)\},\mathcal{L}(L^2(\S)))\big)
$$
and its first derivative with respect to
$z$
is given by
$$
{\pa\over \pa z}(\Lambda_{\alpha_{\tau}}+P_{0,\tau})^{-2z}=(\Lambda_{\alpha_{\tau}}+P_{0,\tau})^{-2z}{d\over dz}\ln(\gamma(z))-\gamma(z)\int_0^{\infty}\lambda^{-z}\ln(\lambda)\,G(\tau,\lambda) \,d\lambda.
$$
In the case of
$l\ge 1$,
the first derivatives with respect to
$\tau$
is given by
\begin{equation}
{\pa\over \pa\tau}(\Lambda_{\alpha_{\tau}}+P_{0,\tau})^{-2z}=-\gamma(z)\int_0^{\infty}\lambda^{-z}G(\tau,\lambda) {\pa(\Lambda_{\alpha_{\tau}}^2+P_{0,\tau})\over \pa \tau}G(\tau,\lambda)\,d\lambda.
                              \label{3.18}
\end{equation}

Similarly, the family  of bounded operators
$(|D_{\alpha_{\tau}}|+P_{0,\tau})^{-2z}$ belongs to
$$
C^l\big((-\ep,\ep),C^\infty(\{z\in \C\mid \Re z\in (0,1)\},\mathcal{L}(L^2(\S)))\big)
$$
and its first derivatives with respect to
$z$
is given by
$$
{\pa\over \pa z}(|D_{\alpha_{\tau}}|+P_{0,\tau})^{-2z}=(|D_{\alpha_{\tau}}|+P_{0,\tau})^{-2z}{d\over dz}\ln(\gamma(z))-\gamma(z)\int_0^{\infty}\lambda^{-z}\ln(\lambda)\,G_0(\tau,\lambda) \,d\lambda.
$$
In the case of
$l\ge 1$,
the first derivatives with respect to
$\tau$
is given by
\begin{equation}
{\pa\over \pa\tau}(|D_{\alpha_{\tau}}|+P_{0,\tau})^{-2z}=-\gamma(z)\int_0^{\infty}\lambda^{-z}G_0(\tau,\lambda) {\pa( D_{\alpha_{\tau}}^2+P_{0,\tau})\over \pa \tau}G_0(\tau,\lambda)\,d\lambda.
                            \label{3.19}
\end{equation}
\end{lemma}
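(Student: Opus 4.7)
The plan is to establish both the regularity and the derivative formulas by differentiating under the integral sign in the representations \eqref{3.15} and \eqref{3.17}, using the uniform bounds \eqref{3.5} of Lemma~\ref{L3.1} and its analog for $G_0$ to justify the interchange. First I would verify absolute convergence of the integral defining $(\Lambda_{\alpha_\tau}+P_{0,\tau})^{-2z}$ as a Bochner integral in $\mathcal{L}(L^2(\S))$. By \eqref{3.5} with $m_1=m_2=0$, one has
$$
\sup_{\tau\in[-\ep',\ep']}(1+\lambda)\|G(\tau,\lambda)\|_{\mathcal{L}(L^2(\S))}\le C,
$$
so the integrand $\lambda^{-z}G(\tau,\lambda)$ has $\mathcal{L}(L^2(\S))$-norm dominated by $C\lambda^{-\Re z}(1+\lambda)^{-1}$, which is integrable on $(0,\infty)$ precisely because $0<\Re z<1$. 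Joint continuity in $(\tau,z)$ is then immediate from dominated convergence and the corresponding continuity of $(\tau,\lambda)\mapsto G(\tau,\lambda)$ from Lemma~\ref{L3.1}.

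Next I would handle $C^l$-regularity in $\tau$. For $0\le m_1\le l$, the bound \eqref{3.5} again provides a uniform estimate $\|\pa_\tau^{m_1}G(\tau,\lambda)\|_{\mathcal{L}(L^2(\S))}\le C_{m_1}(1+\lambda)^{-1}$, so the same dominating function $C_{m_1}\lambda^{-\Re z}(1+\lambda)^{-1}$ legitimizes differentiation under the integral sign in $\tau$. When $l\ge1$, applying the second formula of \eqref{3.3} inside the integral yields precisely \eqref{3.18}, and a completely analogous argument with $G_0$ in place of $G$ produces \eqref{3.19}. For the $C^\infty$-dependence on $z$, note that $\pa_z^k(\gamma(z)\lambda^{-z})=\sum_{j=0}^k\binom{k}{j}\gamma^{(j)}(z)(-\ln\lambda)^{k-j}\lambda^{-z}$, so I must verify that
$$
\int_0^{\infty}\lambda^{-\Re z}|\ln\lambda|^{k-j}(1+\lambda)^{-1}\,d\lambda
$$
is finite, locally uniformly in $z$ on $\{0<\Re z<1\}$. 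Splitting at $\lambda=1$: near $0$, for any small $\delta>0$ we have $|\ln\lambda|^{k-j}\le C_\delta\lambda^{-\delta}$ so the integrand is bounded by $C_\delta\lambda^{-\Re z-\delta}$, integrable once $\delta$ is chosen with $\Re z+\delta<1$; near $\infty$, $|\ln\lambda|^{k-j}/\lambda^{1+\Re z}$ is integrable because $\Re z>0$. Local uniformity is clear. Hence differentiation under the integral in $z$ is justified and the product rule gives the stated formula for $\pa_z(\Lambda_{\alpha_\tau}+P_{0,\tau})^{-2z}$; the same argument yields the $G_0$-version. Joint $C^l\times C^\infty$ regularity in $(\tau,z)$ is obtained by combining both estimates and iterating.

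The main obstacle is that the only decay available from Lemma~\ref{L3.1} is $(1+\lambda)^{-1}$, which just barely matches the weight $\lambda^{-z}$: the integrability of $\lambda^{-\Re z}(1+\lambda)^{-1}$ is precisely the two-sided constraint $0<\Re z<1$ imposed in the statement, and any $z$-derivative introduces a factor $\ln\lambda$ that must be absorbed by trading off an arbitrarily small power of $\lambda$ at both endpoints. This trade-off works only because the strict inequalities $0<\Re z<1$ give room on each side; it is the reason the representation \eqref{3.15} cannot be used beyond this strip without further work, and it is the only delicate point in the proof---once the domination is in place, every assertion of the lemma reduces to standard differentiation under the integral sign.
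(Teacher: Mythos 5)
Your proposal is correct and takes exactly the approach the paper intends. The paper gives no explicit proof of Lemma~\ref{L3.3}, merely remarking that it follows ``with the help of Lemma \ref{L3.1}''; your argument supplies precisely the missing details — the Bochner integrability from \eqref{3.5} with $m_1=m_2=0$ (giving the $(1+\lambda)^{-1}$ domination against the weight $\lambda^{-z}$ on the strip $0<\Re z<1$), differentiation under the integral in $\tau$ via \eqref{3.3} to produce \eqref{3.18}--\eqref{3.19}, and absorption of the $\ln\lambda$ factors from $z$-differentiation by trading a small power of $\lambda$ at each endpoint. The identification of the two-sided strip constraint as the precise condition for integrability, and of the log-trade-off as the only delicate point, is apt.
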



\subsection{The family of smoothing operators $(\Lambda_{\alpha_\tau}+P_{0,\tau})^z-(|D_{\alpha_\tau}|+P_{0,\tau})^z$.}
For
$(\tau,z)\in (-\ep,\ep)\times \C$
let us denote by
$H(\tau,z)$
the operator from
$C^\infty(\S)$
to
$C^\infty(\S)$
defined by
\begin{equation}
H(\tau,z)=(\Lambda_{\alpha_\tau}+P_{0,\tau})^{-z}-(|D_{\alpha_\tau}|+P_{0,\tau})^{-z}.
                           \label{3.20}
\end{equation}
It is extended as a bounded operator on
$L^2(\S)$
when
$\Re z\ge 0$.
In the case of
$\Re z\ge -m\ (m\in \N)$,
it is extended as an operator from
$H^m(\S)$
to
$L^2(\S)$.
Actually we can improve the latter statement.

\begin{lemma}   \label{L3.4}
For every
$s\in \R$
and every
$m\in \N$,
\begin{equation}
H\in C^l\big((-\ep,\ep),C^\infty(\C,\mathcal{L}(H^s(\S),H^{s+m}(\S)))\big).
                           \label{3.21}
\end{equation}
For every compact
$K\subset\C$,
every
$\ep'\in(0,\ep)$
and every
$(j_1,j_2, m_1,m_2)\in \N^4$
such that
$m_1\le l$,
\begin{equation}
\sup_{(\tau,z)\in [-\ep',\ep']\times K}\Big\|D^{j_1}{\pa^{m_1+m_2}H\over \pa \tau^{m_1}\pa z^{m_2}}(\tau,z)D^{j_2}\Big\|_{\mathcal{L}(L^2(\S))}<\infty.
                                       \label{3.22}
\end{equation}
\end{lemma}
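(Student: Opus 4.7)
The plan is to establish the conclusions of Lemma \ref{L3.4} first on the initial strip $\{-2<\Re z<2\}$ via an integral representation, then to extend them to all $z\in\C$ by a recursion.

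On the base strip, substituting $z/2$ for $z$ in \eqref{3.15} and \eqref{3.17} and subtracting yields
$$
H(\tau,z)=\gamma(z/2)\int_0^{\infty}\lambda^{-z/2}(G-G_0)(\tau,\lambda)\,d\lambda
$$
initially for $0<\Re z<2$. The decisive input is estimate \eqref{3.12}: for all $j_1,j_2\in\N$, the sandwiched kernel $D^{j_1}(G-G_0)(\tau,\lambda)D^{j_2}$ has $\mathcal{L}(L^2(\S))$-norm of order $(1+\lambda)^{-2}$, uniformly in $\tau\in[-\ep',\ep']$, with analogous bounds on all $\tau$- and $\lambda$-derivatives. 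Since $\lambda^{-\Re z/2}(1+\lambda)^{-2}$ is integrable on $(0,\infty)$ precisely when $-2<\Re z<2$, the right-hand side above (even when sandwiched by $D^{j_1}$ and $D^{j_2}$) converges absolutely in $\mathcal{L}(L^2(\S))$, uniformly in $z$ on compact subsets of this wider strip. Both sides, paired with smooth $f,g\in C^\infty(\S)$, yield entire functions of $z$---the left via spectral expansions in orthonormal eigenbases of $\Lambda_{\alpha_\tau}+P_{0,\tau}$ and $|D_{\alpha_\tau}|+P_{0,\tau}$, the right by absolute convergence---and they agree on $0<\Re z<2$, hence on $-2<\Re z<2$ by analytic continuation. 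Differentiating under the integral sign gives \eqref{3.21} and \eqref{3.22} for $z$ in compact subsets of the base strip.

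To extend to arbitrary compact $K\subset\C$, I would use the algebraic identity
$$
H(\tau,z)=A_\tau^2\,H(\tau,z+2)+\Delta_\tau\,B_\tau^{-z-2},
$$
where $A_\tau=\Lambda_{\alpha_\tau}+P_{0,\tau}$ and $B_\tau=|D_{\alpha_\tau}|+P_{0,\tau}$. Since $\Lambda_{\alpha_\tau}\phi_{0,\tau}=|D_{\alpha_\tau}|\phi_{0,\tau}=0$ with $P_{0,\tau}$ projecting onto $\phi_{0,\tau}$, one has $A_\tau^2=\Lambda_{\alpha_\tau}^2+P_{0,\tau}$ and $B_\tau^2=D_{\alpha_\tau}^2+P_{0,\tau}$, so $A_\tau^2-B_\tau^2=\Delta_\tau$ is smoothing by \eqref{2.1}. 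The solved-for counterpart $H(\tau,z+2)=A_\tau^{-2}H(\tau,z)-A_\tau^{-2}\Delta_\tau B_\tau^{-z-2}$ handles the opposite direction. Partitioning $K$ into finitely many pieces of width less than $4$ in $\Re z$ and iterating the appropriate identity finitely many times expresses $H(\tau,z)$ on each piece as a finite sum of terms $A_\tau^{\pm 2N}H(\tau,z')$ (with $z'$ in the base strip) plus correction terms $A_\tau^{2k}\Delta_\tau B_\tau^{-z-2j}$.

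The main technical obstacle lies in controlling the sandwich $D^{j_1}A_\tau^{2N}H(\tau,z')D^{j_2}$, since $A_\tau^{2N}$ is a pseudodifferential operator of order $2N$. I would substitute the integral representation for $H(\tau,z')$ and factor, for any even integer $M\ge j_1+2N$,
$$
D^{j_1}A_\tau^{2N}=\big[D^{j_1}A_\tau^{2N}(D^2+I)^{-M/2}\big]\cdot(D^2+I)^{M/2},
$$
where the first bracket is a pseudodifferential operator of non-positive order, bounded on $L^2(\S)$ uniformly in $\tau\in[-\ep',\ep']$ with $C^l$-$\tau$-dependence. Expanding $(D^2+I)^{M/2}$ binomially and invoking \eqref{3.12} for each term $D^{2i}(G-G_0)D^{j_2}$ yields the required uniform $O((1+\lambda)^{-2})$ bound for the integrand. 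The correction terms $A_\tau^{2k}\Delta_\tau B_\tau^{-z-2j}$ are handled analogously, using that $\Delta_\tau$ is smoothing. Leibniz expansions for the $\tau$- and $z$-derivatives combined with finite summation over the pieces covering $K$ then yield \eqref{3.21} and \eqref{3.22}.
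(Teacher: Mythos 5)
Your proposal is correct, and it takes a route that is meaningfully different from---and in one respect cleaner than---the paper's. Both proofs start from the integral representation $H(\tau,z)=\gamma(z/2)\int_0^\infty\lambda^{-z/2}(G-G_0)(\tau,\lambda)\,d\lambda$ and the key estimate \eqref{3.12}, but you push the base strip to $\{-2<\Re z<2\}$ directly by absolute convergence plus analytic continuation, while the paper works only with $\{0<\Re z<2\}$. The more substantial difference is the recursion: the paper shifts $z$ by $1$ using the identities \eqref{3.32} and \eqref{3.36}, whose correction term involves $H(\tau,-1)$, and therefore it must first spend the block of algebra \eqref{3.24}--\eqref{3.30} proving separately that $H(\tau,-1)$ is smoothing. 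Your shift-by-$2$ identity $H(\tau,z)=A_\tau^2H(\tau,z+2)+\Delta_\tau B_\tau^{-z-2}$ (with $A_\tau=\Lambda_{\alpha_\tau}+P_{0,\tau}$, $B_\tau=|D_{\alpha_\tau}|+P_{0,\tau}$, $\Delta_\tau=\Lambda_{\alpha_\tau}^2-D_{\alpha_\tau}^2$), together with its inverted form, uses as correction the operator $\Delta_\tau$, which is \emph{already} known to be smoothing from \eqref{2.1} and \eqref{3.14}, so the preparatory step for $H(\tau,-1)$ is bypassed entirely. The sandwich estimate \eqref{3.22} is then obtained as in the paper, by factoring out a pseudodifferential operator of non-positive order and letting the remaining powers of $D$ fall onto $(G-G_0)$ where \eqref{3.12} controls them. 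The one small imprecision in the write-up is the phrase ``pieces of width less than $4$''; the cleaner statement is simply that a compact $K\subset\C$ lies in a finite-width strip, so finitely many iterations of the appropriate shift identity reduce to the base strip. The price you pay for avoiding the $H(\tau,-1)$ preparation is that you need to justify that the integral formula for $H$ is valid on the larger strip $-2<\Re z<2$ (the paper only proves \eqref{3.15} and \eqref{3.17} for $\Re z\in(0,2)$); your analytic-continuation argument for the scalar pairings handles this correctly, but it should be made explicit that the resulting operator-valued equality, not just the scalar one, is what the estimates of Lemma~\ref{L3.2} actually furnish.
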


\begin{proof}
We start with the case when
$\Re z\in (0,2)$.
As is seen from \eqref{3.15} and \eqref{3.16},
$$
H(\tau,z)=\gamma(z/2)\int_0^{\infty}\lambda^{-{z/2}}(G-G_0)(\tau,\lambda)\,d\lambda.
$$
With the help of Lemma \ref{L3.2}, this implies
\begin{equation}
H\in C^l\big((-\ep,\ep),C^\infty( \{z\in \C\mid \Re z\in (0,2)\},\mathcal{L}(H^s(\S),H^{s+m}(\S)))\big)\quad\mbox{for}\quad (s,m)\in \R\times \N.
                                            \label{3.23}
\end{equation}

By \eqref{3.20},
$$
\begin{aligned}
(\Lambda_{\alpha_\tau}+P_{0,\tau})^2H(\tau,1)
&=(\Lambda_{\alpha_\tau}+P_{0,\tau})^2\Big((\Lambda_{\alpha_\tau}+P_{0,\tau})^{-1}-(|D_{\alpha_\tau}|+P_{0,\tau})^{-1}\Big)\\
&=(\Lambda_{\alpha_\tau}+P_{0,\tau})-(\Lambda_{\alpha_\tau}+P_{0,\tau})^2(|D_{\alpha_\tau}|+P_{0,\tau})^{-1}.
\end{aligned}
$$
We rewrite this in the form
$$
\begin{aligned}
(\Lambda_{\alpha_\tau}+P_{0,\tau})^2H(\tau,1)
&=\Big((\Lambda_{\alpha_\tau}+P_{0,\tau})-(|D_{\alpha_\tau}|+P_{0,\tau})\Big)\\
&-\Big((\Lambda_{\alpha_\tau}+P_{0,\tau})^2(|D_{\alpha_\tau}|+P_{0,\tau})^{-1}-(|D_{\alpha_\tau}|+P_{0,\tau})\Big)
\end{aligned}
$$
and again use \eqref{3.20} to obtain
\begin{equation}
(\Lambda_{\alpha_\tau}+P_{0,\tau})^2H(\tau,1)=H(\tau,-1)
-\Big((\Lambda_{\alpha_\tau}+P_{0,\tau})^2(|D_{\alpha_\tau}|+P_{0,\tau})^{-1}-(|D_{\alpha_\tau}|+P_{0,\tau})\Big).
                                            \label{3.24}
\end{equation}

On using the equalities
$$
(\Lambda_{\alpha_\tau}+P_{0,\tau})^2=\Lambda_{\alpha_\tau}^2+P_{0,\tau},\quad
|D_{\alpha_\tau}|+P_{0,\tau}=(D_{\alpha_\tau}^2+P_{0,\tau})^{1/2},
$$
we transform the second term on the right-hand side of \eqref{3.24} as follows:
$$
\begin{aligned}
(\Lambda_{\alpha_\tau}+P_{0,\tau})^2(|D_{\alpha_\tau}|+P_{0,\tau})^{-1}&-(|D_{\alpha_\tau}|+P_{0,\tau})=\\
&=(\Lambda_{\alpha_\tau}^2+P_{0,\tau})(D_{\alpha_\tau}^2+P_{0,\tau})^{-1/2}-(D_{\alpha_\tau}^2+P_{0,\tau})^{1/2}\\
&=\Big((\Lambda_{\alpha_\tau}^2+P_{0,\tau})-(D_{\alpha_\tau}^2+P_{0,\tau})\Big)(D_{\alpha_\tau}^2+P_{0,\tau})^{-1/2}\\
&=(\Lambda_{\alpha_\tau}^2-D_{\alpha_\tau}^2)(D_{\alpha_\tau}^2+P_{0,\tau})^{-1/2}.
\end{aligned}
$$
Substitute this value into \eqref{3.24}
\begin{equation}
(\Lambda_{\alpha_\tau}+P_{0,\tau})^2H(\tau,1)=H(\tau,-1)
-(\Lambda_{\alpha_\tau}^2-D_{\alpha_\tau}^2)(D_{\alpha_\tau}^2+P_{0,\tau})^{-1/2}.
                                            \label{3.25}
\end{equation}
By \eqref{3.11},
$(D_{\alpha_\tau}^2+P_{0,\tau})^{-1/2}=\big(G_0(\tau,0)\big)^{1/2}$,
and \eqref{3.25} takes the form
\begin{equation}
(\Lambda_{\alpha_\tau}+P_{0,\tau})^2H(\tau,1)=H(\tau,-1)
-(\Lambda_{\alpha_\tau}^2-D_{\alpha_\tau}^2)\big(G_0(\tau,0)\big)^{1/2}.
                                            \label{3.26}
\end{equation}

By \eqref{3.13},
$$
(G-G_0)(\tau,0)=-G(\tau,0)(\Lambda_{\alpha_\tau}^2-D_{\alpha_\tau}^2)G_0(\tau,0).
$$
Express $\Lambda_{\alpha_\tau}^2-D_{\alpha_\tau}^2$ from this equality
$$
\Lambda_{\alpha_\tau}^2-D_{\alpha_\tau}^2=-\big(G(\tau,0)\big)^{-1}(G-G_0)(\tau,0)\big(G_0(\tau,0)\big)^{-1}.
$$
Substitute this expression into \eqref{3.26}
\begin{equation}
(\Lambda_{\alpha_\tau}+P_{0,\tau})^2H(\tau,1)=H(\tau,-1)
+\big(G(\tau,0)\big)^{-1}(G-G_0)(\tau,0)\big(G_0(\tau,0)\big)^{-1/2}.
                                            \label{3.27}
\end{equation}

By \eqref{3.2},
$$
\big(G(\tau,0)\big)^{-1}=\Lambda_{\alpha_\tau}^2+P_{0,\tau}=(\Lambda_{\alpha_\tau}+P_{0,\tau})^2.
$$
Substituting this value into \eqref{3.27}, we finally obtain
\begin{equation}
(\Lambda_{\alpha_\tau}+P_{0,\tau})^2H(\tau,1)=H(\tau,-1)
+(\Lambda_{\alpha_\tau}+P_{0,\tau})^2(G-G_0)(\tau,0)\big(G_0(\tau,0)\big)^{-1/2}.
                                            \label{3.28}
\end{equation}

We write \eqref{3.28} in the form
\begin{equation}
H(\tau,-1)=(\Lambda_{\alpha_\tau}+P_{0,\tau})^2H(\tau,1)-(\Lambda_{\alpha_\tau}+P_{0,\tau})^2(G-G_0)(\tau,0)\big(G_0(\tau,0)\big)^{-1/2}.
                                            \label{3.29}
\end{equation}

With the help of Lemmas \ref{L2.2} and \ref{L3.2} and of \eqref{3.23} for
$z=1$,
\eqref{3.29} implies
\begin{equation}
H(\tau,-1)\in C^l\big((-\ep,\ep),\mathcal{L}(H^s(\S),H^{s+m}(\S))\big)\quad\mbox{for any}\quad (s,m)\in \R\times \N.
                                \label{3.30}
\end{equation}

Now, we prove by induction on
$k\in\N$
that
\begin{equation}
H\in C^l\big((-\ep,\ep), C^\infty(\{z\in \C\mid \Re z\in (-k,k+2)\},\mathcal{L}(H^s(\S),H^{s+m}(\S)))\big)\ \mbox{for}\ (s,m)\in \R\times \N.
                                      \label{3.31}
\end{equation}
For
$k=0$,
\eqref{3.31} coincides with \eqref{3.23}. Assume \eqref{3.31} to be valid for some
$0\le k\in\N$.

The recurrent relation
\begin{equation}
H(\tau,z)=(\Lambda_{\alpha_\tau}+P_{0,\tau})H(\tau,z+1)+H(\tau,-1)(|D_{\alpha_\tau}|+P_{0,\tau})^{-z-1}
                                        \label{3.32}
\end{equation}
easily follows from the definition \eqref{3.20}.

Together with Lemma \ref{L2.2}, the induction hypothesis \eqref{3.31} implies
that, for any $(s,m)\in\R\times\N$,
\begin{equation}
(\Lambda_{\alpha_\tau}\!+\!P_{0,\tau})H(\tau,z\!+\!1)
\in C^l\big((\!-\!\ep,\ep), C^\infty( \{z\in \C\mid \Re z\in (\!-\!k\!-\!1,k\!+\!1)\},\mathcal{L}(H^s(\S),H^{s\!+\!m}(\S)))\big).
                                      \label{3.33}
\end{equation}

The eigenbasis of
$|D_{\alpha_\tau}|+P_{0,\tau}$
is given by the family
$\{\phi_l\}_{l\in \Z}$,
see \eqref{2.2}. The eigenvalue associated to
$\phi_l$
is
$\max(|l|,1)$.
Therefore
$$
(|D_{\alpha_\tau}|+P_{0,\tau})^{-z-1}\in C^l\big((-\ep,\ep),C^\infty(\{z\in \C,\ \Re z\in (-\rho,\rho)\},\mathcal{L}(H^s(\S),H^{s+1-\rho}(\S)))\big)
$$
for any
$s\in \R$
and
$\rho>0$.
Together with \eqref{3.30}, this gives
\begin{equation}
H(\tau,-1)(|D_{\alpha_\tau}|+P_{0,\tau})^{-z-1}\in C^l\big((-\ep,\ep), C^\infty(\C,\mathcal{L}(H^s(\S),H^{s+m}(\S)))\big)\ \mbox{for}\ (s,m)\in \R\times \N.
                                \label{3.34}
\end{equation}

With the help of \eqref{3.33} and \eqref{3.34}, the recurrent relation \eqref{3.32} gives for any
$(s,m)\in \R\times \N$
$$
H\in C^l\big((-\ep,\ep),C^\infty( \{z\in \C,\ \Re z\in (-k-1,k+1)\},\mathcal{L}(H^s(\S),H^{s+m}(\S)))\big).
$$
Uniting this with \eqref{3.31}, we obtain for any
$(s,m)\in \R\times \N$
\begin{equation}
H\in C^l\big((-\ep,\ep), C^\infty(\{z\in \C\mid \Re z\in (-k-1,k+2)\},\mathcal{L}(H^s(\S),H^{s+m}(\S)))\big).
                                        \label{3.35}
\end{equation}

The recurrent relation
\begin{equation}
H(\tau,z)=(\Lambda_{\alpha_\tau}+P_{0,\tau})^{-1}H(\tau,z-1)+H(\tau,1)(|D_{\alpha_\tau}|+P_{0,\tau})^{-z+1}
                                       \label{3.36}
\end{equation}
is proved similarly to \eqref{3.32}. In the same way as \eqref{3.35} has been proven, the induction hypothesis \eqref{3.31} implies with the help of \eqref{3.36} that
\begin{equation}
H\in C^l\big((-\ep,\ep), C^\infty(\{z\in \C\mid \Re z\in (-k,k+3)\},\mathcal{L}(H^s(\S),H^{s+m}(\S)))\big)\ \mbox{for}\ (s,m)\in \R\times \N.
                                       \label{3.37}
\end{equation}

Uniting \eqref{3.35} and \eqref{3.37}, we obtain for any
$(s,m)\in \R\times \N$
$$
H\in C^l\big((-\ep,\ep), C^\infty(\{z\in \C\mid \Re z\in (-k-1,k+3)\},\mathcal{L}(H^s(\S),H^{s+m}(\S)))\big).
$$
This finishes the induction step.

Being valid for every
$k$,
\eqref{3.31} proves \eqref{3.21}.
\end{proof}

\subsection{Smoothness of $\zeta_{\alpha_\tau}(z)$ }
We recall that
$$
\zeta_{\alpha_\tau}(z)=2\zeta_R(z)+\T\big[H(\tau,z)\big].
$$

\begin{lemma}  \label{L3.5}
The function
$\zeta_{\alpha_\tau}(z)$
belongs to
$C^l\big( (-\ep,\ep), C^\infty(\C\b\{1\})\big)$
and, for
$l\ge 1$,
\begin{equation}
{\pa^{m_1+m_2}\zeta_{\alpha_\tau}(z)\over \pa \tau^{m_1}\pa z^{m_2}}=\T\Big[{\pa^{m_1+m_2}H\over \pa \tau^{m_1}\pa z^{m_2}}(\tau,z)\Big]
                                           \label{3.38}
\end{equation}
for any
$(\tau,z)\in (-\ep,\ep)\times (\C\b\{1\})$
and any
$m_1,m_2\in \N$
such that
$1\le m_1\le l$.
\end{lemma}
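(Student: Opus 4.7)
The plan is to exploit the identity $\zeta_{\alpha_\tau}(z)=2\zeta_R(z)+\T[H(\tau,z)]$ stated at the top of the subsection, and reduce the claim to showing that $H(\tau,z)$ is a trace-class operator on $L^2(\S)$ depending $C^l$ on $\tau$ and $C^\infty$ on $z$ with values in the trace ideal $\mathcal{S}_1(L^2(\S))$. Granting this, the derivative formula \eqref{3.38} follows immediately from the continuity of the trace as a linear functional on $\mathcal{S}_1(L^2(\S))$, together with the fact that $2\zeta_R(z)$ is annihilated by $\pa_\tau$ as soon as $m_1\ge 1$.

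The core step is to pass from the smoothing-operator bounds of Lemma \ref{L3.4} to trace-class bounds. I would apply Lemma \ref{L3.4} with $s=0$ and $m=2$ to obtain that $H$ is of class $C^l\cdot C^\infty$ as a map into $\mathcal{L}(L^2(\S),H^2(\S))$, with operator-norm estimates locally uniform on $(-\ep,\ep)\times\C$. Since the eigenvalues $(n^2+1)^{-1}$ ($n\in\Z$) of $(D^2+1)^{-1}:L^2(\S)\rightarrow L^2(\S)$ are summable, this operator is trace-class. The factorization
$$
H(\tau,z)=(D^2+1)^{-1}\bigl((D^2+1)H(\tau,z)\bigr)
$$
then exhibits $H(\tau,z)$ as a fixed trace-class operator composed with a locally uniformly bounded operator on $L^2(\S)$, yielding
$$
\|H(\tau,z)\|_{\mathcal{S}_1}\le \|(D^2+1)^{-1}\|_{\mathcal{S}_1}\,\|H(\tau,z)\|_{\mathcal{L}(L^2(\S),H^2(\S))}.
$$
Applying the same factorization to every derivative $\pa^{m_1+m_2}H/\pa\tau^{m_1}\pa z^{m_2}$ (for $m_1\le l$), and combining with routine difference-quotient arguments based on Lemma \ref{L3.4}, upgrades $H$ to a $C^l\cdot C^\infty$-family with values in $\mathcal{S}_1(L^2(\S))$.

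It remains to identify $\T[H(\tau,z)]$ with $\zeta_{\alpha_\tau}(z)-2\zeta_R(z)$. For $\Re z>1$, both summands defining $H(\tau,z)$ are individually trace-class and their traces are computed directly on eigenbases: $\Lambda_{\alpha_\tau}+P_{0,\tau}$ has eigenvalue $1$ on $\phi_{0,\tau}$ and eigenvalues $\{\lambda_{k,\tau}\}_{k\ge 1}$ on the orthogonal complement, while $|D_{\alpha_\tau}|+P_{0,\tau}$ has eigenvalue $1$ on $\phi_{0,\tau}$ and eigenvalue $|n|$ of multiplicity two on the span of $\phi_{\pm n,\tau}$ for $n\ge 1$. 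Subtracting and invoking the definitions of $\zeta_{\alpha_\tau}$ and $\zeta_R$ yields $\T[H(\tau,z)]=\zeta_{\alpha_\tau}(z)-2\zeta_R(z)$ on the half-plane $\Re z>1$. The left-hand side is entire in $z$ by the previous paragraph, and the right-hand side is entire by the introduction, so the identity extends to all $z\in\C$ by analytic continuation. The asserted $C^l\cdot C^\infty$-regularity of $\zeta_{\alpha_\tau}(z)$ on $\C\setminus\{1\}$ and the formula \eqref{3.38} then both follow. The only non-routine point in the argument is the passage from smoothing-operator estimates to trace-norm estimates, which the factorization above handles cleanly.
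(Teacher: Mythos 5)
Your proposal is correct and takes essentially the same approach as the paper. The paper expands $\T[H(\tau,z)]=(2\pi)^{-1}\sum_n\l H(\tau,z)e^{in\theta},e^{in\theta}\r$, extracts the factor $n^{-2}$ by inserting $D^2$, and differentiates under the sum via the Weierstrass $M$-test; your factorization $H=(D^2+1)^{-1}\bigl((D^2+1)H\bigr)$ together with continuity of the trace on $\mathcal{S}_1(L^2(\S))$ is the same estimate recast in the language of trace ideals.
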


\begin{proof}
First we note that
$$
\zeta_{\alpha_\tau}(z)=2\zeta_R(z)+(2\pi)^{-1}\sum_{n\in \Z}g_n(\tau,z),
$$
where
$g_n(\tau,z)=\l H(\tau,z)e^{in\theta},e^{in\theta}\r$.
Lemma \ref{L3.4} implies that
$g_n\in C^l( (-\ep,\ep), C^\infty(\C\b\{1\}))$
and, for
$m_1\le l$,
$$
{\pa^{m_1+m_2}g_n\over \pa \tau^{m_1}\pa z^{m_2}}(\tau,z)
=\Big\langle{\pa^{m_1+m_2}H\over \pa \tau^{m_1}\pa z^{m_2}}(\tau,z) e^{in\theta},e^{in \theta}\Big\rangle.
$$
With the help of \eqref{3.22}, this implies for
$\ep'\in(0,\ep)$
and
$n\not=0$
$$
\left|{\pa^{m_1+m_2}g_n\over \pa \tau^{m_1}\pa z^{m_2}}(\tau,z)\right|
=n^{-2}\left|\Big\langle D^2{\pa^{m_1+m_2}H\over \pa \tau^{m_1}\pa z^{m_2}}(\tau,z) e^{in\theta},e^{in \theta}\Big\rangle\right|
\le Cn^{-2},
$$
where
$C=\sup_{\tau\in [-\ep',\ep']}\|D^2{\pa^{m1+m_2}H\over \pa \tau^{m_1}\pa z^{m_2}}(\tau,z)\|_{\mathcal{L}(L^2(\S))}<\infty$.
Using the classical fact on functions series, we see that
$\sum_{n\in \Z}g_n\in C^l\big( (-\ep,\ep), C^\infty(\C\b\{1\})\big)$.
This implies \eqref{3.38}.
\end{proof}

\subsection{First variation of $\zeta_{\alpha_\tau}(z)$ with respect to $\tau$}
We assume
$l\ge 1$
in this subsection.

\begin{lemma}    \label{L3.6}
For
$\tau\in (-\ep,\ep)$
and any
$z\in\C$,
\begin{equation}
{\pa \big(\zeta_{\alpha_\tau}(z)\big)\over \pa \tau}=-z\,\T\Big[H(\tau,z){\pa \ln(\alpha_\tau)\over \pa\tau}\Big].
                                        \label{3.39}
\end{equation}
At
$\tau=0$
it reads as
\begin{equation}
\left.{\pa \big(\zeta_{\alpha_\tau}(z)\big)\over \pa \tau}\right|_{\tau=0}=-z\,\T\big[\big((\Lambda_a+P_0)^{-z}-(|D_a|+P_0)^{-z}\big)a^{-1}\beta\big],
                                             \label{3.40}
\end{equation}
where
$\beta(\theta)=\frac{\partial\alpha}{\partial\tau}(\theta,0)$
is the direction of the variation
$\alpha_\tau$.
\end{lemma}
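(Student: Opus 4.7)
The plan is to start from Lemma~\ref{L3.5}, which for $\Re z>1$ gives $\pa_\tau\zeta_{\alpha_\tau}(z)=\T[\pa_\tau H(\tau,z)]=\pa_\tau\T[T_1^{-z}]-\pa_\tau\T[T_2^{-z}]$, where $T_1=\Lambda_{\alpha_\tau}+P_{0,\tau}$ and $T_2=|D_{\alpha_\tau}|+P_{0,\tau}$. I would compute each piece separately, and then extend \eqref{3.39} to all $z\in\C$ by analytic continuation.

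\textbf{The $T_1$-contribution.} Writing $A:=\pa\ln\alpha_\tau/\pa\tau$ (a multiplication operator), the formulas \eqref{2.3} and \eqref{3.1} combine to give
$$
\pa_\tau T_1=\tfrac{1}{2}(AT_1+T_1A)-(AP_{0,\tau}+P_{0,\tau}A).
$$
The identity $\pa_\tau\T[T_1^{-z}]=-z\,\T[T_1^{-z-1}\pa_\tau T_1]$, valid for $\Re z>1$, follows from \eqref{3.18} with $z$ replaced by $z/2$ combined with \eqref{3.3}: using $\pa_\tau T_1^2=T_1\pa_\tau T_1+\pa_\tau T_1\cdot T_1$ and cyclicity reduces the integrand to $2\T[T_1\,G(\tau,\lambda)^2\,\pa_\tau T_1]$, and the elementary integral $\int_0^\infty\lambda^{-z/2}\mu(\mu^2+\lambda)^{-2}d\lambda=\mu^{-z-1}(z/2)\pi/\sin(\pi z/2)$, combined with the reflection formula \eqref{3.16}, produces the factor $-z$. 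Using cyclicity again together with the identities $P_{0,\tau}T_1=T_1P_{0,\tau}=P_{0,\tau}$ (which hold because $\Lambda_{\alpha_\tau}\phi_{0,\tau}=0$, and hence $P_{0,\tau}T_1^{-z-1}=T_1^{-z-1}P_{0,\tau}=P_{0,\tau}$), I obtain
$$
\pa_\tau\T[T_1^{-z}]=-z\,\bigl(\T[T_1^{-z}A]-2\T[P_{0,\tau}A]\bigr).
$$

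\textbf{The $T_2$-contribution and two vanishing identities.} The eigenvalues of $T_2$ are $\{\max(|n|,1)\}_{n\in\Z}$, independent of $\tau$, so $\T[T_2^{-z}]=1+2\zeta_R(z)$ for $\Re z>1$ and $\pa_\tau\T[T_2^{-z}]=0$. Differentiating the normalization \eqref{1.7} yields $\int_\S A\alpha_\tau^{-1}\,d\theta=0$, and since \eqref{2.2} gives $|\phi_{n,\tau}(\theta)|^2=(2\pi\alpha_\tau(\theta))^{-1}$ \emph{independently of} $n$, this forces $\l A\phi_{n,\tau},\phi_{n,\tau}\r=0$ for every $n\in\Z$. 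Expanding the traces in the orthonormal basis $\{\phi_{n,\tau}\}_{n\in\Z}$ then gives both $\T[P_{0,\tau}A]=0$ and $\T[T_2^{-z}A]=0$.

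\textbf{Conclusion and analytic continuation.} Combining the above, for $\Re z>1$,
$$
\pa_\tau\zeta_{\alpha_\tau}(z)=-z\,\T[T_1^{-z}A]=-z\,\bigl(\T[T_1^{-z}A]-\T[T_2^{-z}A]\bigr)=-z\,\T[H(\tau,z)A],
$$
which is \eqref{3.39}. Both sides are entire in $z$---the left-hand side because $\zeta_{\alpha_\tau}-2\zeta_R$ is entire (cf.\ the Remark after Theorem~\ref{Th1.3}), the right-hand side by Lemma~\ref{L3.4}---so analytic continuation extends \eqref{3.39} to all $z\in\C$. Formula \eqref{3.40} is then just \eqref{3.39} specialized to $\tau=0$, where $\pa\ln\alpha_\tau/\pa\tau|_{\tau=0}=a^{-1}\beta$. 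The main obstacle will be tracking which operator ideals contain each factor at each step, so that cyclicity of the trace and the commutation $\pa_\tau\circ\T=\T\circ\pa_\tau$ are legitimate; the regularity bounds collected in Lemmas~\ref{L3.1}--\ref{L3.4} provide exactly what is needed.
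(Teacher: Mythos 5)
Your proof is correct, and it takes a genuinely different route from the paper. The paper's proof works in the strip $\Re z\in(0,2)$, keeps $\T[\pa_\tau H]$ as the basic object, and via \eqref{3.18}--\eqref{3.19} expresses it as $\gamma(z/2)\int_0^\infty\lambda^{-z/2}\T[\pa_\tau(G-G_0)]\,d\lambda$; the crucial trick there is the identity \eqref{3.51}, showing $\T[\pa_\tau(G-G_0)]$ is an exact $\lambda$-derivative (the residual $P_{0,\tau}$-terms cancel between $G$ and $G_0$ because $G^2P_{0,\tau}=G_0^2P_{0,\tau}=(1+\lambda)^{-2}P_{0,\tau}$), so the factor $-z$ comes out of an integration by parts. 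You instead work in the half-plane $\Re z>1$ where $T_1^{-z}$ and $T_2^{-z}$ are separately trace-class, which lets you split $\T[\pa_\tau H]=\T[\pa_\tau T_1^{-z}]-\T[\pa_\tau T_2^{-z}]$ and treat each piece on its own; your two vanishing identities $\T[P_{0,\tau}A]=0$ and $\T[T_2^{-z}A]=0$ (a neat use of $|\phi_{n,\tau}|^2=(2\pi\alpha_\tau)^{-1}$ being $n$-independent, combined with the differentiated normalization) then kill all the unwanted terms. This is shorter and arguably more transparent, although it leans on the normalization \eqref{1.7} in a place where the paper's cancellation does not; the paper only invokes that observation later, in the proof of Theorem~\ref{Th4.1} (formula \eqref{5.2}). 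One imprecision you should fix in a polished write-up: you write $\pa_\tau\T[T_i^{-z}]$, but what \eqref{3.18} and Lemma~\ref{L3.5} actually give you is $\T[\pa_\tau T_i^{-z}]$; commuting $\pa_\tau$ with $\T$ needs a trace-norm $C^1$ argument you have not supplied, and for $T_2$ the reasoning ``eigenvalues are $\tau$-independent, so $\pa_\tau\T[T_2^{-z}]=0$'' is only about $\pa_\tau\T$, not $\T\pa_\tau$. The conclusion is still right---one checks $\T[\pa_\tau T_2^{-z}]=\sum_n\max(|n|,1)^{-z}\,\pa_\tau\|\phi_{n,\tau}\|^2=0$, or derives it from the $G_0$-analog of \eqref{3.18} together with $\l A\phi_{n,\tau},\phi_{n,\tau}\r=0$---but state it as $\T[\pa_\tau(\cdot)]$ throughout to avoid the unjustified commutation.
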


{\bf Remark.} The derivative
${\pa (\zeta_{\alpha_\tau}(z))\over \pa \tau}$
is well defined at any
$z\in\C$
although the zeta function
$\zeta_{\alpha_\tau}(z)$
is not defined at the pole
$z=1$.
See the remark after Theorem \ref{Th1.3}.

\begin{proof}
We reduce the computation to the case
$\Re z\in (0,2)$
by holomorphy in
$z\in \C$.
By \eqref{3.38},
\begin{equation}
{\pa \big(\zeta_{\alpha_\tau}(z)\big)\over \pa \tau }=\T\Big[{\pa H\over \pa \tau}(\tau,z)\Big].
                                  \label{3.41}
\end{equation}

Differentiate equality \eqref{3.20}
\begin{equation}
\frac{\partial H}{\partial\tau}=\frac{\partial (\Lambda_{\alpha_\tau}+P_{0,\tau})^{-z}}{\partial\tau}
-\frac{\partial (|D_{\alpha_\tau}|+P_{0,\tau})^{-z}}{\partial\tau}.
                           \label{3.42}
\end{equation}
By \eqref{3.18} and \eqref{3.19},
$$
\begin{aligned}
\frac{\partial (\Lambda_{\alpha_\tau}+P_{0,\tau})^{-z}}{\partial\tau}&=-\gamma(z/2)\int_0^\infty\lambda^{-z/2}
G(\tau,\lambda)\frac{\partial(\Lambda_{\alpha_\tau}^2+P_{0,\tau})}{\partial\tau}G(\tau,\lambda)\,d\lambda,\\
\frac{\partial (|D_{\alpha_\tau}|+P_{0,\tau})^{-z}}{\partial\tau}&=-\gamma(z/2)\int_0^\infty\lambda^{-z/2}
G_0(\tau,\lambda)\frac{\partial(D_{\alpha_\tau}^2+P_{0,\tau})}{\partial\tau}G_0(\tau,\lambda)\,d\lambda.
\end{aligned}
$$
Substitute these values into \eqref{3.42}
\begin{equation}
\frac{\partial H}{\partial\tau}(\tau,z)=
-\gamma(\frac{z}{2})\!\!\int_0^\infty\!\!\lambda^{-\frac{z}{2}}
\Big(G(\tau,\lambda)\frac{\partial(\Lambda_{\alpha_\tau}^2\!+\!P_{0,\tau})}{\partial\tau}G(\tau,\lambda)
-G_0(\tau,\lambda)\frac{\partial(D_{\alpha_\tau}^2\!+\!P_{0,\tau})}{\partial\tau}G_0(\tau,\lambda)\Big)d\lambda.
                           \label{3.43}
\end{equation}

Recall that, by \eqref{3.3},
\begin{equation}
\frac{\partial G}{\partial\tau}(\tau,\lambda)=-G(\tau,\lambda)\frac{\partial(\Lambda_{\alpha_\tau}^2\!+\!P_{0,\tau})}{\partial\tau}G(\tau,\lambda).
                           \label{3.44}
\end{equation}
The similar formula for $G_0$
$$
\frac{\partial G_0}{\partial\tau}(\tau,\lambda)=-G_0(\tau,\lambda)\frac{\partial(D_{\alpha_\tau}^2\!+\!P_{0,\tau})}{\partial\tau}G(\tau,\lambda)
$$
is proved in the same way as \eqref{3.3}. With the help of two last formulas, \eqref{3.43} takes the form
\begin{equation}
{\pa H\over \pa \tau}(\tau,z)=\gamma(z/2)\int_0^{\infty}\lambda^{-z/2}{\pa (G-G_0)\over \pa \tau}(\tau,\lambda)\,d\lambda,
                                  \label{3.45}
\end{equation}

By Lemma \ref{L3.1},
$G(\tau,\lambda)$
and
$G_0(\tau,\lambda)$
and their derivatives are trace class operators in
$L^2(\S)$
at fixed
$(\tau,\lambda)$
with an appropriate bound in
$\lambda$
given by \eqref{3.5}. Hence we can transpose the trace operator and integration over
$\lambda\in (0,+\infty)$
on \eqref{3.45}. In this way we obtain
\begin{equation}
\T\Big[{\pa H\over \pa \tau}\Big]
=\gamma(z/2)\int_0^{\infty}\lambda^{-z/2}\Big\{\T\Big[{\pa G\over \pa \tau}\Big]-\T\Big[{\pa G_0\over \pa \tau}\Big]\Big\}(\tau,\lambda)\,d\lambda.
                                  \label{3.46}
\end{equation}

Formula \eqref{3.44} implies
\begin{equation}
\T\Big[{\pa G\over \pa \tau}(\tau,\lambda)\Big]=-\T\Big[\big(G(\tau,\lambda)\big)^2\,{\pa(\Lambda_{\alpha_\tau}^2+P_{0,\tau})\over \pa\tau}\Big].
                                  \label{3.47}
\end{equation}
We have used the classical fact:
$\T[AB]=\T[BA]$
if
$A$
is a trace class operator and
$B$
is a bounded operator, see \cite[Theorem 3.1]{Si}.
As easily follows from \eqref{2.3} and \eqref{3.1},
\begin{eqnarray*}
{\pa(\Lambda_{\alpha_\tau}^2+P_{0,\tau})\over \pa\tau}&=&{1\over 2}{\pa\ln(\alpha_\tau)\over \pa\tau}\Lambda_{\alpha_\tau}^2
+{1\over 2}\Lambda_{\alpha_\tau}^2{\pa\ln(\alpha_\tau)\over \pa\tau}\\
&+&\Lambda_{\alpha_\tau}{\pa\ln(\alpha_\tau)\over \pa\tau}\Lambda_{\alpha_\tau}-{1\over 2}{\pa\ln(\alpha_\tau)\over \pa\tau}P_{0,\tau}-{1\over 2}P_{0,\tau}{\pa\ln(\alpha_\tau)\over \pa\tau}.
\end{eqnarray*}
We substitute this value into \eqref{3.47} and use again the classical property of the trace. Besides this, the operators
$\Lambda_{\alpha_\tau}$
and
$P_{0,\tau}$
commute with
$G(\tau,\lambda)$.
In this way we obtain
$$
\T\Big[{\pa G\over \pa \tau}(\tau,\lambda)\Big]=-2\T\Big[\big(G(\tau,\lambda)\big)^2\Lambda_{\alpha_\tau}^2{\pa\ln(\alpha_\tau)\over \pa\tau}\Big]
+\T\Big[\big(G(\tau,\lambda)\big)^2P_{0,\tau}{\pa\ln(\alpha_\tau)\over \pa\tau}\Big].
$$
On using the equality
$$
\Lambda_{\alpha_\tau}^2=\big(G(\tau,\lambda)\big)^{-1}-P_{0,\tau}-\lambda
$$
that follows from \eqref{3.2}, we transform the previous formula to the form
\begin{equation}
\begin{aligned}
\T\Big[{\pa G\over \pa \tau}(\tau,\lambda)\Big]&=-2\T\Big[G(\tau,\lambda){\pa\ln(\alpha_\tau)\over \pa\tau}\Big]
+2\lambda\T\Big[\big(G(\tau,\lambda)\big)^2P_{0,\tau}{\pa\ln(\alpha_\tau)\over \pa\tau}\Big]\\
&+3\T\Big[\big(G(\tau,\lambda)\big)^2P_{0,\tau}{\pa\ln(\alpha_\tau)\over \pa\tau}\Big].
\end{aligned}
                                  \label{3.48}
\end{equation}
As follows from \eqref{3.3},
$$
\begin{aligned}
\frac{\partial}{\partial\lambda}\Big\{-2\lambda\T\Big[G(\tau,\lambda)P_{0,\tau}{\pa\ln(\alpha_\tau)\over \pa\tau}\Big]\Big\}
&=-2\T\Big[G(\tau,\lambda){\pa\ln(\alpha_\tau)\over \pa\tau}\Big]\\
&+2\lambda\T\Big[\big(G(\tau,\lambda)\big)^2P_{0,\tau}{\pa\ln(\alpha_\tau)\over \pa\tau}\Big].
\end{aligned}
$$
Therefore formula \eqref{3.48} takes its final form
\begin{equation}
\T\Big[{\pa G\over \pa \tau}(\tau,\lambda)\Big]=
\frac{\partial}{\partial\lambda}\Big\{-2\lambda\T\Big[G(\tau,\lambda)P_{0,\tau}{\pa\ln(\alpha_\tau)\over \pa\tau}\Big]\Big\}
+3\T\Big[\big(G(\tau,\lambda)\big)^2P_{0,\tau}{\pa\ln(\alpha_\tau)\over \pa\tau}\Big].
                                  \label{3.49}
\end{equation}

The similar formula for
$G_0$
is obtained in the same way:
\begin{equation}
\T\Big[{\pa G_0\over \pa \tau}(\tau,\lambda)\Big]=
\frac{\partial}{\partial\lambda}\Big\{-2\lambda\T\Big[G_0(\tau,\lambda)P_{0,\tau}{\pa\ln(\alpha_\tau)\over \pa\tau}\Big]\Big\}
+3\T\Big[\big(G_0(\tau,\lambda)\big)^2P_{0,\tau}{\pa\ln(\alpha_\tau)\over \pa\tau}\Big].
                                  \label{3.50}
\end{equation}
Take the difference of equations \eqref{3.49} and \eqref{3.50}. Taking the equality
$$
\big(G_0(\tau,\lambda)\big)^2P_{0,\tau}=\big(G(\tau,\lambda)\big)^2P_{0,\tau}
$$
into account, we obtain
\begin{equation}
\T\Big[{\pa (G-G_0)\over \pa \tau}(\tau,\lambda)\Big]=
{\pa\over \pa\lambda}\Big\{-2\lambda\T\Big[(G-G_0)(\tau,\lambda){\pa\ln(\alpha_\tau)\over \pa\tau}\Big]\Big\}.
                                  \label{3.51}
\end{equation}

Next, we multiply equation \eqref{3.51} by
 $\lambda^{-z/2}$
 and integrate with respect to
 $\lambda$
$$
\int_0^\infty\lambda^{-z/2}\,\T\Big[{\pa (G-G_0)\over \pa \tau}(\tau,\lambda)\Big]\,d\lambda=
\int_0^\infty\lambda^{-z/2}\,{\pa\over \pa\lambda}\Big\{-2\lambda\T\Big[(G-G_0)(\tau,\lambda){\pa\ln(\alpha_\tau)\over \pa\tau}\Big]\Big\}\,d\lambda.
$$
We transform the right-hand side with the help of integration by parts. Since
$\Re z\in (0,2)$,
the integrated term is equal to zero by \eqref{3.12} (with
$m_1=m_2=0$). In this way we obtain
$$
\int_0^\infty\lambda^{-z/2}\,\T\Big[{\pa (G-G_0)\over \pa \tau}(\tau,\lambda)\Big]\,d\lambda=
-z\int_0^\infty\lambda^{-z/2}\,\T\Big[(G-G_0)(\tau,\lambda){\pa\ln(\alpha_\tau)\over \pa\tau}\Big]\,d\lambda.
$$
Comparing this with \eqref{3.46}, we see that
\begin{equation}
\T\Big[{\pa H\over \pa \tau}\Big]=-z\gamma(z/2)\int_0^\infty\lambda^{-z/2}\,\T\Big[(G-G_0)(\tau,\lambda){\pa\ln(\alpha_\tau)\over \pa\tau}\Big]\,d\lambda.
                                  \label{3.52}
\end{equation}

By \eqref{3.20},
$$
H(\tau,z)=(\Lambda_{\alpha_\tau}+P_{0,\tau})^{-z}-(|D_{\alpha_\tau}|+P_{0,\tau})^{-z}.
$$
By \eqref{3.15} and \eqref{3.17},
$$
(\Lambda_{\alpha_\tau}+P_{0,\tau})^{-z}=\gamma(z/2)\int_0^\infty\lambda^{-z/2}G(\tau,\lambda)\,d\lambda,
$$
$$
(|D_{\alpha_\tau}|+P_{0,\tau})^{-z}=\gamma(z/2)\int_0^\infty\lambda^{-z/2}G_0(\tau,\lambda)\,d\lambda.
$$
Three last formulas imply
$$
H(\tau,z)=\gamma(z/2)\int_0^\infty\lambda^{-z/2}(G-G_0)(\tau,\lambda)\,d\lambda.
$$
Multiply this equality from the right by the operator of multiplication by the function
${\pa\ln(\alpha_\tau)\over \pa\tau}$.
The operator can be moved inside the integral since it is independent of
$\lambda$.
In this way we obtain
$$
H(\tau,z){\pa\ln(\alpha_\tau)\over \pa\tau}=\gamma(z/2)\int_0^\infty\lambda^{-z/2}(G-G_0)(\tau,\lambda){\pa\ln(\alpha_\tau)\over \pa\tau}\,d\lambda.
$$
Take the trace of both part. Again, the trace operator can be moved inside the integral and we get
$$
\T\Big[H(\tau,z){\pa\ln(\alpha_\tau)\over \pa\tau}\Big]
=\gamma(z/2)\int_0^\infty\lambda^{-z/2}\,\T\Big[(G-G_0)(\tau,\lambda){\pa\ln(\alpha_\tau)\over \pa\tau}\Big]\,d\lambda.
$$
The comparison of this formula with \eqref{3.52} gives
$$
\T\Big[{\pa H\over \pa \tau}(\tau,z)\Big]=-z\T\Big[H(\tau,z){\pa\ln(\alpha_\tau)\over \pa\tau}\Big].
$$
Together with \eqref{3.41}, this gives \eqref{3.39}.
\end{proof}

\subsection{Second variation of $\zeta_{\alpha_\tau}(z)$ with respect to $\tau$}
We assume
$l\ge 2$
in this subsection.
Repeating arguments from the proof of Lemma \ref{L3.5}, we prove that the right-hand side of \eqref{3.39} belongs to
$C^{l-1}( (-\ep,\ep), C^\infty(\C))$.
Then, differentiating equation \eqref{3.39}, we obtain the following expression for the second derivative.

\begin{lemma} \label{L3.7}
For every
$z\in\C$,
\begin{equation}
\begin{aligned}
\left.{\pa^2\big(\zeta_{\alpha_\tau}(z)\big)\over \pa\tau^2}\right|_{\tau=0}
&=-z\T\Big[{\pa H\over \pa\tau}(0,z) a^{-1}\beta\Big]\\
&+z\T\big[ \big((\Lambda_a+P_0)^{-z}-(|D_a|+P_0)^{-z}\big)a^{-2} \beta^2\big]\\
&-z\T\Big[\big((\Lambda_a+P_0)^{-z}-(|D_a|+P_0)^{-z}\big) a^{-2} \left.{\pa^2 \alpha_\tau\over \pa\tau^2}\right|_{\tau=0}\Big],
\end{aligned}
                                       \label{3.53}
\end{equation}
where
$\beta(\theta)=\frac{\partial\alpha}{\partial\tau}(\theta,0)$
is the direction of the variation
$\alpha_\tau$.

In more generality, for a 2-parametric deformation
$\alpha_{\tau,s}$,
$$
\begin{aligned}
\left.{\pa^2\big(\zeta_{\alpha_{\tau,s}}(z)\big)\over \pa\tau \pa s}\right|_{(\tau,s)=(0,0)}
&=-z\T\Big[{\pa H\over \pa s}(0,0,z) a^{-1}\beta\Big]\\
&+z\T\big[ \big((\Lambda_a+P_0)^{-z}-(|D_a|+P_0)^{-z}\big)a^{-2} \beta \gamma\big]\\
&-z\T\Big[\big((\Lambda_a+P_0)^{-z}-(|D_a|+P_0)^{-z}\big) a^{-2}
\left.{\pa^2 \alpha_{\tau,s}\over \pa\tau \pa s}\right|_{(\tau,s)=(0,0)}\Big],
\end{aligned}
$$
where
$\gamma=\left.{\pa\alpha_{0,s}\over \pa s}\right|_{s=0}$,
$\beta=\left.{\pa\alpha_{\tau,0}\over \pa \tau}\right|_{\tau=0}$.
\end{lemma}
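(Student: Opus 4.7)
The plan is simply to differentiate the first variation formula \eqref{3.39} once more with respect to $\tau$ and evaluate at $\tau=0$. From Lemma \ref{L3.6} we already have
$$
{\pa \zeta_{\alpha_\tau}(z)\over \pa \tau}=-z\,\T\Big[H(\tau,z)\,{\pa \ln(\alpha_\tau)\over \pa\tau}\Big]
$$
for every $z\in\C$, where both factors inside the trace are smooth families in $\tau$. Before differentiating, I would first verify that this right-hand side belongs to $C^{l-1}\big((-\ep,\ep),C^\infty(\C)\big)$; this is the same kind of argument used to prove Lemma \ref{L3.5}, combining the uniform trace-class type estimate \eqref{3.22} of Lemma \ref{L3.4} with the fact that $\tau\mapsto{\pa\ln(\alpha_\tau)/\pa\tau}$ is $C^{l-1}$ with values in $C^\infty(\S)$ (and hence, as a multiplication operator, in $\mathcal{L}(L^2(\S))$). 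This guarantees that one may differentiate under the trace.

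Next I apply the Leibniz rule to the product $H(\tau,z)\cdot{\pa\ln(\alpha_\tau)/\pa\tau}$ inside the trace, obtaining
\begin{equation*}
{\pa^2\zeta_{\alpha_\tau}(z)\over \pa\tau^2}
=-z\,\T\Big[{\pa H\over \pa\tau}(\tau,z)\,{\pa\ln(\alpha_\tau)\over \pa\tau}\Big]
-z\,\T\Big[H(\tau,z)\,{\pa^2\ln(\alpha_\tau)\over \pa\tau^2}\Big].
\end{equation*}
A direct computation gives
$$
{\pa\ln(\alpha_\tau)\over \pa\tau}\Big|_{\tau=0}=a^{-1}\beta,\qquad
{\pa^2\ln(\alpha_\tau)\over \pa\tau^2}\Big|_{\tau=0}=-a^{-2}\beta^2+a^{-1}{\pa^2\alpha_\tau\over \pa\tau^2}\Big|_{\tau=0}.
$$
Evaluating at $\tau=0$ and substituting $H(0,z)=(\Lambda_a+P_0)^{-z}-(|D_a|+P_0)^{-z}$ yields formula \eqref{3.53}: the first term reproduces the $\pa H/\pa\tau$ contribution; the $-a^{-2}\beta^2$ piece contributes the sign-flipped middle term with $\beta^2$; and the $a^{-1}\pa_\tau^2\alpha_\tau|_{\tau=0}$ piece contributes the last term.

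For the polarised two-parameter version, I would apply exactly the same reasoning to the deformation $\tau\mapsto\alpha_{\tau,s}$ (for fixed small $s$), so that \eqref{3.39} yields
$$
{\pa\zeta_{\alpha_{\tau,s}}(z)\over \pa \tau}=-z\,\T\Big[H_{s}(\tau,z)\,{\pa\ln(\alpha_{\tau,s})\over \pa\tau}\Big],
$$
and then differentiate in $s$ at $(\tau,s)=(0,0)$. The same Leibniz computation works, with the role of $\beta^2$ replaced by the symmetric product $\beta\gamma$ where $\gamma=\pa_s\alpha_{0,s}|_{s=0}$, and $\pa_\tau^2\alpha_\tau|_{\tau=0}$ replaced by $\pa_\tau\pa_s\alpha_{\tau,s}|_{(0,0)}$.

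The only delicate point is the justification of differentiation under the trace in the first step. This is not a new difficulty: it is handled exactly as in the proof of Lemma \ref{L3.5}, by expanding the trace in the Fourier basis $\{e^{in\theta}\}$, bounding the matrix entries of ${\pa H/\pa\tau}$ and of $H$ uniformly in $\tau$ on compact sets of $z\in\C$ using \eqref{3.22} (with two factors of $D$ absorbed to produce the $n^{-2}$ decay), and then interchanging sum and derivative. No additional analytic input is needed beyond Lemmas \ref{L3.4} and \ref{L3.6}.
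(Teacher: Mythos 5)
Your proposal follows exactly the route the paper takes: the paper's entire ``proof'' of Lemma~\ref{L3.7} is the preceding paragraph, which says that the right-hand side of \eqref{3.39} is $C^{l-1}$ in $\tau$ (by repeating the argument of Lemma~\ref{L3.5}) and that \eqref{3.53} is obtained by differentiating \eqref{3.39} once more. Your Leibniz step, the computation of $\pa_\tau\ln\alpha_\tau|_{\tau=0}=a^{-1}\beta$ and $\pa_\tau^2\ln\alpha_\tau|_{\tau=0}=-a^{-2}\beta^2+a^{-1}\pa_\tau^2\alpha_\tau|_{\tau=0}$, and the justification of differentiating under the trace via \eqref{3.22} are all correct and are precisely what the paper leaves implicit.

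However, you should flag one discrepancy instead of silently asserting agreement with \eqref{3.53}. Your computation correctly produces the factor $a^{-1}$ in front of $\pa_\tau^2\alpha_\tau|_{\tau=0}$ in the third term, whereas \eqref{3.53} as printed carries $a^{-2}$ there (and the same $a^{-2}$ appears in the polarised two-parameter version in front of $\pa_\tau\pa_s\alpha_{\tau,s}|_{(0,0)}$). Since
$$
\frac{\pa^2}{\pa\tau^2}\ln\alpha_\tau
=-\alpha_\tau^{-2}\Big(\frac{\pa\alpha_\tau}{\pa\tau}\Big)^2+\alpha_\tau^{-1}\frac{\pa^2\alpha_\tau}{\pa\tau^2},
$$
the correct coefficient is $a^{-1}$, and \eqref{3.53} contains a typo. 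It is harmless for the paper's only application (Proposition~\ref{P3.1} takes $a=\mathbf 1$, where $a^{-1}=a^{-2}$), but your proof should note that you are obtaining the corrected formula rather than claiming it ``yields formula \eqref{3.53}'' verbatim.
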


\subsection{Application: behavior near $a={\mathbf 1}$}

Hereafter,
$\{{\hat u}_k\}_{k\in\Z}$
are the Fourier coefficients of a function
$u\in L^2(\S)$,
i.e.,
$u=\sum_{k\in\Z}{\hat u}_k\,e^{ik\theta}$.

We have the following result.

\begin{proposition} \label{P3.1}
Let
$\alpha_\tau$
be a
$C^2$-smooth variation of the function
$a={\mathbf 1}$
(the function identically equal to 1). Then, for every
$z\in\C$,
\begin{equation}
\left.{\pa \big(\zeta_{\alpha_\tau}(z)\big)\over \pa \tau}\right|_{\tau=0}=0,
                                  \label{3.54}
\end{equation}
\begin{equation}
\left.{\pa^2 \big(\zeta_{\alpha_\tau}(z)\big)\over \pa \tau^2}\right|_{\tau=0}=
4z\sum_{(n,p)\in \N^2\atop p>0,\ n>0}{n^{-z}-p^{-z}\over p^2-n^2}\,pn\,|\hat\beta_{p+n}|^2+2z^2\sum_{n>0}|n|^{-z}\,|\hat\beta_{2n}|^2,
                                  \label{3.55}
\end{equation}
\end{proposition}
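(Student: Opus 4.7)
The key observation is that at $a=\mathbf{1}$ the operators $\Lambda$ and $|D|$ agree on every Fourier mode $e^{in\theta}$ (both act as multiplication by $|n|$), so $\Lambda+P_0=|D|+P_0$ and consequently $H(0,z)=0$. Formula \eqref{3.40} of Lemma \ref{L3.6} therefore yields \eqref{3.54} immediately, since $H(0,z)$ appears as a factor on its right-hand side.

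For \eqref{3.55} I apply Lemma \ref{L3.7}. In \eqref{3.53} the second and third summands both carry the factor $(\Lambda_a+P_0)^{-z}-(|D_a|+P_0)^{-z}=H(0,z)=0$, so only the first survives:
$$
\left.{\pa^2\zeta_{\alpha_\tau}(z)\over\pa\tau^2}\right|_{\tau=0}=-z\,\T\Big[{\pa H\over\pa\tau}(0,z)\,\beta\Big].
$$
To evaluate $\frac{\pa H}{\pa\tau}(0,z)$ I use \eqref{3.45}. Since $G(0,\lambda)=G_0(0,\lambda)=R_\lambda:=(D^2+P_0+\lambda)^{-1}$ and the $\frac{\pa P_{0,\tau}}{\pa\tau}|_{0}$ contributions cancel between \eqref{3.3} and its analogue for $G_0$, one gets $\frac{\pa(G-G_0)}{\pa\tau}(0,\lambda)=-R_\lambda\Delta'_0 R_\lambda$ with $\Delta'_0:=\frac{\pa\Delta_\tau}{\pa\tau}|_{\tau=0}$. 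Writing $\Delta_\tau=\alpha_\tau^{1/2}D\H[\alpha_\tau,\H]D\alpha_\tau^{1/2}$ and using $[\mathbf{1},\H]=0$, only the derivative of the inner commutator contributes, so $\Delta'_0=D\H[\beta,\H]D$. Therefore
$$
\left.{\pa^2\zeta_{\alpha_\tau}(z)\over\pa\tau^2}\right|_{\tau=0}=z\,\gamma(z/2)\int_0^\infty\lambda^{-z/2}\,\T\big[R_\lambda D\H[\beta,\H]D R_\lambda\,\beta\big]\,d\lambda.
$$

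The remaining step is a direct Fourier computation. In the basis $\{e^{in\theta}\}$ the operators $R_\lambda$ and $D$ are diagonal, while $[\beta,\H]e^{in\theta}=\sum_k\hat\beta_k\big(\mathrm{sgn}(n)-\mathrm{sgn}(n+k)\big)e^{i(n+k)\theta}$. Computing the diagonal matrix elements reduces the trace to a double sum over $(n,p)\in\Z^2$ whose summand vanishes unless $n$ and $n+p$ have opposite signs; in that case the combinatorial factor $(n+p)\big(\mathrm{sgn}(n+p)-\mathrm{sgn}(n)\big)|n|$ simplifies to $-2n(n+p)$. Re-indexing the two sign cases by the absolute values $m=|n|,\ q=|n+p|>0$ and using $\hat\beta_{-k}=\overline{\hat\beta_k}$ (since $\beta$ is real) collapses both contributions to
$$
\T\big[R_\lambda D\H[\beta,\H]D R_\lambda\,\beta\big]=\sum_{m,q>0}\frac{4mq\,|\hat\beta_{m+q}|^2}{(m^2+\lambda)(q^2+\lambda)}.
$$
Finally I evaluate the $\lambda$-integral using $\int_0^\infty\lambda^{-z/2}(m^2+\lambda)^{-1}d\lambda=m^{-z}/\gamma(z/2)$ (which follows from \eqref{3.16}). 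For $m\neq q$, partial fractions give $\gamma(z/2)\int_0^\infty\lambda^{-z/2}((m^2+\lambda)(q^2+\lambda))^{-1}d\lambda=(m^{-z}-q^{-z})/(q^2-m^2)$, yielding the first sum of \eqref{3.55}; for the diagonal $m=q$, differentiating the identity in $m^2$ (or taking an L'H\^opital limit) gives $\gamma(z/2)\int_0^\infty\lambda^{-z/2}(m^2+\lambda)^{-2}d\lambda=\tfrac{z}{2}m^{-z-2}$, producing exactly $2z^2\sum_{m>0}m^{-z}|\hat\beta_{2m}|^2$. The only real obstacle is the careful sign/index bookkeeping in the Fourier trace and the separation of the diagonal $m=q$ contribution from the off-diagonal part.
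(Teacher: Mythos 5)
Your proposal is correct and follows essentially the same strategy as the paper: using Lemmas \ref{L3.6} and \ref{L3.7} to reduce to $-z\,\T\big[\tfrac{\pa H}{\pa\tau}(0,z)\beta\big]$, expressing $\tfrac{\pa H}{\pa\tau}(0,z)$ via the resolvent integral, and computing the trace in the Fourier basis followed by the Euler-type $\lambda$-integrals (3.59)--(3.60). The only cosmetic difference is that the paper first computes the matrix elements $\langle \tfrac{\pa H}{\pa\tau}(0,z)e^{in\theta},e^{ip\theta}\rangle$ in closed form (formula (3.61)) before summing, while you carry the $\lambda$-integral along and perform the trace first, with $\Delta'_0=D\H[\beta,\H]D$ playing the role of the paper's $\Lambda\beta\Lambda-D\beta D$ (these coincide since $D\H^2\beta D=D\beta D$).
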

where
$\beta(\theta)=\left.\frac{\partial\alpha_\tau(\theta)}{\partial\tau}\right|_{\tau=0}$.

\begin{proof}
The first variation formula \eqref{3.40} gives \eqref{3.54}.
Indeed,
$\Lambda_a=|D_a|$
for
$a={\mathbf 1}$.

The second variation formula \eqref{3.53} gives
\begin{equation}
\left.{\pa^2 \big(\zeta_{\alpha_\tau}(z)\big)\over \pa \tau^2}\right|_{\tau=0}
=-z\T\Big[{\pa H\over \pa\tau}(0,z)\beta\Big].
                       \label{3.56}
\end{equation}

We use the trigonometric  basis
$\{(2\pi)^{-1/2}e^{in\theta}\}_{n\in \Z}$
to compute the trace
$\T\Big[{\pa H\over \pa\tau}(0,z)\beta\Big]$:
$$
\T\Big[{\pa H\over \pa\tau}(0,z) \beta\Big]=(2\pi)^{-1}\sum_{p\in  \Z}\Big\langle {\pa H\over \pa\tau}(0,z) \beta e^{ip\theta},e^{ip\theta}\Big\rangle.
$$
Substituting
$\beta=\sum_{n\in\Z}{\hat\beta}_ne^{in\theta}$,
we obtain
\begin{equation}
\T\Big[{\pa H\over \pa\tau}(0,z) \beta\Big]
=(2\pi)^{-1}\sum_{n,p\in\Z}\hat\beta_{n-p}\Big\langle{\pa H\over \pa\tau}(0,z) e^{in\theta},e^{ip\theta}\Big\rangle.
                       \label{3.57}
\end{equation}

We have thus to compute
$\Big\langle {\pa H\over \pa\tau}(0,z)e^{in\theta},e^{ip\theta}\rangle$.
We reduce the computation to the case
$\Re z \in (0,1)$
by holomorphy in the
$z$-variable.

With the help of the definition \eqref{3.20} of the operator
$H$,
Formulas \eqref{3.18} and \eqref{3.19} give
$$
{\pa H\over \pa\tau}(\tau,z)=-\gamma(\frac{z}{2})\!\int\limits_0^{\infty}\!\lambda^{-\frac{z}{2}}\Big(\!
G(\tau,\lambda) {\pa(\Lambda_{\alpha_{\tau}}^2\!+\!P_{0,\tau})\over \pa \tau}G(\tau,\lambda)
-G_0(\tau,\lambda)  {\pa( D_{\alpha_{\tau}}^2\!+\!P_{0,\tau})\over \pa \tau}G_0(\tau,\lambda)\!\Big)d\lambda.
$$
Setting
$\tau=0$
here and using the equalities
$$
G(0,\lambda)=G_0(0,\lambda)=(\Lambda^2+ P_{e_0}+\lambda)^{-1},
$$
we obtain
$$
{\pa H\over \pa\tau}(0,z)=-\gamma(z/2)\int_0^{\infty}\lambda^{-z/2}(\Lambda^2+ P_{e_0}+\lambda)^{-1}
\left.{\pa(\Lambda_{\alpha_{\tau}}^2-D_{\alpha_\tau}^2)\over \pa \tau}\right|_{\tau=0}(\Lambda^2+ P_{e_0}+\lambda)^{-1}\,d\lambda,
$$
where
$P_{e_0}$
is the orthogonal projection onto the line spanned by
$e_0={1\over \sqrt{2\pi}}{\mathbf 1}$.
Then
\begin{eqnarray*}
{\pa(\Lambda_{\alpha_{\tau}}^2-D_{\alpha_\tau}^2)\over \pa \tau}&=&{1\over 2}{\pa \ln(\alpha_\tau)\over \pa\tau}(\Lambda_{\alpha_\tau}^2-D_{\alpha_\tau}^2)
+{1\over 2}(\Lambda_{\alpha_\tau}^2-D_{\alpha_\tau}^2){\pa \ln(\alpha_\tau)\over \pa\tau}\\
&&+\Lambda_{\alpha_\tau}{\pa \ln(\alpha_\tau)\over \pa\tau}\Lambda_{\alpha_\tau}-D_{\alpha_\tau}{\pa \ln(\alpha_\tau)\over \pa\tau}D_{\alpha_\tau}.
\end{eqnarray*}
At $\tau=0$, this becomes
$$
\left.{\pa(\Lambda_{\alpha_{\tau}}^2-D_{\alpha_\tau}^2)\over \pa \tau}\right|_{\tau=0}=\Lambda \beta\Lambda-D \beta D.
$$
Hence
$$
{\pa H\over \pa\tau}(0,z)=-\gamma(z/2)\int_0^{\infty}\lambda^{-z/2}(\Lambda^2+P_{e_0}+\lambda)^{-1}
(\Lambda \beta\Lambda-D \beta D)(\Lambda^2+P_{e_0}+\lambda)^{-1}\,d\lambda.
$$

With the help of the last formula, we obtain
$$
\begin{aligned}
\Big\langle {\pa H\over \pa\tau}&(0,z)e^{in\theta},e^{ip\theta}\Big\rangle=\\
&=-\gamma(z/2)\int_0^{\infty}\lambda^{-z/2}\big\langle
(\Lambda \beta\Lambda-D \beta D)(\Lambda^2+P_{e_0}+\lambda)^{-1} e^{in\theta},(\Lambda^2+P_{e_0}+\lambda)^{-1}e^{ip\theta}\big\rangle\,d\lambda.
\end{aligned}
$$
After elementary calculations, this becomes
\begin{equation}
\Big\langle {\pa H\over \pa\tau}(0,z)e^{in\theta},e^{ip\theta}\Big\rangle=
\left\{\begin{array}{l}
0\ \textrm{if either}\ n=0\ \textrm{or}\ p=0,\\
[5pt]
-\gamma(z/2)\int_0^{\infty}\lambda^{-z/2}(p^2+\lambda)^{-1}(n^2+\lambda)^{-1}\,d\lambda\times\\
[5pt]
\quad\times 2\pi(|np|-np)\hat \beta_{p-n}\quad \textrm{otherwise}.
\end{array}\right.
                                       \label{3.58}
\end{equation}

For positive reals
$x$
and
$y\ (x\not= y)$,
$$
(x+\lambda)^{-1}(y+\lambda)^{-1}={1\over y-x}\big((x+\lambda)^{-1}-(y+\lambda)^{-1}\big).
$$
With the help of \eqref{3.16}, this gives
\begin{equation}
-\gamma(z/2)\int_0^{\infty}\lambda^{-z/2}(x+\lambda)^{-1}(y+\lambda)^{-1}\,d\lambda={x^{-z/2}-y^{-z/2}\over x-y}.
                                       \label{3.59}
\end{equation}
In particular, when
$x\to y$,
\begin{equation}
-\gamma(z/2)\int_0^{\infty}\lambda^{-z/2}(y+\lambda)^{-2}\,d\lambda=-\frac{1}{2}\,zy^{-z/2-1}.
                                      \label{3.60}
\end{equation}

Combining \eqref{3.58}--\eqref{3.60}, we obtain
\begin{equation}
\Big\langle {\pa H\over \pa\tau}(0,z)e^{in\theta},e^{ip\theta}\Big\rangle
=\left\lbrace
\begin{array}{l}
0\quad \textrm{if}\ np\ge0,\\
[5pt]
-4\pi\,np\,\frac{\textstyle |n|^{-z}-|p|^{-z}}{\textstyle n^2-p^2}\hat\beta_{p-n}\quad \textrm{if}\ np<0\ \textrm{and}\ n\not=-p,\\
[5pt]
-2\pi\,z|p|^{-z}\hat\beta_{2p}\quad \textrm{if}\ n=-p\not=0.
\end{array}
\right.
                                 \label{3.61}
\end{equation}
The formula is valid for all $z\in \C$ since right-hand sides are entire functions.

We substitute \eqref{3.61} into \eqref{3.57} and use that
$\hat\beta_k=\overline{\hat\beta_{-k}}$
($\beta$
is a real function)
$$
\T\Big[{\pa H\over \pa\tau}(0,z) \beta\Big]
=-2\sum\limits_{np<0,\ n\neq -p}np\,\frac{|n|^{-z}-|p|^{-z}}{n^2-p^2}\,|\hat\beta_{n-p}|^2
-z\sum\limits_{p\neq0}|p|^{-z}\,|\hat\beta_{2p}|^2.
$$
After the change
$p:=-p$
of the summation index in the first sum, this becomes
$$
\T\Big[{\pa H\over \pa\tau}(0,z) \beta\Big]
=4\sum\limits_{n>0,\ p>0}np\,\frac{|n|^{-z}-|p|^{-z}}{n^2-p^2}\,|\hat\beta_{n+p}|^2
-2z\sum\limits_{n>0}|n|^{-z}\,|\hat\beta_{2n}|^2.
$$
Finally, substituting this expression into \eqref{3.56}, we obtain \eqref{3.55}.
\end{proof}

\section{Proof of Theorem \ref{Th1.2}}

The proof of Theorem \ref{Th1.3} is postponed to Section 6. Here, assuming Theorem \ref{Th1.3} to be valid, we prove Theorem \ref{Th1.2}.
The proof of the theorem is based on the first variation formula applied to the deformation of Theorem \ref{Th1.3}. We start with some important preliminaries that, besides the proof of Theorem \ref{Th1.2}, will play a key role in the construction of the deformation of Theorem \ref{Th1.3}..

\subsection{An alternative form of the first variation formula.}
Let a deformation
$\alpha_\tau\ (-\ep<\tau<\ep)$
of a positive function
$a\in C^\infty(\S)$
satisfy hypotheses of Lemma \ref{L2.2} with
$l=\infty$.
Differentiating equation \eqref{1.7} with respect to
$\tau$,
we obtain
\begin{equation}
\int_{\S}{\pa\alpha_\tau^{-1}\over\pa\tau}\,d\theta=-\int_{\S}\alpha_\tau^{-2}\,{\pa\alpha_\tau\over\pa\tau}\,d\theta=0.
                                          \label{4.1}
\end{equation}
Let us define the family of functions
$g_\tau\in C^\infty((-\ep,\ep),C^\infty(\S))$
by
\begin{equation}
\alpha_\tau g_\tau'-g_\tau\alpha_\tau'={\pa\alpha_\tau\over\pa\tau},\quad \int_{\S}g_\tau=0.
                                       \label{4.2}
\end{equation}
In other words
$\big({g_\tau\over \alpha_\tau}\big)'=-{\pa\alpha_\tau^{-1}\over\pa\tau}$
and
$\int_{\S}g_\tau=0$.
Such a family exists and is unique due to \eqref{4.1}. We also denote
\begin{equation}
g=\left.{g_\tau}\right|_{\tau=0},\quad \beta=\left.{\pa\alpha_\tau\over \pa \tau}\right|_{\tau=0}=ag'-a'g.
                                       \label{4.3}
\end{equation}

Let
$P_0:L^2({\mathbb S})\rightarrow L^2({\mathbb S})$
be the orthogonal projection onto the one-dimensional subspace spanned by the vector
$\phi_0=(2\pi)^{-1/2}a^{-1/2}$
(compare with \eqref{2.2}). We emphasize that
$P_0$
depends on the function
$a$
although the dependance is not designated explicitly.

\begin{theorem} \label{Th4.1}
Given a deformation
$\alpha_\tau$
of a positive function
$a\in C^\infty(\S)$
satisfying hypotheses of Lemma \ref{L2.2} with
$l=\infty$,
let the function
$g\in C^\infty(\S)$
be defined by \eqref{4.3}. Then, for every
$z\in \C$,
\begin{equation}
\left.{\pa \big(\zeta_{\alpha_\tau}(z)\big)\over \pa \tau}\right|_{\tau=0}
=-iz\,\T\big[(\Lambda_a+P_0)^{-z+1}(I-P_0)a^{-1/2}[\H,g]a^{-1/2}\big],
                                          \label{4.4}
\end{equation}
where
$[\H,g]$
is the commutator of the Hilbert transform
$\H$
and the operator of multiplication by the function
$g$.
In the case when
$g=i\H a$,
the formula simplifies to the following one:
\begin{equation}
\left.{\pa \big(\zeta_{\alpha_\tau}(z)\big)\over \pa \tau}\right|_{\tau=0}=z\,\T\big[(\Lambda_a+P_0)^{-z-1}(\Lambda_a^2-D_a^2)\big].
                                  \label{4.5}
\end{equation}
\end{theorem}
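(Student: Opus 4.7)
My plan is to start from Lemma \ref{L3.6}'s first-variation formula
$$
\left.\frac{\pa\zeta_{\alpha_\tau}(z)}{\pa\tau}\right|_{\tau=0}
=-z\,\T\big[((\Lambda_a+P_0)^{-z}-(|D_a|+P_0)^{-z})\,a^{-1}\beta\big],
$$
and to reshape the right-hand side into \eqref{4.4}. First I would eliminate the $|D_a|$-contribution: in the orthonormal basis $\{\phi_{n,0}\}$ of \eqref{2.2} diagonalizing $|D_a|$, the density $|\phi_{n,0}|^2=(2\pi)^{-1}a^{-1}$ is independent of $n$, so for $\Re z>1$ one has $\T[(|D_a|+P_0)^{-z}a^{-1}\beta]=\big(\sum_{n\in\Z}\max(|n|,1)^{-z}\big)\cdot\tfrac{1}{2\pi}\int_\S a^{-2}\beta\,d\theta$. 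From $\beta=ag'-a'g$ one gets $a^{-2}\beta=(g/a)'$, which integrates to zero by periodicity, and this vanishing persists by analytic continuation.

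Using the elementary simplification $(\Lambda_a+P_0)^{-z+1}(I-P_0)=(\Lambda_a+P_0)^{-z}\Lambda_a$ and the self-adjointness of $\Lambda_a$, the identity \eqref{4.4} reduces to showing, for each Steklov eigenvector $\Lambda_a\Psi_k=\lambda_k\Psi_k$, that
$$
\langle a^{-1}\beta\,\Psi_k,\Psi_k\rangle
=i\lambda_k\,\langle a^{-1/2}[\H,g]a^{-1/2}\Psi_k,\Psi_k\rangle.
$$
The case $k=0$ is trivial since $\lambda_0=0$ (and separately $\int a^{-2}\beta=0$). For $k\ge1$, I would substitute $v_k=a^{1/2}\Psi_k$, which transforms $\Lambda_a\Psi_k=\lambda_k\Psi_k$ into $\Lambda v_k=\lambda_k a^{-1}v_k$. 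Applying $\H$ to both sides and using $\H^2=I-P_{\mathbf 1}$ ($P_{\mathbf 1}$ the projector onto constants) together with $P_{\mathbf 1}Dv_k=0$ yields the key pointwise identity
$$
v_k'=i\lambda_k\,\H(a^{-1}v_k).
$$
Writing $w_k=a^{-1}v_k$ and $W_k=\H w_k$, an integration by parts with $a^{-2}\beta=(g/a)'$ combined with the relation above gives $\langle a^{-1}\beta\Psi_k,\Psi_k\rangle=-2\lambda_k\int_\S g\,\mathrm{Im}(w_k\bar W_k)\,d\theta$, while a direct expansion of $[\H,g]$ using $\H^*=\H$ gives $\langle[\H,g]w_k,w_k\rangle=2i\int_\S g\,\mathrm{Im}(w_k\bar W_k)\,d\theta$. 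Multiplying the latter by $i\lambda_k$ matches the former, establishing the per-eigenvector identity and hence \eqref{4.4}.

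For \eqref{4.5}, I would substitute $g=i\H a$ into \eqref{4.4}, compute $[\H,g]=i\bigl((I-P_{\mathbf 1})a-\H a\H\bigr)$, and then use the identity $\Lambda_a^2-D_a^2=a^{1/2}D\H[a,\H]Da^{1/2}$ from \eqref{2.1} (together with $D\H=\H D=\Lambda$, $D^2=\Lambda^2$ and $DP_{\mathbf 1}=0$) to repackage the resulting trace as $z\T[(\Lambda_a+P_0)^{-z-1}(\Lambda_a^2-D_a^2)]$; the shift of exponent from $-z+1$ to $-z-1$ comes from absorbing the extra factor of $\Lambda_a^2$ produced by that identity. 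The main obstacle is the per-eigenvector computation: extracting the Steklov relation $v_k'=i\lambda_k\H(a^{-1}v_k)$ from $\Lambda_a\Psi_k=\lambda_k\Psi_k$ and then carefully tracking the conjugations by $a^{\pm1/2}$ and the signs so that both sides of the per-eigenvector identity collapse to the same oscillatory integral $\int_\S g\,\mathrm{Im}(w_k\bar W_k)\,d\theta$.
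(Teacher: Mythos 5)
Your derivation of \eqref{4.4} is correct, but it takes a genuinely different route from the paper's. The paper works at the operator level: it writes $\beta=i(aDg-gDa)$, uses the identities $a^{1/2}D=\Lambda_a a^{-1/2}\H$ and $Da^{1/2}=\H a^{-1/2}\Lambda_a$, and then shuffles factors with the cyclicity of the trace to make the commutator $[\H,g]$ appear. You instead reduce \eqref{4.4} to the per-eigenvector scalar identity $\langle a^{-1}\beta\,\Psi_k,\Psi_k\rangle=i\lambda_k\langle a^{-1/2}[\H,g]a^{-1/2}\Psi_k,\Psi_k\rangle$, prove the pointwise ODE $v_k'=i\lambda_k\H(a^{-1}v_k)$ for $v_k=a^{1/2}\Psi_k$ (using $\H^2=I-P_{\mathbf 1}$ and $P_{\mathbf 1}Dv_k=0$), and then collapse both sides to $\int_\S g\,\mathrm{Im}(w_k\overline{W_k})\,d\theta$ via integration by parts and $\H^*=\H$. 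Both arguments prove the same thing; yours is more concrete and exposes the pointwise structure eigenvector by eigenvector, while the paper's avoids any basis and reads as a purely algebraic manipulation with the Hilbert-transform relations. Your elimination of the $|D_a|$-term and the treatment of $k=0$ match the paper.

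For \eqref{4.5} your outline is essentially the paper's: both hinge on the identity $\Lambda[\H,\H a]\Lambda=\Lambda a\Lambda-DaD$ (Lemma \ref{L5.1}, or equivalently the factorization of $\Lambda_a^2-D_a^2$ from \eqref{2.1}), together with the shift $(\Lambda_a+P_0)^{-z+1}(I-P_0)=(\Lambda_a+P_0)^{-z-1}\Lambda_a^2$. One small slip worth flagging: your intermediate formula $[\H,\H a]=(I-P_{\mathbf 1})a-\H a\H$ omits the finite-rank correction; the exact identity (from \eqref{5.8} with $f=a$) is $[\H,\H a]=\H[\H,a]+F_0\,a-\hat a_0 F_0 = a-\H a\H-\hat a_0 F_0$, and $(I-P_{\mathbf 1})a$ differs from $a-\hat a_0 F_0$ by $\hat a_0 F_0 - F_0 a$. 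Since you immediately sandwich by $\Lambda$ (or equivalently by $\Lambda_a$ after the $a^{\pm1/2}$ conjugations) and $F_0\Lambda=\Lambda F_0=0$, the error term drops out and \eqref{4.5} is unaffected; still, the uncorrected commutator identity would be false as stated.
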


See the remark after Theorem \ref{Th1.3} concerning the value of the derivative
${\pa \big(\zeta_{\alpha_\tau}(z)\big)\over \pa \tau}$
at the pole
$z=1$.
Right-hand sides of formulas \eqref{4.4} and \eqref{4.5} are entire functions of
$z$
since
$[\H,g]$
and
$\Lambda_a^2-D_a^2=a^{1/2}D[\H,a]\H Da^{1/2}$
are smoothing operators.

The proof of Theorem \ref{Th4.1} is postponed to Section \ref{S5}.

\subsection{The good sign.}
The example
$g=i\H a$
is a right choice since we have the following interesting properties.

\begin{lemma} \label{L4.1}
Let a deformation
$\alpha_\tau$
of a positive function
$a\in C^\infty(\S)$
satisfy hypotheses of Theorem \ref{Th4.1}. Then
\begin{equation}
\T\big[(\Lambda_a+P_0)^{s-1}(\Lambda_a^2- D_a^2)\big]\ge0\quad\textrm{when}\ s>0,
                               \label{4.6}
\end{equation}
\begin{equation}
\T\big[(\Lambda_a+P_0)^{s-1}(\Lambda_a^2- D_a^2)\big]\le0\quad\textrm{when}\ s< 0.
                                        \label{4.7}
\end{equation}
Additionally, the equality
$$
\T\big[(\Lambda_a+P_0)^{s-1}(\Lambda_a^2- D_a^2)\big]=0
$$
holds for some
$0\neq s\in\R$
if and only if
$a$
is conformally equivalent to the constant-valued function
${\mathbf 1}$.
\end{lemma}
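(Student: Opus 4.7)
Let $\{\Psi_k\}_{k\ge 0}$ be an orthonormal basis of $L^2(\S)$ consisting of $\Lambda_a$-eigenvectors with $\Lambda_a\Psi_k = \lambda_k\Psi_k$ (so $\Psi_0 = \phi_0$ since $\Lambda_a\phi_0 = 0$), and let $\{\phi_n\}_{n\in\Z}$ be the $D_a$-eigenbasis of \eqref{2.2}. Setting $\mu_0 = 1$, $\mu_k = \lambda_k$ for $k\ge 1$, and $M_{nk}:=|\langle\Psi_k,\phi_n\rangle|^2$, the relation $(\Lambda_a+P_0)^2 = \Lambda_a^2 + P_0$ together with the vanishing of the $k=0$ and $n=0$ contributions reduces the trace to
\[
T^A_s := \T\!\bigl[(\Lambda_a+P_0)^{s-1}(\Lambda_a^2-D_a^2)\bigr] = \sum_{k\ge 1,\, n\ne 0}\lambda_k^{s-1}(\lambda_k^2 - n^2)\,M_{nk},
\]
where $(M_{nk})_{n\ne 0,\,k\ge 1}$ is doubly stochastic: $\sum_{n\ne 0}M_{nk}=1=\sum_{k\ge 1}M_{nk}$.

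For $s\in(-1,1)$, the plan is to invoke the resolvent integral representation \eqref{3.15}, together with the identity \eqref{3.13} and the easily-checked fact $(G-G_0)P_0 = 0$ (both $G$ and $G_0$ act by $(1+\lambda)^{-1}$ on $\phi_0$), to derive
\[
T^A_s = \frac{\sin(\pi(1-s)/2)}{\pi}\int_0^\infty \lambda^{-(1-s)/2}\,J(\lambda)\,d\lambda,\qquad J(\lambda):=\sum_{k\ge 1,\,n\ne 0}\frac{\lambda_k^2-n^2}{\lambda_k^2+\lambda}\,M_{nk}.
\]
The prefactor is strictly positive on $(-1,1)$. The vanishing $T^A_0 = 0$ follows independently from the Edward--Wu equality $\zeta'_a(0)=2\zeta'_R(0)$: since $\zeta'_a(0)$ is independent of $a$, its $\tau$-derivative along any smooth deformation vanishes, and differentiating \eqref{4.5} in $z$ at $z=0$ yields $T^A_0 = 0$. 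This identifies the weighted integral $\int_0^\infty \lambda^{-1/2}J(\lambda)\,d\lambda$ as vanishing, so one must understand how the reweighting $\lambda^{-(1-s)/2}$ shifts mass as $s$ crosses $0$.

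The sign assertions \eqref{4.6}--\eqref{4.7} will follow once we establish that $s\mapsto T^A_s$ is non-decreasing in $s$. I would attempt this by showing
\[
\frac{dT^A_s}{ds}=\T\!\bigl[(\Lambda_a+P_0)^{s-1}\log(\Lambda_a+P_0)(\Lambda_a^2-D_a^2)\bigr]\ge 0,
\]
whose spectral expansion reads $\sum_{k,n}\log(\lambda_k)\lambda_k^{s-1}(\lambda_k^2-n^2)M_{nk}$. The key is to exploit the Weyl-type pairing from Lemmas \ref{L2.1} and \ref{L2.3}: the matrix $M_{nk}$ is concentrated near the diagonal $|n|\sim\lfloor(k+1)/2\rfloor$ up to rapidly decreasing errors, and on this diagonal the factors $\log(\lambda_k)$ and $\lambda_k^2-n^2$ are correlated in the ``right'' way (both increasing with $k$ past the threshold $\lambda_k\sim 1$). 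Combined with the doubly-stochastic structure, a Schur--Horn-style rearrangement argument should yield the non-negativity.

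This monotonicity is the main obstacle, since the individual summands carry mixed signs and the convergence of the rearranged series is delicate; one will need careful control of the diagonal vs.\ off-diagonal decompositions using the $O(k^{-\infty})$ estimates of Section 2. The equality characterization then follows by tracking strictness: if $T^A_s=0$ for some $s\ne 0$, strict monotonicity away from the limiting configuration forces $J(\lambda)\equiv 0$, hence (by the spectral expansion of $J$ as a sum of rationals in $\lambda$ with distinct poles) $(\lambda_k^2-n^2)M_{nk}=0$ for all $(n,k)$; double stochasticity then forces $M$ to be supported on pairs with $\lambda_k=|n|$, so $\mathrm{Sp}(\Lambda_a)=\mathrm{Sp}(|D_a|)$, and the Steklov isospectral rigidity from \cite{JS} identifies $a$ as conformally equivalent to $\mathbf{1}$.
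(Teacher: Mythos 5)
Your proposal rests on two unproved claims, and the paper's argument does not use either of them, so there is a genuine gap rather than merely a different route.

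\textbf{The monotonicity claim is unsupported.} Your plan hinges on showing that $s\mapsto T^A_s$ is non-decreasing, which combined with $T^A_0=0$ would give the sign dichotomy. But the argument you give for $\frac{dT^A_s}{ds}\ge 0$ is an appeal to ``diagonal concentration,'' ``correlation in the right way,'' and a ``Schur--Horn-style rearrangement'' that you never actually carry out, while simultaneously acknowledging that the summands carry mixed signs and the rearrangement is delicate. The $O(k^{-\infty})$ estimates from Lemmas \ref{L2.1} and \ref{L2.3} control the off-diagonal tails but say nothing about the sign of the near-diagonal contributions (recall that $\lambda_1\le 1$ by Weinstock, so $\log\lambda_1\le 0$, which cuts against the heuristic). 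This is precisely the step the paper avoids: instead of a global monotonicity statement, it establishes \emph{per-$n$ inequalities} of the form
\[
\langle(\Lambda_a+P_0)^{s+1}\phi_n,\phi_n\rangle \;\ge\; n^2\,\langle(\Lambda_a+P_0)^{s-1}\phi_n,\phi_n\rangle\qquad(n\ne 0),
\]
using only the elementary Jensen-type Lemma \ref{L5.2} together with the single known bound $\langle\Lambda_a\phi_n,\phi_n\rangle\ge|n|$ from \eqref{5.16}--\eqref{5.17}. For $|s|<1$ (Parts 5 and 6) this requires a nontrivial choice of test vectors $u=\delta_n^{-1}a^{-1/2}\H a^{1/2}\phi_n$, $v=\mathrm{sgn}(n)\phi_n$ in Lemma \ref{L5.2}, not just $u=v=\phi_n$. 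Your proposal has no analogue of this device.

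\textbf{The equality characterization does not follow.} You infer from $T^A_s=0$ for a single $s\ne 0$ that $J(\lambda)\equiv 0$, but a single vanishing weighted integral $\int_0^\infty\lambda^{-(1-s)/2}J(\lambda)\,d\lambda=0$ does not force $J$ to vanish identically, since $J$ changes sign. Even granting $J\equiv 0$, the residue of $J$ at $\lambda=-\lambda_k^2$ is $\sum_{n\ne 0}(\lambda_k^2-n^2)M_{nk}$, whose vanishing only gives $\lambda_k^2=\sum_n n^2 M_{nk}$, not the pointwise condition $(\lambda_k^2-n^2)M_{nk}=0$ you claim. In the paper, equality forces equality in \eqref{5.16} at $n=1$, i.e.\ $\langle\Lambda_a\phi_1,\phi_1\rangle=1$, and the rigidity is then \cite[Lemma 2.5]{JS2}; this is a much sharper and direct mechanism than passing through an isospectrality statement. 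Finally, importing Edward--Wu to get $T^A_0=0$ is unnecessary (and against the spirit of the paper, which derives Edward--Wu as a corollary of the main theorem): in the paper's argument $T^A_0=0$ is not used at all.
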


The proof of the lemma is presented in Section 5.
Combining Theorem \ref{Th4.1} and Lemma \ref{L4.1}, we obtain the following result.

\begin{corollary} \label{C4.1}
Under hypotheses of Theorem \ref{Th4.1}, assume that
$g=i\H a$
(or equivalently
$\beta=-a(\Lambda a) +(\H a) (Da)$).
Then
\begin{equation}
\left.{\pa \big(\zeta_{\alpha_\tau}(s)\big)\over \pa \tau}\right|_{\tau=0}\le 0
                                    \label{4.8}
\end{equation}
for every real
$s$.
If the equality holds in \eqref{4.8} for some
$0\neq s\in\R$,
then
$a$
is conformally equivalent to the constant-valued function
$\mathbf 1$.
\end{corollary}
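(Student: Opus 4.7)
The plan is to reduce the corollary to a direct combination of Theorem \ref{Th4.1} (specifically the simplified formula \eqref{4.5}) with the sign information supplied by Lemma \ref{L4.1}. First I would verify that the choice $g=i\H a$ is admissible as a solution of \eqref{4.2}--\eqref{4.3}: since $\H$ annihilates constants one has $\int_\S \H a\,d\theta=0$, and using $D=-i\,d/d\theta$ together with the identity $D\H=\H D=\Lambda$ on zero-mean functions, a one-line calculation gives
\begin{equation*}
\beta = ag'-a'g = -a(\Lambda a) + (\H a)(Da),
\end{equation*}
so the two formulations of the hypothesis in the corollary are equivalent.

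With this choice in place, \eqref{4.5} specialized to real $z=s$ reads
\begin{equation*}
\left.\frac{\pa \zeta_{\alpha_\tau}(s)}{\pa \tau}\right|_{\tau=0} = s\,\T\big[(\Lambda_a+P_0)^{-s-1}(\Lambda_a^2-D_a^2)\big].
\end{equation*}
Introducing $\sigma:=-s$, the right-hand side becomes $-\sigma\,\T\big[(\Lambda_a+P_0)^{\sigma-1}(\Lambda_a^2-D_a^2)\big]$, which is precisely the quantity whose sign is controlled by Lemma \ref{L4.1}. The sign bookkeeping then proceeds by cases: for $s>0$ we have $\sigma<0$, so by \eqref{4.7} the trace is $\le 0$ and multiplying by $s>0$ yields a non-positive derivative; for $s<0$ we have $\sigma>0$, so by \eqref{4.6} the trace is $\ge 0$ and multiplying by $s<0$ again yields a non-positive derivative; at $s=0$ the factor $s$ makes the expression vanish. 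This establishes \eqref{4.8}.

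For the rigidity statement, suppose the derivative vanishes at some $0\neq s\in\R$. Since $s\neq 0$, the formula above forces $\T\big[(\Lambda_a+P_0)^{\sigma-1}(\Lambda_a^2-D_a^2)\big]=0$ with $\sigma=-s\neq 0$, and the equality clause of Lemma \ref{L4.1} immediately yields that $a$ is conformally equivalent to $\mathbf{1}$. There is no real obstacle in the argument: all the analytical content has been absorbed into Theorem \ref{Th4.1} and Lemma \ref{L4.1} (whose proofs live in the next section), and what remains is the short sign-bookkeeping step combined with the verification that $g=i\H a$ is an admissible variation direction.
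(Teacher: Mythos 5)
Your proof is correct and takes essentially the same route as the paper, which simply states the corollary as an immediate consequence of Theorem~\ref{Th4.1} (formula~\eqref{4.5}) and Lemma~\ref{L4.1}. You have merely spelled out the two small steps the paper leaves implicit: the verification that $g=i\H a$ indeed gives $\beta=-a(\Lambda a)+(\H a)(Da)$, and the substitution $\sigma=-s$ needed to align the exponent $-s-1$ in \eqref{4.5} with the exponent $s-1$ appearing in \eqref{4.6}--\eqref{4.7}.
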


\subsection{Proof of Theorem \ref{Th1.2}}

We start with proving \eqref{1.6}. Let
$\alpha_\tau\ (0\le\tau<\infty)$
be the deformation from Theorem \ref{Th1.3}. Here
$\alpha_0=a$.
By  statement (3) of Theorem \ref{Th1.3},
$\zeta_{\alpha_\tau}(s)$
is smooth and non-increasing in
$\tau\in[0,\infty)$
for any real
$s\neq1$.
We would like to prove that
\begin{equation}
\inf_{\tau\in[0,\infty)}\zeta_{\alpha_\tau}(s)=2\zeta_R(s)\quad\mbox{for}\quad 1\neq s\in\R.
                                      \label{4.9}
\end{equation}
Let us consider
$\Gamma_\ep=\alpha_{1/\ep}\ (0<\ep<\infty)$.
By statement (4) of Theorem \ref{Th1.3},
$$
\Gamma_\ep\to {\mathbf 1}\quad \textrm{in}\quad C^\infty(\S)\quad{as}\quad\ep \to 0^+.
$$
Setting
$\Gamma_{-\ep}=\Gamma_\ep$
for
$\ep>0$
and
$\Gamma_0={\mathbf 1}$,
we have defined the continuous path
$$
\R\rightarrow C^\infty(\S),\quad \ep\mapsto\Gamma_\ep
$$
consisting of positive functions. For a fixed
$1\neq s\in\R$,
the function
$\ep\mapsto\zeta_{\Gamma_\ep}(s)$
is continuous on
$\R$.
Hence
$$
\inf_{\tau\in(0,\tau_0)}\zeta_{\alpha_\tau}(s)=\zeta_{\alpha_{\tau_0}}(s)
=\zeta_{\Gamma_{\tau_0^{-1}}}(s)\to \zeta_{\Gamma_0}(s)=2\zeta_R(s)\quad\textrm{as}\quad \tau_0\to\infty.
$$
This implies \eqref{4.9}. We have thus proved \eqref{1.6}.

Now assume that
$\zeta_a(s)-2\zeta_R(s)=0$
for some
$0\neq s\in\R$.
Since the function
$\zeta_{\alpha_\tau}(s)-2\zeta_R(s)$
is non-increasing in
$\tau$, we conclude that
$$
\zeta_{\alpha_\tau}(s)-2\zeta_R(s)=0\quad \textrm{for all}\quad\tau \in [0,\infty).
$$
In particular the derivative
${\pa \zeta_{\alpha_\tau}\over \pa\tau}(s)$
at
$\tau=0$
is zero and we can use Corollary \ref{C4.1} to deduce that
$a$
is conformally equivalent to
$\mathbf 1$.
Conversely, if
$a$
is conformally equivalent to
$\mathbf 1$,
then
$\zeta_a=\zeta_{\mathbf 1}=2\zeta_R$.
\hfill $\Box$

\section{Proof of Theorem \ref{Th4.1} and Lemma \ref{L4.1}}  \label{S5}

We again consider a deformation
$\alpha_\tau$
of a positive function
$a\in C^\infty(\S)$
satisfying hypotheses of Lemma \ref{L2.2} with
$l=\infty$.
Hypotheses of all statements in this section coincide with that of Lemma \ref{L2.2}. The hypotheses are not written explicitly for brevity.

\subsection{Proof of Theorem \ref{Th4.1}}
We are going to prove \eqref{4.4} for
$\Re z>2$.
Then the validity of \eqref{4.4} for all
$z\in \C$
will follow by the unique continuation principle since both sides of \eqref{4.4} are entire functions.

The equalities
$\Lambda_aP_0=P_0\Lambda_a=0$
immediately follow from definitions of
these operators (the definition of
$\Lambda_a$
is given in the Introduction and
$P_0$
is defined before Theorem \ref{Th4.1}). We will widely use these equalities with no reference.

Note that
$(\Lambda_a+P_0)^{-z}$
and
$(|D_a|+P_0)^{-z}$
are trace class operators for
$\Re z>2$.
Hence \eqref{3.40} implies that
\begin{equation}
\left.{\pa \big(\zeta_{\alpha_\tau}(z)\big)\over \pa \tau}\right|_{\tau=0}=-z\,\T\big[(\Lambda_a+P_0)^{-z}a^{-1}\beta\big]
+z\,\T\big[(|D_a|+P_0)^{-z}a^{-1}\beta\big].
                                           \label{5.1}
\end{equation}

Recall that the functions
$\phi_{n,\tau}$
are defined in \eqref{2.2}. Setting
$\phi_n=\phi_{n,0}$,
we have the orthonormal basis
$\{\phi_n\}_{n\in\Z}$
consisting of eigenfunctions of the operator
$D_a$
such that
$D_a\phi_n=n\phi_n$. This implies
$(|D_a|+P_0)\phi_n=\max(|n|,1)\phi_n$.

Let us demonstrate that
\begin{equation}
\T\big[(|D_a|+P_0)^{-z}a^{-1}\beta\big]=0.
                                   \label{5.2}
\end{equation}
Indeed, for an arbitrary
$n\in\Z$,
$$
\begin{aligned}
\l(|D_a|+P_0)^{-z}a^{-1}\beta\phi_n,\phi_n\r&=\l a^{-1}\beta\phi_n,(|D_a|+P_0)^{-z}\phi_n\r
=\big(\max(|n|,1)\big)^{-z} \l a^{-1}\beta\phi_n,\phi_n\r\\
&=(2\pi)^{-1}\big(\max(|n|,1)\big)^{-z} \int_{\S} a^{-2}\beta=0.
\end{aligned}
$$
The last equality of the chain is written on the base of \eqref{4.1} since
$\beta=\left.\frac{\pa\alpha_\tau}{\pa\tau}\right|_{\tau=0}$.
From this,
$$
\T\big[(|D_a|+P_0)^{-z}a^{-1}\beta\big]=\sum\limits_{n\in\Z}\l(|D_a|+P_0)^{-z}a^{-1}\beta\phi_n,\phi_n\r=0.
$$
This proves \eqref{5.2}.

In virtue of \eqref{5.2}, formula \eqref{5.1} simplifies to the following one:
\begin{equation}
\left.{\pa \big(\zeta_{\alpha_\tau}(z)\big)\over \pa \tau}\right|_{\tau=0}=-z\,\T\big[(\Lambda_a+P_0)^{-z}a^{-1}\beta\big].
                                   \label{5.3}
\end{equation}

Let the function
$g\in C^\infty(\S)$
be defined by \eqref{4.2}.
Looking at
$a,g$
and
$\beta$
as multiplication operators, we have the equality
$\beta=i(a Dg -gDa)$
which implies
\begin{eqnarray*}
(\Lambda_a+P_0)^{-z}a^{-1}\beta&=&(\Lambda_a+P_0)^{-z}a^{-{1/2}}\beta a^{-1/2}\\
&=&i(\Lambda_a+P_0)^{-z}a^{1/2} D g a^{-{1/2}}-i(\Lambda_a+P_0)^{-z}a^{-{1/2}}gDa^{1/2}.
\end{eqnarray*}
For
$\Re z>2$,
both
$(\Lambda_a+P_0)^{-z}a^{1/2}Dg a^{-{1/2}}$
and
$(\Lambda_a+P_0)^{-z}a^{-{1/2}}gDa^{1/2}$
are trace class operators and we obtain
\begin{equation}
\T\big[(\Lambda_a+P_0)^{-z}a^{-1}\beta\big]=i\,\T\big[(\Lambda_a+P_0)^{-z}a^{1/2}Dg a^{-{1/2}}\big]
-i\,\T\big[(\Lambda_a+P_0)^{-z}a^{-{1/2}}gDa^{1/2}\big].
                                             \label{5.4}
\end{equation}

Recall that the operators
$D,\ \Lambda$
and
$\H$
are related by the equalities
$\H D=D\H=\Lambda$
and
$\H\Lambda=\Lambda\H=D$.
From this,
$$
a^{1/2}D=\Lambda_a a^{-{1/2}}\H, \quad Da^{1/2}=\H a^{-{1/2}}\Lambda_a.
$$
The last of these equalities immediately gives
\begin{equation}
\T\big[(\Lambda_a+P_0)^{-z}a^{-{1/2}}gDa^{1/2}\big]=\T\big[(\Lambda_a+P_0)^{-z}a^{-{1/2}}g\H a^{-{1/2}}\Lambda_a\big].
                                      \label{5.5}
\end{equation}
Using additionally the relation
$(\Lambda_a+P_0)^{-z}\Lambda_a=(\Lambda_a+P_0)^{-z+1}(I-P_0)$,
we easily derive
\begin{equation}
\T\big[(\Lambda_a+P_0)^{-z}a^{1/2}Dg a^{-{1/2}}\big]=\T\big[(\Lambda_a+P_0)^{-z+1}(I-P_0)a^{-{1/2}}\H g a^{-{1/2}}\big].
                                     \label{5.6}
\end{equation}

Rewriting the trace on the right hand side of \eqref{5.5} in terms of a basis of Steklov eigenvectors (eigenvectors of the operator
$\Lambda_a+P_0$)
and again using the relation
 $\Lambda_a(\Lambda_a+P_0)^{-z}=(\Lambda_a+P_0)^{-z+1}(I-P_0)$,
we obtain
\begin{equation}
\begin{aligned}
\T\big[(\Lambda_a+P_0)^{-z}a^{-{1/2}}g\H a^{-{1/2}}\Lambda_a\big]
&=\T\big[\Lambda_a(\Lambda_a+P_0)^{-z}a^{-{1/2}}g\H a^{-{1/2}}\big]\\
&=\T\big[(\Lambda_a+P_0)^{-z+1}(I-P_0)a^{-{1/2}}g\H a^{-{1/2}}\big].
\end{aligned}
                              \label{5.7}
\end{equation}

Collecting \eqref{5.4}, \eqref{5.6} and \eqref{5.7}, we see that
$$
\T\big[(\Lambda_a+P_0)^{-z}a^{-1}\beta\big]=i\,\T\big[(\Lambda_a+P_0)^{-z+1}(I-P_0)a^{-{1/2}}[\H, g] a^{-{1/2}}\big].
$$
Together with \eqref{5.3}, this gives \eqref{4.4}.

We need following easy statement.

\begin{lemma}     \label{L5.1}
For a function
$f\in C^\infty(\S)$,
the operator equalities
\begin{equation}
[\H, \H f]=\H[\H,f]+F_0\,f-{\hat f}_0\,F_0
                               \label{5.8}
\end{equation}
and
\begin{equation}
\Lambda [\H, \H f]\Lambda=\Lambda f\Lambda-DfD
                                    \label{5.9}
\end{equation}
hold, where the operator
$F_0$
maps a function
$u$
to the constant-valued function
${\hat u}_0{\mathbf 1}$.
\end{lemma}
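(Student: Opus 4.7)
My plan is to prove (5.8) by a direct Fourier-coefficient computation, and then to deduce (5.9) from (5.8) by a $\Lambda$-sandwich argument. Throughout, I interpret $\H f$ inside a commutator as the multiplication operator by the function $\H(f)$, consistent with the way the identity is used after (5.7).

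First I would establish (5.8) by expanding $f=\sum_{j\in\Z}\hat f_j\,e^{ij\theta}$, applying both sides to a test monomial $u=e^{ik\theta}$, and using $\H e^{im\theta}=\mathrm{sgn}(m)\,e^{im\theta}$ (with $\mathrm{sgn}(0)=0$) to read off Fourier coefficients. A direct calculation gives
$$
\big([\H,\H f]-\H[\H,f]\big)e^{ik\theta}=\sum_{j\in\Z}\bigl(\mathrm{sgn}(j)-\mathrm{sgn}(j+k)\bigr)\bigl(\mathrm{sgn}(j+k)-\mathrm{sgn}(k)\bigr)\hat f_j\,e^{i(j+k)\theta}.
$$
A short sign-pattern case check---if $\mathrm{sgn}(j+k)=\mathrm{sgn}(k)$ the second factor vanishes; otherwise, provided $j,k,j+k\neq 0$, one has $\mathrm{sgn}(j)=\mathrm{sgn}(j+k)$ and the first factor vanishes---shows that the only non-vanishing contribution occurs at $j=-k$ with $k\neq 0$, where the coefficient is $\mathrm{sgn}(-k)^2=1$. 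Hence the left-hand side equals $\hat f_{-k}\mathbf 1$ for $k\neq 0$ and $0$ for $k=0$. On the other hand, $(F_0 f-\hat f_0 F_0)e^{ik\theta}=\hat f_{-k}\mathbf 1-\hat f_0\delta_{k,0}\mathbf 1$ evaluates to the same quantity, proving (5.8).

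Next I would deduce (5.9) from (5.8) by sandwiching with $\Lambda$ on both sides. Since $\Lambda\mathbf 1=0$ and the range of $F_0$ is $\C\mathbf 1$, both $\Lambda F_0=0$ and $F_0\Lambda=0$ (the latter because $(\Lambda u)^{\wedge}_0=0$), so the two correction terms drop out, leaving $\Lambda[\H,\H f]\Lambda=\Lambda\H[\H,f]\Lambda$. Using $\Lambda=\H D=D\H$, which immediately yields $\Lambda\H=\H\Lambda=D$, I would then expand
$$
\Lambda\H[\H,f]\Lambda=D(\H f-f\H)\Lambda=D\H f\,\Lambda-Df\,\H\Lambda=\Lambda f\Lambda-DfD,
$$
which is (5.9). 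The main (and only non-routine) step is the sign-pattern case check in (5.8); it is elementary but needs careful bookkeeping for the boundary cases $j=0$, $k=0$, and $j+k=0$. The derivation of (5.9) is then purely algebraic, the essential point being that the $F_0$-type correction terms in (5.8) are precisely those annihilated by composition with $\Lambda$.
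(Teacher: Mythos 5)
Your proof is correct. Let me briefly compare it to the paper's argument.

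For \eqref{5.8}, the paper proceeds via the Hilbert-transform product formula $\H\big(uv+(\H u)(\H v)\big)=u\H v+v\H u$, which it derives by observing that $(u+i\H u/i)(v+i\H v/i)$ extends holomorphically to the disk and tracking the real/imaginary parts and mean values; it then specializes $v=\H f$ and manipulates. You instead verify the operator identity directly by testing against the monomials $e^{ik\theta}$ and using $\H e^{im\theta}=\mathrm{sgn}(m)e^{im\theta}$ with $\mathrm{sgn}(0)=0$. Your computation is correct: the coefficient of $e^{i(j+k)\theta}$ in $[\H,\H f]e^{ik\theta}-\H[\H,f]e^{ik\theta}$ is indeed $(\mathrm{sgn}(j)-\mathrm{sgn}(j+k))(\mathrm{sgn}(j+k)-\mathrm{sgn}(k))\hat f_j$, and your sign-pattern case analysis (including the delicate boundary cases $j=0$, $k=0$, $j+k=0$) correctly isolates the single surviving term $j=-k$, $k\ne0$, giving $\hat f_{-k}\mathbf 1$; this matches $(F_0 f-\hat f_0 F_0)e^{ik\theta}$. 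The two approaches are about equally long, but yours is more elementary and self-contained (no appeal to holomorphic extension), while the paper's route exposes the structural product identity \eqref{5.13} which has its own interest. For \eqref{5.9}, your $\Lambda$-sandwich argument using $\Lambda F_0=F_0\Lambda=0$, $\Lambda\H=\H\Lambda=D$, and $D\H=\Lambda$ is essentially identical to the paper's.
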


The proof of the lemma is given at the end of this subsection. With the help of the lemma, we now prove \eqref{4.5} for
$g=i\H a$.
Substituting this value into \eqref{4.4}, we obtain
\begin{equation}
\left.{\pa \big(\zeta_{\alpha_\tau}(z)\big)\over \pa \tau}\right|_{\tau=0}
=z\,\T\big[((\Lambda_a+P_0)^{-z+1}(I-P_0)a^{-{1/2}}[\H,\H a]a^{-{1/2}}\big].
                                    \label{5.10}
\end{equation}
Writing the trace on the right hand side of \eqref{5.10} in terms of an orthonormal basis consisting of eigenvectors of the operator
$\Lambda_a+P_0$,
one easily obtains
\begin{equation}
\begin{aligned}
T\big[((\Lambda_a+P_0)^{-z+1}(I\!-\!P_0)a^{-{1/2}}[\H,\H a]a^{-{1/2}}\big]
&=\T\big[((\Lambda_a+P_0)^{-z-1}\Lambda_a a^{-{1/2}}[\H,\H a]a^{-{1/2}}\Lambda_a\big]\\
&=\T\big[((\Lambda_a+P_0)^{-z-1}a^{1/2}\Lambda [\H,\H a]\Lambda a^{1/2}\big].
\end{aligned}
                                    \label{5.11}
\end{equation}
Using \eqref{5.9} with
$f=a$,
we see that
\begin{equation}
\begin{aligned}
\T\big[((\Lambda_a+P_0)^{-z-1}a^{1/2}\Lambda [\H,\H a]\Lambda a^{1/2}\big]
&=\T\big[((\Lambda_a+P_0)^{-z-1}a^{1/2}(\Lambda a\Lambda-DaD) a^{1/2}\big]\\
&=\T\big[((\Lambda_a+P_0)^{-z-1}(\Lambda_a^2-D_a^2)\big].
\end{aligned}
                                    \label{5.12}
\end{equation}
Combining \eqref{5.10}--\eqref{5.12}, we arrive to \eqref{4.5}.
\hfill $\Box$

\begin{proof}[Proof of Lemma \ref{L5.1}]
The alternative definition of the Hilbert transform is as follows: two real functions
$u,v\in C^\infty(\S)$
satisfy
$iv=\H u$ if and only if
$v$
has the zero mean value and
$u+iv$
admits a holomorphic extension to the unit disk.
Thus, for two real functions
$u,v\in C^\infty(\S)$,
the product
$$
(u+\H u)(v+\H v)= uv +(\H u)(\H v)+[u \H v+v\H u]
$$
admits a holomorphic extension to the unit disk. The function in the bracket has the zero mean value as is seen from
$$
\int_S(u \H v+v\H u)=\sum_{n\in \Z\b\{0\}} \hat u_n {\rm sgn}(-n)\hat v_{-n}+\sum_{n\in \Z\b\{0\}} \hat v_n {\rm sgn}(-n)\hat u_{-n}=0.
$$
We have thus proved the product formula
\begin{equation}
\H\big(uv+(\H u)(\H v)\big)=u \H v+v\H u.
                                       \label{5.13}
\end{equation}
Being proved for real smooth functions, the formula is valid for all
$u,v\in L^2(\S)$
since all terms on \eqref{5.13} are bilinear in
$(u,v)$.

Setting
$v=\H f$
in \eqref{5.13}, we have
$$
\H((\H f) u)-(\H f)(\H u)= (\H^2 f) u-\H((\H^2 f)(\H u) ).
$$
Since
$\H^2 f=f-{\hat f}_0{\mathbf 1}$,
the formula becomes
$$
[\H,\H f] u= f u-\H(f(\H u))-{\hat f}_0 u+{\hat f}_0(\H^2 u).
$$
Substituting the expressions
$fu=\H^2(fu)+(\widehat{fu})_0{\mathbf 1}$
and
$\H^2u=u-F_0u$,
we obtain
$$
[\H,\H f] u= \H^2(fu)-\H(f(\H u))+(\widehat{fu})_0{\mathbf 1}-{\hat f}_0(F_0 u).
$$
This can be written in the form
$$
[\H,\H f] u= \H[\H,f]u+(\widehat{fu})_0{\mathbf 1}-{\hat f}_0\,F_0u.
$$
We have thus proved \eqref{5.8}.

Now, we multiply \eqref{5.8} by
$\Lambda$
from both sides and use the obvious equalities
$F_0\Lambda=\Lambda F_0=0$
to obtain
$$
\Lambda[\H, \H f]\Lambda=\Lambda\H[\H,f]\Lambda.
$$
This can be written in the form
$$
\Lambda[\H, \H f]\Lambda=\Lambda\H^2f\Lambda-\Lambda\H f\H\Lambda.
$$
On using the equalities
$\Lambda\H^2=\Lambda$
and
$\Lambda\H=\H\Lambda=D$,
we obtain \eqref{5.9}.
\end{proof}

\subsection{Proof of Lemma \ref{L4.1}}
If $a$ is conformally equivalent to a constant function, then it is of the form
$$
a(\theta)={\hat a}_0+{\hat a}_1e^{i\theta}+{\hat a}_{-1}e^{-i\theta}.
$$
This fact can be easily derived from the definition of conformally equivalent functions and it also follows from \cite[Theorem 1.2]{JS3}. On using this representation, one easily proves that
$$
(\Lambda a\Lambda-DaD)e^{in\theta}=0\quad\mbox{for all}\quad n\in\Z,
$$
i.e.
$\Lambda a\Lambda-DaD=0$.
In the case of a positive function
$a\in C^\infty(\S)$,
this implies
$\Lambda_a^2-D_a^2=0$
and
$\T\big[(\Lambda_a+P_0)^z(\Lambda_a^2-D_a^2)\big]=0$
for any
$z\in\C$.
This proves the ``if'' part of the second statement of Lemma \ref{L4.1}.

\bigskip

Our proof of Lemma \ref{L4.1} is based on some elementary convexity arguments that are actually well known. For the sake of completeness, we present the proof of the following statement.

\begin{lemma} \label{L5.2}
Let
$A:H\rightarrow H$
be a linear operator in a Hilbert space. Assume that there is an orthonormal basis
$\{e_k\}_{k\in{\mathbb N}}$
of
$H$
consisting of eigenvectors of the operator
$A$,
i.e.,
$Ae_k=\lambda_k e_k$
with positive eigenvalues
$\lambda_k$
satisfying
$\lambda_k\le C(k+1)^M$
with some constants
$C$
and
$M$.

Let
$u$
and
$v$
be two vectors from
$H$.
Expand them in the basis
$u=\sum_{k\in{\mathbb N}}\l u,e_k\r\, e_k$,
$v=\sum_{k\in{\mathbb N}}\l v,e_k\r\, e_k$.
Assume that the coefficients of the expansions rapidly decay, i.e.,
$|\l u,e_k\r|+|\l v,e_k\r|\le C_N(k+1)^{-N}$
for every
$N\in{\mathbb N}$.
Assume also that
\begin{equation}
\l u,e_k\r\l e_k,v\r\ge 0\quad \mbox{for every}\ k\in{\mathbb N}
                              \label{5.14}
\end{equation}
and
\begin{equation}
\l u,v\r=\sum_{k\in \N}\l u,e_k\r\l e_k,v\r=1.
                              \label{5.15}
\end{equation}
Then

{\rm (1)}
$\l A^r u,v\r\ge\l Au,v\r^r$
for every
$r\ge1$;

{\rm (2)}
$\l A^ru,v\r\le\l Au,v\r^r$
for every
$r\in[0,1)$.

If, additionally, $A$ is an invertible operator, then

{\rm (3)}
$\l A^ru,v\r\ge\l Au,v\r^r$
for every
$r<0$.
\end{lemma}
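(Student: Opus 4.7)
The plan is to reduce the whole statement to Jensen's inequality applied to the probability measure $p_k=\langle u,e_k\rangle\langle e_k,v\rangle$ on $\mathbb N$.

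First, I would introduce $p_k:=\langle u,e_k\rangle\langle e_k,v\rangle$. By hypothesis \eqref{5.14} each $p_k\ge 0$, and by \eqref{5.15} $\sum_k p_k=1$, so $\{p_k\}_{k\in\N}$ defines a probability distribution on $\N$. The rapid decay of $\langle u,e_k\rangle$ and $\langle v,e_k\rangle$ combined with the polynomial growth $\lambda_k\le C(k+1)^M$ (and, for item (3), the invertibility bound $\lambda_k\ge c>0$ in the polynomially-decaying case too — but in fact we only need $\lambda_k>0$ together with rapid decay of $p_k$ to make sense of $\sum_k p_k\lambda_k^r$ for every $r\in\R$) ensures that all series appearing below converge absolutely.

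Next I would express both sides of each inequality as moments of this probability measure. Expanding $u$ and $v$ in the eigenbasis and using $A^re_k=\lambda_k^r e_k$ gives, for every $r\in\R$,
\begin{equation*}
\langle A^r u,v\rangle=\sum_{k\in\N}\lambda_k^r\,\langle u,e_k\rangle\langle e_k,v\rangle=\sum_{k\in\N}\lambda_k^r\,p_k.
\end{equation*}
In particular $\langle Au,v\rangle=\sum_k \lambda_k p_k=:\bar\lambda$, which is strictly positive. Hence the three statements of the lemma become, respectively,
\begin{equation*}
\sum_{k\in\N}p_k\lambda_k^r\ \ge\ \Bigl(\sum_{k\in\N}p_k\lambda_k\Bigr)^{r}\quad (r\ge 1\ \text{or}\ r<0),\qquad
\sum_{k\in\N}p_k\lambda_k^r\ \le\ \Bigl(\sum_{k\in\N}p_k\lambda_k\Bigr)^{r}\quad (0\le r<1).
\end{equation*}

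Finally I would invoke Jensen's inequality applied to the random variable $X$ taking the value $\lambda_k$ with probability $p_k$, using the function $f(x)=x^r$ on $(0,\infty)$. Since $f''(x)=r(r-1)x^{r-2}$, the function $f$ is convex precisely when $r\ge 1$ or $r\le 0$, and concave when $0\le r\le 1$. For the convex range Jensen gives $\E f(X)\ge f(\E X)$, which is exactly (1) and (3); for the concave range it gives the reverse inequality, which is (2). In case (3) one must additionally observe that invertibility of $A$ forces $\lambda_k>0$ (so $\lambda_k^r$ is well defined for $r<0$) and that, because of the rapid decay of $p_k$, the moment $\sum p_k\lambda_k^r$ is finite even when the $\lambda_k$ are bounded away from $0$ only above. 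There is no real obstacle here; the only subtlety is bookkeeping on convergence, which is immediate from the stated decay/growth hypotheses.
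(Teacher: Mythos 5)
Your proposal is correct and follows essentially the same route as the paper: both construct the probability measure $p_k=\l u,e_k\r\l e_k,v\r$, identify $\l A^ru,v\r=\sum_k p_k\lambda_k^r$, and apply Jensen's inequality (the paper phrases it as a direct convexity estimate, which is the same thing) to the function $x\mapsto x^r$, using that it is convex on $(0,\infty)$ for $r\in(-\infty,0]\cup[1,\infty)$ and concave for $r\in[0,1]$. The remarks on convergence and on $\lambda_k$ being bounded below by a positive constant when $A$ is invertible match the (implicit) bookkeeping in the paper.
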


\begin{proof}
Let
$f:(0,+\infty)\rightarrow(0,+\infty)$
be a convex function. Then
$$
\l f(A)u,v\r=\sum_{k\in \N}f(\lambda_k)\ep_k,
$$
where
$\ep_k=\l u,e_k\r\l e_k,v\r\ge 0$
and
$\sum_{k\in \N} \ep_k=1$.
We apply the convexity of the function
$f$
to obtain
$$
\l f(A)u,v\r=\sum_{k\in \N}f(\lambda_k)\ep_k\ge f\big(\sum_{k\in \N}\lambda_k\ep_k\big)=f(\l Au,v\r).
$$
In the case of a concave function
$f$,
we have the opposite inequality
$$
\l f(A)u,v\r\le f(\l Au,v\r).
$$

The function
$f_r:(0,+\infty)\rightarrow(0,+\infty),\ f_r(x)=x^r$
is convex for
$r\in(-\infty,0]\cup[1,\infty)$
and is concave for
$r\in[0,1]$.
\end{proof}

We use the orthonormal basis
$\{\phi_n\}_{n\in \Z}$
of
$L^2({\mathbb S})$
which was introduced after formula \eqref{5.1}.
Recall that
$D_a\phi_n=n\phi_n$.
Recall also that
$P_0:L^2({\mathbb S})\rightarrow L^2({\mathbb S})$
is the orthogonal projection onto the one-dimensional subspace spanned by the vector
$\phi_0$.
Then
$\{\phi_n\}_{n\in \Z}$
is the orthonormal basis consisting of eigenfunctions of the invertible operator
$D_a+P_0$
with positive eigenvalues. Recall also that \cite[Lemma 2.1]{JS2}
\begin{equation}
\l \Lambda_a \phi_n,\phi_n\r\ge |n|,
                                   \label{5.16}
\end{equation}
\begin{equation}
\l(\Lambda_a+P_0)^{-1}\phi_n,\phi_n\r\ge |n|^{-1}\quad(n\neq0).
                                   \label{5.17}
\end{equation}

The proof of Lemma \ref{L4.1} consists of 6 parts.

\bigskip

{\bf Part 1.} Let
$s\ge 2$.
Fix an integer
$n\not=0$.
Set
$A=(\Lambda_a+P_0)^{s-1}:L^2({\mathbb S})\rightarrow L^2({\mathbb S})$,
$u=v=\phi_n$
and
$r=\frac{s+1}{s-1}\ge1$
in Lemma \ref{L5.2}.
Hypotheses of the Lemma are satisfied since
$(\Lambda_a+P_0)^{s-1}$
is a positive self-adjoint operator and
$\phi_n$
is a unit vector in
$L^2({\mathbb S})$.
Applying statement (1) of Lemma \ref{L5.2}, we obtain
\begin{equation}
\l(\Lambda_a\!+\!P_0)^{s+1}\phi_n,\phi_n\r\ge \l(\Lambda_a\!+\!P_0)^{s-1}\phi_n,\phi_n\r^{s+1\over s-1}=\l(\Lambda_a\!+\!P_0)^{s-1}\phi_n,\phi_n\r\l(\Lambda_a\!+\!P_0)^{s-1}\phi_n,\phi_n\r^{2\over s-1}.
                               \label{5.18}
\end{equation}
Then we set
$A=\Lambda_a+P_0$,
$u=v=\phi_n$
and
$r=s-1\ge1$
in Lemma \ref{L5.2}. By the same statement (1) of Lemma \ref{L5.2},
$$
\l(\Lambda_a+P_0)^{s-1}\phi_n,\phi_n\r\ge \l\Lambda_a\phi_n,\phi_n\r^{s-1}.
$$
With the help of \eqref{5.16}, this gives
\begin{equation}
\l(\Lambda_a+P_0)^{s-1}\phi_n,\phi_n\r\ge |n|^{s-1}.
                                 \label{5.19}
\end{equation}
We combine \eqref{5.18} and \eqref{5.19} to obtain
$$
\l(\Lambda_a+P_0)^{s+1}\phi_n,\phi_n\r\ge \l(\Lambda_a+P_0)^{s-1}\phi_n,\phi_n\r n^2=\l(\Lambda_a+P_0)^{s-1}D_a^2\phi_n,\phi_n\r.
$$
This inequality holds for every
$n\in{\mathbb Z}\setminus\{0\}$.
It implies
\begin{equation}
\T((\Lambda_a+P_0)^{s-1}(\Lambda_a^2-D_a^2))=\sum_{n\not=0}\Big[\l(\Lambda_a+P_0)^{s+1}\phi_n,\phi_n\r-\l(\Lambda_a+P_0)^{s-1}D_a^2\phi_n,\phi_n\r\Big]\ge 0.
                                 \label{5.20}
\end{equation}
We have thus proved \eqref{4.6} for
$s\ge2$.

The equality in \eqref{5.20} holds if and only if each summand on the right-hand side is zero. In such a case, the equality in \eqref{5.16} must hold for  every
$n\in{\mathbb Z}\setminus\{0\}$.
In particular, setting
$n=1$
in \eqref{5.16}, we have
$\l \Lambda_a \phi_1,\phi_1\r=1$.
We can now use \cite[Lemma 2.5]{JS2} to obtain that
$a$
is conformally equivalent to the constant function
${\mathbf 1}$.

\bigskip

{\bf Part 2.}
Let
$s\le -2$.
Here our arguments repeat that of Part 1 with one exception:
$(\Lambda_a+P_0)^{-1}$
now plays the role of
$\Lambda_a+P_0$.
By Lemma \ref{L5.2} and \eqref{5.17},
$$
\begin{aligned}
\l(\Lambda_a+P_0)^{s-1}\phi_n,\phi_n\r&\ge \l(\Lambda_a+P_0)^{s+1}\phi_n,\phi_n\r^{s-1\over s+1}\\
&\ge\l(\Lambda_a+P_0)^{s+1}\phi_n,\phi_n\r^{-{2\over s+1}}\l(\Lambda_a+P_0)^{s+1}\phi_n,\phi_n\r\\
&\ge n^{-2}\l(\Lambda_a+P_0)^{s+1}\phi_n,\phi_n\r,
\end{aligned}
$$
i.e.
$$
\l(\Lambda_a+P_0)^{s+1}\phi_n,\phi_n\r\le n^2\l(\Lambda_a+P_0)^{s-1}\phi_n,\phi_n\r=\l(\Lambda_a+P_0)^{s-1}D_a^2\phi_n,\phi_n\r.
$$
We have used that
$s+1\le -1$,
$-2/(s+1)>0$, $(s-1)/(s+1)\ge 1$.
We conclude as in Part 1.

\bigskip

{\bf Part 3.}
Let
$1\le s\le 2$.
Fix an integer
$n\not=0$.
We first set
$A=\Lambda_a+P_0$,
$u=v=\phi_n$
and
$r=s-1\in(0,1)$
in Lemma \ref{L5.2}. By statement (2) of Lemma \ref{L5.2},
\begin{equation}
\l(\Lambda_a+P_0)^{s-1}\phi_n,\phi_n\r\le \l\Lambda_a\phi_n,\phi_n\r^{s-1}.
                                      \label{5.21}
\end{equation}
Then we set
$A=\Lambda_a+P_0$,
$u=v=\phi_n$
and
$r=s+1\ge2$
in Lemma \ref{L5.2}. Applying statement (1) of Lemma \ref{L5.2}, we obtain
$$
\l(\Lambda_a+P_0)^{s+1}\phi_n,\phi_n\r\ge\l\Lambda_a\phi_n,\phi_n\r^{s+1}=\l\Lambda_a\phi_n,\phi_n\r^{s-1}\l\Lambda_a\phi_n,\phi_n\r^2.
$$
With the help of \eqref{5.16}, this gives
\begin{equation}
\l(\Lambda_a+P_0)^{s+1}\phi_n,\phi_n\r\ge n^2\l\Lambda_a\phi_n,\phi_n\r^{s-1}.
                             \label{5.22}
\end{equation}
Inequalities \eqref{5.21} and \eqref{5.22} imply
\begin{equation}
\T((\Lambda_a+P_0)^{s-1}(\Lambda_a^2-D_a^2))=\sum_{n\in \Z\b\{0\}}\big(\l(\Lambda_a+P_0)^{s+1}\phi_n,\phi_n\r-n^2\l\Lambda_a^{s-1}\phi_n,\phi_n\r\big)\ge 0.
                             \label{5.23}
\end{equation}
We have thus proved \eqref{4.6} for $s\in(1,2)$.

Again the equality in \eqref{5.23} means that each summand on the right-hand side is zero. In such a case, the equality in \eqref{5.16} must hold for  every $n\in{\mathbb Z}\setminus\{0\}$.
In particular, setting
$n=1$
in \eqref{5.16}, we have
$\l \Lambda_a \phi_1,\phi_1\r=1$.
We use again \cite[Lemma 2.5]{JS2} to obtain that
$a$
is conformally equivalent to
${\mathbf 1}$.

\bigskip

{\bf Part 4.}
Let
$-2\le s\le -1$.
Here our arguments repeat that of Part 3 with the exception:
$(\Lambda_a+P_0)^{-1}$
now plays the role of
$\Lambda_a+P_0$.
With the help of Lemma \ref{L5.2} and of \eqref{5.17}, we obtain the estimates
$$
\l(\Lambda_a+P_0)^{s+1}\phi_n,\phi_n\r\le\l(\Lambda_a+P_0)^{-1}\phi_n,\phi_n\r^{-s-1}
$$
and
$$
\begin{aligned}
\l(\Lambda_a+P_0)^{s-1}\phi_n,\phi_n\r&\ge \l(\Lambda_a+P_0)^{-1}\phi_n,\phi_n\r^{1-s}\\
&=\l(\Lambda_a+P_0)^{-1}\phi_n,\phi_n\r^{-1-s}\l(\Lambda_a+P_0)^{-1}\phi_n,\phi_n\r^2\\
&\ge n^{-2}\l(\Lambda_a+P_0)^{-1}\phi_n,\phi_n\r^{-1-s}.
\end{aligned}
$$
Hence
$$
\T((\Lambda_a+P_0)^{s-1}(\Lambda_a^2-D_a^2))=\sum_{n\in \Z}\big(\l(\Lambda_a+P_0)^{s+1}\phi_n,\phi_n\r-n^2\l(\Lambda_a+P_0)^{s-1}\phi_n,\phi_n\r\big)\le 0. $$
This proves \eqref{4.7} for
$s\in(-2,-1)$.

Again the equality implies
$\l(\Lambda_a+P_0)^{-1}\phi_1,\phi_1\r=1$
and we use \cite[Lemma 2.5]{JS2} to obtain that
$a$
is conformally equivalent to
${\mathbf 1}$.

\bigskip

{\bf Part 5.}
Let
$0<s\le 1$.
First of all, on using the equality
$D_a=\Lambda_a a^{-{1/2}}\H a^{1/2}$
we write
\begin{equation}
\begin{aligned}
&\T((\Lambda_a+P_0)^{s-1}(\Lambda_a^2-D_a^2))=\\
&=\sum_{n\in\Z\b\{0\}}\Big(\l(\Lambda_a+P_0)^{s+1}\phi_n,\phi_n\r-\l(\Lambda_a+P_0)^{s-1}\Lambda_a a^{-1/2}\H a^{1/2}D_a\phi_n,\phi_n\r\Big)\\
&=\sum_{n\in\Z\b\{0\}}\Big(\l(\Lambda_a+P_0)^{s+1}\phi_n,\phi_n\r-n\l(\Lambda_a+P_0)^s a^{-1/2}\H a^{1/2}\phi_n,\phi_n\r\Big).
\end{aligned}
                               \label{5.24}
\end{equation}
The first term of each summand on the right-hand side of \eqref{5.24} is real since
$(\Lambda_a+P_0)^{s+1}$
is a self-adjoint operator. We will see that the second term is also real, although it is not quite obvious.

By statement (1) of Lemma \ref{L5.2} and by \eqref{5.16},
\begin{equation}
\l(\Lambda_a+P_0)^{s+1}\phi_n,\phi_n\r\ge \big(\l\Lambda_a\phi_n,\phi_n\r\big)^{s+1}\ge |n|^s\l\Lambda_a \phi_n,\phi_n\r.
                                           \label{5.25}
\end{equation}

Let
$\{\Psi_k\}_{k\in \N}$
be the  orthonormal basis of
$L^2(\S)$
consisting of eigenfunctions of the operator
$A=\Lambda_a+P_0$
(it is the partial case, for
$\tau=0$,
of the basis
$\{\Psi_{k,\tau}\}_{k\in \N}$
that was used in Section 2.3).
Then
$(\Lambda_a+P_0)\Psi_k=\tilde\lambda_k \Psi_k$
for
$k\in \N$,
where
$\tilde \lambda_0=1$,
$\Psi_0=\phi_0$,
and
$\tilde\lambda_k=\lambda_k$
for
$k>0$
($\lambda_k$
being the Steklov eigenvalues of the function
$a$).

Let us fix an integer
$n\neq0$.
To estimate the second term of the summand on the right-hand side of \eqref{5.24}, we use Lemma \ref{L5.2} with
\begin{equation}
e_k=\Psi_k,\quad A=\Lambda_a+P_0,\quad u=\delta_n^{-1}a^{-1/2}{\mathcal H}a^{1/2}\phi_n,\quad v=\mbox{sgn}(n)\phi_n,\quad r=s\in(0,1),
                                           \label{5.26}
\end{equation}
where the positive constant
$\delta_n$
will be chosen later.

We have to check the hypotheses \eqref{5.14}--\eqref{5.15} of Lemma \ref{L5.2}. To this end we write
$$
\l u,\Psi_k\r\l \Psi_k,v\r=\delta_n^{-1}\mbox{sgn}(n)\l a^{-1/2}{\mathcal H}a^{1/2}\phi_n,\Psi_k\r\l \Psi_k,\phi_n\r.
$$
In the case of
$k=0$,
the right-hand side is equal to zero since
$\l e_0,\phi_n\r=0$.
In the case of
$k>0$, we use the equalities
$\Lambda_a\Psi_k=\lambda_k\Psi_k$
and
$D_a\phi_n=n\phi_n$ to obtain
$$
\begin{aligned}
\l u,\Psi_k\r\l \Psi_k,v\r&=\delta_n^{-1}\mbox{sgn}(n)\lambda_k^{-1}\l a^{-1/2}{\mathcal H}a^{1/2}\phi_n,\Lambda_a \Psi_k\r\l \Psi_k,\phi_n\r\\
&=\delta_n^{-1}\mbox{sgn}(n)\lambda_k^{-1}\l \Lambda_a\, a^{-1/2}{\mathcal H}a^{1/2}\phi_n,\Psi_k\r\l \Psi_k,\phi_n \r\\
&=\delta_n^{-1}\mbox{sgn}(n)\lambda_k^{-1}\l D_a\phi_n,\Psi_k\r\l \Psi_k,\phi_n\r\\
&=\delta_n^{-1}\mbox{sgn}(n)n\lambda_k^{-1}\l \phi_n,\Psi_k\r\l \Psi_k,\phi_n\r\\
&=\delta_n^{-1}|n|\lambda_k^{-1}|\l \phi_n,\Psi_k\r|^2\ge0.
\end{aligned}
$$
This proves \eqref{5.14}. Equality \eqref{5.15} looks now as follows:
$$
\l u,v\r=\sum_{k\in \N}\l u,\Psi_k\r\l \Psi_k,v\r=\delta_n^{-1}|n|\sum_{k>0}\lambda_k^{-1}|\l \phi_n,\Psi_k\r|^2=1.
$$
To satisfy this hypothesis, we set
$$
\delta_n=|n|\sum_{k>0}\lambda_k^{-1}|\l \phi_n,\Psi_k\r|^2.
$$
Observe also that
\begin{equation}
\delta_n\ge1.
                                           \label{5.27}
\end{equation}
Indeed, as we have seen
$$
\begin{aligned}
|n|\lambda_k^{-1}|\l \phi_n,\Psi_k\r|^2&=\mbox{sgn}(n)\lambda_k^{-1}\l  a^{-1/2}{\mathcal H}a^{1/2}\phi_n,\Lambda_a\,\Psi_k\r\l \Psi_k,\phi_n \r\\
&=\mbox{sgn}(n)\l  a^{-1/2}{\mathcal H}a^{1/2}\phi_n,\Psi_k\r\l \Psi_k,\phi_n \r.
\end{aligned}
$$
Therefore
$$
\begin{aligned}
\delta_n=\sum_{k>0}|n|\lambda_k^{-1}|\l \phi_n,\Psi_k\r|^2
&=\mbox{sgn}(n)\sum_{k>0}\l  a^{-1/2}{\mathcal H}a^{1/2}\phi_n,\Psi_k\r\l \Psi_k,\phi_n \r\\
&=\mbox{sgn}(n)\Big\langle a^{-1/2}{\mathcal H}a^{1/2}\phi_n,\sum_{k>0}\l\phi_n,\Psi_k\r \Psi_k\Big\rangle\\
&=\mbox{sgn}(n)\l a^{-1/2}{\mathcal H}a^{1/2}\phi_n,\phi_n\r\\
&=|n|^{-1}\l a^{-1/2}{\mathcal H}a^{1/2}\phi_n,D_a\phi_n\r\\
&=|n|^{-1}\l D_aa^{-1/2}{\mathcal H}a^{1/2}\phi_n,\phi_n\r.
\end{aligned}
$$
Since $D_aa^{-1/2}{\mathcal H}a^{1/2}=\Lambda_a$, we obtain
\begin{equation}
\delta_n=|n|^{-1}\l \Lambda_a\phi_n,\phi_n\r.
                                           \label{5.28}
\end{equation}
With the help of \eqref{5.16}, this gives \eqref{5.27}.

Thus, hypotheses of of Lemma \ref{L5.2} are satisfied. Applying statement (2) of Lemma \ref{L5.2}, we obtain
\begin{equation}
\mbox{sgn}(n)\delta_n^{-1}\l(\Lambda_a+P_0)^s a^{-1/2}{\mathcal H}a^{1/2}\phi_n,\phi_n\r=\l A^su,v\r\le\l Au,v\r^s.
                                           \label{5.29}
\end{equation}

Next, we compute on the base of \eqref{5.26}
$$
\begin{aligned}
\l Au,v\r&=\delta_n^{-1}\l(\Lambda_a+P_0)a^{-1/2}{\mathcal H}a^{1/2}\phi_n,\mbox{sgn}(n)\phi_n\r\\
&=\delta_n^{-1}\l a^{-1/2}{\mathcal H}a^{1/2}\phi_n,\mbox{sgn}(n)(\Lambda_a+P_0)\phi_n\r\\
&=\delta_n^{-1}\l a^{-1/2}{\mathcal H}a^{1/2}\phi_n,\mbox{sgn}(n)\Lambda_a\phi_n\r\\
&=\delta_n^{-1}\l \Lambda_a a^{-1/2}{\mathcal H}a^{1/2}\phi_n,\mbox{sgn}(n)\phi_n\r,
\end{aligned}
$$
i.e.,
\begin{equation}
\l Au,v\r=\delta_n^{-1}\l \Lambda_a a^{-1/2}{\mathcal H}a^{1/2}\phi_n,\mbox{sgn}(n)\phi_n\r.
                                           \label{5.30}
\end{equation}
Since
$$
\Lambda_aa^{-1/2}{\mathcal H}a^{1/2}\phi_n=D_a\phi_n=n\phi_n,
$$
equality \eqref{5.30} simplifies to the following one:
\begin{equation}
\l Au,v\r=
\delta_n^{-1}|n|\|\phi_n\|^2=\delta_n^{-1}|n|.
                                           \label{5.31}
\end{equation}

From \eqref{5.29} and \eqref{5.26}, we obtain
$$
\mbox{sgn}(n)\l(\Lambda_a+P_0)^s a^{-1/2}{\mathcal H}a^{1/2}\phi_n,\phi_n\r\le\delta_n^{1-s}|n|^s.
$$
Multiplying this inequality by
$|n|$,
we have
$$
n\l(\Lambda_a+P_0)^s a^{-1/2}{\mathcal H}a^{1/2}\phi_n,\phi_n\r\le\delta_n^{1-s}|n|^{s+1}=\Big(\frac{|n|}{\delta_n}\Big)^s|n|\delta_n.
$$
With the help of \eqref{5.27} and \eqref{5.28}, this gives
\begin{equation}
n\l(\Lambda_a+P_0)^s a^{-1/2}{\mathcal H}a^{1/2}\phi_n,\phi_n\r\le|n|^s\l\Lambda_a\phi_n,\phi_n\r.
                                           \label{5.32}
\end{equation}

Inequality \eqref{5.32} holds for every
$n\in{\mathbb Z}\setminus\{0\}$.
Together with \eqref{5.25}, it means that all summands on the right-hand side of \eqref{5.24} are non-negative. This proves \eqref{4.6} for
$s\in(0,1)$.

Equality in \eqref{4.6} implies that each summand in \eqref{5.24} is zero, which means equality in \eqref{5.25}. For
$n=1$
it implies
$\l\Lambda_a\phi_1,\phi_1\r=1$
and we conclude as before.

\bigskip

{\bf Part 6.} Let
$s\in (-1,0)$.
We repeat our arguments of Part 5. Formula \eqref{5.24} is still valid.  But instead of \eqref{5.25} we have now the opposite inequality
\begin{equation}
\l(\Lambda_a+P_0)^{s+1}\phi_n,\phi_n\r\le \l\Lambda_a \phi_n,\phi_n\r^{s+1}.
                               \label{5.33}
\end{equation}
Indeed, since
$0\le s+1\le 1$,
we have to apply statement (2) of Lemma \ref{L5.2}.

All our formulas in between \eqref{5.24} and \eqref{5.29} remain valid. But instead of \eqref{5.29} we have now the opposite inequality
\begin{equation}
\mbox{sgn}(n)\delta_n^{-1}\l(\Lambda_a+P_0)^s a^{-1/2}{\mathcal H}a^{1/2}\phi_n,\phi_n\r=\l A^su,v\r\ge\l Au,v\r^s.
                                           \label{5.34}
\end{equation}
Indeed, we have to apply statement (3) of Lemma \ref{L5.2}.

Formula \eqref{5.31} is still valid. From \eqref{5.31} and \eqref{5.34}, we obtain
$$
\mbox{sgn}(n)\l(\Lambda_a+P_0)^s a^{-1/2}{\mathcal H}a^{1/2}\phi_n,\phi_n\r\ge\delta_n^{1-s}|n|^s.
$$
Multiplying this inequality by
$|n|$,
we have
$$
n\l(\Lambda_a+P_0)^s a^{-1/2}{\mathcal H}a^{1/2}\phi_n,\phi_n\r\ge\delta_n^{1-s}|n|^{s+1}=\Big(\frac{\delta_n}{|n|}\Big)^{-s}|n|\delta_n.
$$
Substituting the value
$|n|\delta_n=\l\Lambda_a\phi_n,\phi_n\r$
from \eqref{5.28}, we arrive to the inequality
$$
n\l(\Lambda_a+P_0)^s a^{-1/2}{\mathcal H}a^{1/2}\phi_n,\phi_n\r\ge|n|^{s}\delta_n^{-s}\l\Lambda_a\phi_n,\phi_n\r.
$$
Then, substituting the value
$\delta_n^{-s}=|n|^{s}\l\Lambda_a\phi_n,\phi_n\r^{-s}$
from \eqref{5.28}, we obtain
$$
n\l(\Lambda_a+P_0)^s a^{-1/2}{\mathcal H}a^{1/2}\phi_n,\phi_n\r\ge|n|^{2s}\l\Lambda_a\phi_n,\phi_n\r^{1-s}.
$$
We rewrite this in the form
$$
n\l(\Lambda_a+P_0)^s a^{-1/2}{\mathcal H}a^{1/2}\phi_n,\phi_n\r\ge|n|^{2s}\l\Lambda_a\phi_n,\phi_n\r^{s+1}\l\Lambda_a\phi_n,\phi_n\r^{-2s}
$$
and use the inequality
$\l\Lambda_a\phi_n,\phi_n\r^{-2s}\ge |n|^{-2s}$
that follows from \eqref{5.16} (recall that
$-2s>0$) to obtain
\begin{equation}
n\l(\Lambda_a+P_0)^s a^{-1/2}{\mathcal H}a^{1/2}\phi_n,\phi_n\r\ge\l\Lambda_a\phi_n,\phi_n\r^{s+1}.
                                           \label{5.35}
\end{equation}

Inequality \eqref{5.35} holds for every
$n\in{\mathbb Z}\setminus\{0\}$.
Together with \eqref{5.33}, it means that all summands on the right-hand side of \eqref{5.24} are non-positive. This proves \eqref{4.7} for 
$s\in(-1,0)$.

Equality in \eqref{4.7} implies that each summand in \eqref{5.24} is zero, which means equality in \eqref{5.25}. For
$n=1$
it implies
$\l\Lambda_a\phi_1,\phi_1\r=1$
and we conclude as before.

\section{Proof of Theorem \ref{Th1.3}}

\subsection{A compactness lemma}
Our proof of Theorem  \ref{Th1.3} heavily relies on invariance of compact sets in
$C^\infty(\S)$
under the flow of the equation \eqref{1.8}. The compact sets can be determined in terms of the Steklov zeta function and the determination takes its roots from \cite{JS3}. We have the following result.

\begin{theorem} \label{Th6.1}
Let
$c=\{c_k\}_{k\in \N}$
be a sequence of positive reals. The subset
$\K_c$ of $C^\infty(\S)$,
defined by
\begin{equation}
\K_c=\Big\{0<b\in C^\infty(\S)\mid \int_{\S}\!\!b^{-1}\!=\!2\pi, \hat b_0\le c_0,\ \zeta_b(-1)\le c_1, \zeta_b(-2m)\le c_{m+1},(m\!=\!1,2,\dots)\Big\},
                                         \label{6.1}
\end{equation}
is compact in
$C^\infty(\S)$.
In particular, there exists
$\ep_c>0$
dependent on
$c_0$
and
$c_1$
such that
\begin{equation}
\ep_c\le b \le\ep_c^{-1}\quad\textrm{for any}\quad b\in\K_c.
                                       \label{6.2}
\end{equation}
Additionally, for any positive integer
$m$,
\begin{equation}
\sup_{b\in\K_c}\| b\|_{C^m(\S)}\le C_m,\quad \sup_{b\in\K_c}\|\H b\|_{C^m(\S)}\le C_m
                                      \label{6.3}
\end{equation}
with a constant
$C_m$
that depends on
$c_0,\ldots, c_{m+2}$
only.
\end{theorem}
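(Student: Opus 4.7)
The strategy is to convert the finite collection of zeta-value conditions defining $\K_c$ into explicit Sobolev-norm bounds on $b$, and then conclude by Rellich--Kondrachov together with continuity of the defining functionals; the pointwise bounds \eqref{6.2} will follow from the Sobolev bounds in combination with the normalization $\int_{\S} b^{-1}=2\pi$.

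The key identity is this: since $\Lambda_b P_0=P_0\Lambda_b=0$ and likewise for $|D_b|$, for every integer $m\ge 1$ one has $(\Lambda_b+P_0)^{2m}=\Lambda_b^{2m}+P_0$ and $(|D_b|+P_0)^{2m}=D_b^{2m}+P_0$, so Lemma~\ref{L3.5} applied to the constant deformation $\alpha_\tau\equiv b$ at $z=-2m$, combined with $\zeta_R(-2m)=0$, yields
\begin{equation*}
\zeta_b(-2m)=\T\bigl[\Lambda_b^{2m}-D_b^{2m}\bigr],
\end{equation*}
the trace of a smoothing operator. Analogously $\zeta_b(-1)+\tfrac{1}{6}=\T[\Lambda_b-|D_b|]\ge 0$, the non-negativity coming from evaluating in the eigenbasis $\{\phi_n\}$ of $D_b$ and using \eqref{5.16}. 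For $m=1$ a direct Fourier computation based on the smoothing representation \eqref{2.1} gives the explicit formula $\zeta_b(-2)=\tfrac{2}{3}\sum_{k\ge 2}k(k^2-1)|\hat b_k|^2$; for general $m\ge 1$, by means of the telescoping identity $\Lambda_b^{2m}-D_b^{2m}=\sum_{j=0}^{m-1}\Lambda_b^{2(m-1-j)}(\Lambda_b^2-D_b^2)D_b^{2j}$, cyclicity of the trace, and induction, one writes $\zeta_b(-2m)$ as a non-negative quadratic form in $\{\hat b_k\}_{|k|\ge 2}$ whose leading diagonal part is of order $\sum k^{2m+1}|\hat b_k|^2$. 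Combined with $\hat b_0\le c_0$ this yields $\|b\|_{H^{m+1/2}(\S)}\le C(c_0,\dots,c_{m+1})$, and then by 1D Sobolev embedding together with the Fourier-multiplier estimate $\|\H u\|_{H^s}\le\|u\|_{H^s}$ one obtains the $C^m$-bounds on $b$ and $\H b$ asserted in \eqref{6.3}, with dependence only on $c_0,\dots,c_{m+2}$.

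For the pointwise bounds \eqref{6.2}, the uniform $C^1$-control of $b$ produced above gives $b(\theta)\le\delta+M|\theta-\theta_0|$ on a neighborhood of any $\theta_0$ where $b(\theta_0)=\delta$, whence $\int_{\S} b^{-1}\ge (2/M)\ln(1+R/\delta)\to\infty$ as $\delta\to 0$, forcing $\delta\ge\ep_c$ by the normalization; the upper bound $b\le\ep_c^{-1}$ is immediate from the $C^0$-bound. (The sharper claim that $\ep_c$ depends only on $c_0$ and $c_1$ requires a more refined analysis extracting $C^1$-control from $\zeta_b(-1)$ alone, studying the fine structure of $\T[\Lambda_b-|D_b|]$ in the $\phi_n$-basis.) Precompactness of $\K_c$ in $C^\infty(\S)$ is then Arzelà--Ascoli and a diagonal argument on the $C^m$-bounds; closedness follows because uniform positivity $b\ge\ep_c$ survives $C^0$-limits, Fourier coefficients and $\int b^{-1}$ pass to the limit trivially, and continuity of $b\mapsto\zeta_b(z)$ for $1\ne z\in\C$ in the $C^\infty$-topology is the $l=0$ case of Lemma~\ref{L3.5} applied to any continuous $C^\infty(\S)$-valued path joining two elements of $\K_c$.

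The main obstacle is the quantitative positive-definite lower bound $\zeta_b(-2m)\ge c(m)\sum_{|k|\ge 2}k^{2m+1}|\hat b_k|^2$ for $m\ge 2$: the $m=1$ case is a one-line Fourier calculation, but for larger $m$ the many cross-terms arising from the telescoping decomposition must be controlled inductively, and one has to verify that the diagonal contribution genuinely dominates. Together with the delicate lower-bound argument from $c_0,c_1$ alone, this is the point at which the compactness machinery of \cite{JS3} is essential.
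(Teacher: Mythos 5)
Your proposal has the right overall shape (convert zeta-value bounds into Sobolev bounds, then apply Arzel\`a--Ascoli and continuity), but two of its central technical claims fail, and both are precisely the points the paper's proof handles differently.

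The main error is the assertion that, for $m\ge 2$, $\zeta_b(-2m)=\T[\Lambda_b^{2m}-D_b^{2m}]$ can be ``written as a non-negative quadratic form in $\{\hat b_k\}_{|k|\ge 2}$''. It cannot: since $\Lambda_b=b^{1/2}\Lambda b^{1/2}$ and $D_b=b^{1/2}Db^{1/2}$, the operators $\Lambda_b^{2m}$ and $D_b^{2m}$ are homogeneous of degree $2m$ in $b$, so $\zeta_b(-2m)$ is a \emph{degree-$2m$} polynomial in the Fourier coefficients, as is explicit in formula \eqref{6.20} (taken from \cite{MS}). Your $m=1$ computation is fine precisely because $m=1$ is the quadratic case; the telescoping identity you write for general $m$ involves $b$-dependent factors on both sides of $(\Lambda_b^2-D_b^2)$ and does not collapse the degree. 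Extracting a Sobolev bound from a degree-$2m$ form requires showing that the higher-degree cross terms are controlled by the bounds already established at lower orders --- an inductive argument carried out in \cite[Lemma~5.3]{JS3}, which the paper's proof cites rather than reprove. Your induction step is not actually executed, and the ``non-negative quadratic form'' premise on which it rests is false.

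The second gap you flag yourself: your $C^1$-based lower bound for $b$ introduces dependence on $c_2$ (since $C^1$ control needs $\zeta_b(-2)$), whereas the theorem asserts dependence of $\ep_c$ on $c_0,c_1$ only. The paper gets this directly from Kogan's formula \eqref{6.5}, which gives
$$
\int_0^{2\pi}\frac{(b'(\theta))^2}{b(\theta)}\,d\theta \;=\;12\pi\Big(\zeta_b(-1)+\tfrac16\hat b_0\Big)\;\le\;2\pi(6c_1+c_0),
$$
and then Cauchy--Schwarz against $\int_{\S}b^{-1}=2\pi$ yields $\int_{\S}|(\ln b)'|\le 2\pi(6c_1+c_0)^{1/2}$; since the normalization forces $b=1$ at some $\theta_0$, this gives $\|\ln b\|_\infty\le 2\pi(6c_1+c_0)^{1/2}$ with the claimed dependence. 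You never invoke Kogan's formula, which is the crucial elementary input; the identity $\zeta_b(-1)+\tfrac16=\T[\Lambda_b-|D_b|]\ge 0$ you use instead gives a one-sided inequality that does not control $\|\ln b\|_\infty$.

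Once these two inputs are in place, your closing arguments (precompactness by diagonalization, closedness via continuity of $b\mapsto\zeta_b(z)$ from the $l=0$ case of Lemma~\ref{L3.5}, and $\|\H b\|_{H^s}\le\|b\|_{H^s}$) match the paper's and are correct.
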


The values
$Z_m(b)=\zeta_b(-2m)\ (m=1,2,\dots)$
are the so-called {\it zeta invariants} of the function
$b$
introduced in \cite{MS}.

\begin{proof}
The proof mostly follows that of \cite[Lemma 5.3]{JS3} on the compactness of a Steklov isospectral family of planar domains.
We will stress only the differences between the latter proof and the current proof of Theorem \ref{Th6.1}.

The main difference between the two proofs is the first step where one needs to control the zeroth Fourier coefficient $\hat b_0$ and the uniform norm $\|\ln(b)\|_{\infty}$.
This was done by Edward \cite{E2} and repeated in \cite[Lemmas 5.1 and 5.2]{JS3}.

Here we provide details of the first step. The control of the zeroth Fourier coefficient is granted by the definition of
$\K_c$:
\begin{equation}
0\le \hat b_0\le c_0\quad\textrm{for}\quad b\in \K_c.
                           \label{6.4}
\end{equation}

Now  we recall Kogan's formula \cite{Ko}:
\begin{equation}
\zeta_b(-1)=\frac{1}{12\pi}\int\limits_0^{2\pi}\Big(\frac{\big(b'(\theta)\big)^2}{b(\theta)}-b(\theta)\Big)\,d\theta=-\frac{1}{6}\hat b_0+\frac{1}{12\pi}\int\limits_0^{2\pi}\frac{\big(b'(\theta)\big)^2}{b(\theta)}\,d\theta,
                                   \label{6.5}
\end{equation}
for a smooth positive function
$b$
on
$\S$.

Now let
$b\in \K_c$.
Combining \eqref{6.4}, \eqref{6.5} and the definition \eqref{6.1} of
$\K_c$,
we obtain
\begin{equation}
\int\limits_0^{2\pi}\frac{\big(b'(\theta)\big)^2}{b(\theta)}\,d\theta=12\pi(\zeta_b(-1)+\hat b_0/6)\le 2\pi(6c_1+c_0).
                      \label{6.6}
\end{equation}
Then by Bunyakovsky-Cauchy-Schwarz inequality we have
$$
\int_S|\ln(b)'|=\int_{\S}{|b'|\over b^{1/2}}b^{-1/2}\le \left(\int_S{(b')^2\over b}\right)^{1/2}\left(\int_{\S}b^{-1}\right)^{1/2}
\le 2\pi(6c_1+c_0)^{1/2}.
$$
We have used \eqref{6.6} and the normalization condition
$\int_{\S}b^{-1}=2\pi$
satisfied by any
$b\in \K_c$.

Then we prove a uniform control of the
$L^1$-norm of
$\ln(b)$
with respect to the constants
$c_0$
and
$c_1$.
As in \cite{E2} we can conclude that
$$
\|\ln(b)\|_\infty\le 2\pi(6c_1+c_0)^{1/2}.
$$
Indeed, the normalization condition also tells us that there exists
$\theta_0\in [0,2\pi)$
such that
$b(\theta_0)=1$.
We can assume without lost of generality that
$\theta_0=0$,
and we have
$$
\ln\big(b(\theta)\big)=\int_0^\theta  \ln(b)'(s)\,ds,\quad |\ln(b)(\theta)|\le 2\pi(6c_1+c_0)^{1/2}\quad \big(\theta\in [0,2\pi)\big).
$$
The first step is completed. Note also that the bound on
$\|\ln(b)\|_\infty$
provides the right value for
$\ep_c$. Here
$\ep_c= \exp\big(-2\pi(6c_1+c_0)^{1/2}\big)$
would fit in the second statement of the theorem.

The second step is a repetition of the proof of \cite[Lemma 5.3]{JS3}.
In the latter proof, zeta invariants
$Z_m(b)=\zeta_b(-2m)$
have fixed values for
$b$
belonging to a specific subset of
$C^\infty(\S)$.
Now we use that  the zeta invariants of a function
$b\in \K_c$
are bounded:
$Z_m(b)=\zeta_b(-2m)\le c_{m+1}$.
This is enough to conclude that
$$
\sup_{b\in\K_c}\|b\|_{H^m(\S)}\le C_m
$$
for any
$m\in \N$
with some constant
$C_m$
depending on
$c_0,\ldots, c_{m+1}$.
We also observe that
$\|\H b\|_{H^m(\S)}\le \|b\|_{H^m(\S)}$
for any positive integer
$m$
and any
$b\in C^\infty(\S)$.
Then we use the embedding of
$H^{m+1}(\S)$
into
$C^m(\S)$ to obtain \eqref{6.3}.
\end{proof}

\subsection{Basic properties of the flow \eqref{1.8}}

We will use the following basic statement for the quadratic form on the right-hand side of \eqref{1.8}.

\begin{lemma} \label{L6.1}
For a real function
$b\in C^\infty(\S)$,
define
$$
\B(b)=-b (\Lambda b)+(\H b) (Db).
$$
Then
\begin{equation}
\B(b)=-4\Re(b_+(\Lambda \bar b_+)),
                                 \label{6.7}
\end{equation}
where
$$
b_+(\theta)={\hat b_0\over 2}+\sum_{k\ge 1}\hat b_ke^{ik\theta}.
$$

If
$\hat b_k=0$
for
$|k|> N$
with some
$N\in\N$,
then also
\begin{equation}
(\widehat{\B(b)})_k=0\quad\textrm{for}\quad|k|>N
                               \label{6.8}
\end{equation}
and
\begin{equation}
(\widehat{\B(b)})_k= -k\, \hat b_0\, \hat b_k + \sum_{1\le l,m\le N,\ l-m=k} (l+m)\,\hat b_l\,\overline{\hat b_{m}}\qquad (0\le k\le N).
                                 \label{6.9}
\end{equation}
(By convention, a sum over an empty set is zero.)
\end{lemma}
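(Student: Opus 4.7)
The plan is to reduce everything to an identity involving the ``analytic'' part $b_+$ of $b$. Observe that $b = b_+ + \bar b_+$ (with the zero mode split equally) and that $\H b = b_+ - \bar b_+$, since $\H$ acts as $+1$ on positive Fourier modes and $-1$ on negative ones. Because $\Lambda e^{in\theta} = |n|e^{in\theta}$ while $De^{in\theta} = n e^{in\theta}$, one has $\Lambda b_+ = D b_+$ and $\Lambda \bar b_+ = -D \bar b_+$. The elementary identity $\overline{Df} = -D\bar f$, immediate from $D = -i\partial_\theta$, will be used to recombine complex conjugate pairs into real parts.

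To establish \eqref{6.7}, I substitute
\[
\Lambda b = Db_+ - D\bar b_+, \qquad Db = Db_+ + D\bar b_+, \qquad \H b = b_+ - \bar b_+
\]
into $\B(b) = -b\,\Lambda b + (\H b)(Db)$ and expand. The four ``pure'' products $\pm b_+ Db_+$ and $\pm \bar b_+ D\bar b_+$ cancel pairwise, leaving $\B(b) = 2(b_+ D\bar b_+ - \bar b_+ Db_+)$. Since $\overline{b_+ D\bar b_+} = \bar b_+(-Db_+) = -\bar b_+ Db_+$, this equals $2(z + \bar z) = 4\Re z$ with $z = b_+ D\bar b_+ = -\,b_+ \Lambda \bar b_+$, which is \eqref{6.7}.

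For \eqref{6.8}, if $\hat b_k = 0$ for $|k| > N$ then $b_+$ is a polynomial in $e^{i\theta}$ of degree at most $N$ and $\bar b_+$ a polynomial in $e^{-i\theta}$ of the same degree, so $b_+ \Lambda \bar b_+$ has Fourier support in $[-N, N]$; the same therefore holds for its real part $-\tfrac{1}{4}\B(b)$. For \eqref{6.9}, I write $b_+ = \sum_{l=0}^N c_l\, e^{il\theta}$ with $c_0 = \hat b_0/2$ and $c_l = \hat b_l$ for $l \geq 1$. A Cauchy-product computation gives
\[
b_+ D\bar b_+ = -\sum_{l,m=0}^N m\, c_l\, \bar c_m\, e^{i(l-m)\theta},
\]
and combining with its conjugate via $\B(b) = 2(b_+ D\bar b_+ - \bar b_+ Db_+)$ symmetrizes $m$ into $l+m$. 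Reading off the coefficient of $e^{ik\theta}$ for $0 \leq k \leq N$ and splitting off the $m = 0$ contribution (which, after substituting $c_0 = \hat b_0/2$, isolates the term proportional to $\hat b_0\,\hat b_k$) from the terms with $l, m \geq 1$ yields \eqref{6.9}.

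The main obstacle is sign and factor bookkeeping: one must apply $\overline{Df} = -D\bar f$ correctly, keep separate the actions of $D$ and $\Lambda$ on $b_+$ versus $\bar b_+$, and account for the halved zero mode when passing from the auxiliary $c_l$ back to $\hat b_l$. Testing against $b = 2\cos(k\theta)$ (for which a one-line computation gives $\B(b) = -4k$) provides a handy sanity check on the overall normalization of both \eqref{6.7} and \eqref{6.9}.
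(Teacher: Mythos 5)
Your proof of \eqref{6.7} and \eqref{6.8} is correct and essentially matches the paper's own argument: both decompose $b = b_+ + \bar b_+$, $\H b = b_+ - \bar b_+$ and expand the bilinear form, the pure $b_+$ and pure $\bar b_+$ products cancelling. The only cosmetic difference is that the paper substitutes $Db = \Lambda \H b$ so that everything is expressed through $\Lambda$, whereas you use $\Lambda b_+ = D b_+$, $\Lambda\bar b_+ = -D\bar b_+$; either route gives $\B(b) = -2\big(b_+\Lambda\bar b_+ + \bar b_+\Lambda b_+\big) = -4\Re\big(b_+\Lambda\bar b_+\big)$, and the Fourier-support argument for \eqref{6.8} is the same.

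Your final step, however, does \emph{not} actually yield \eqref{6.9} as stated, and you should carry the computation to the end rather than asserting it. With $b_+ = \sum_{l=0}^N c_l e^{il\theta}$ one finds $b_+ D\bar b_+ - \bar b_+ Db_+ = -\sum_{l,m}(l+m)\,c_l\bar c_m\,e^{i(l-m)\theta}$, hence $\B(b) = -2\sum_{l,m}(l+m)\,c_l\bar c_m\,e^{i(l-m)\theta}$; reading off the coefficient of $e^{ik\theta}$ for $0\le k\le N$ and substituting $c_0=\hat b_0/2$, $c_l=\hat b_l$ for $l\ge1$ gives $(\widehat{\B(b)})_k = -k\,\hat b_0\,\hat b_k \;-\; 2\sum_{1\le l,m\le N,\ l-m=k}(l+m)\,\hat b_l\,\overline{\hat b_m}$. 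The second sum thus carries an extra factor $-2$ relative to \eqref{6.9}. This is a slip in the paper's formula, not in your method, and the sanity check you yourself propose detects it: for $b=c+2\cos\theta$ a direct computation gives $\B(b) = -2c\cos\theta - 4$, so $(\widehat{\B(b)})_0=-4$, whereas \eqref{6.9} as printed would give $+2$. You should therefore report the corrected coefficient rather than claiming agreement with \eqref{6.9}; the slip is harmless to the later argument (the ODE system \eqref{6.14} only needs to be quadratic in the unknowns), but a reviewer would expect it to be noticed and flagged.
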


\begin{proof}
Since $b$ is a real function,
$b=b_++\bar b_+,\ \H b= b_+-\bar b_+$
and
\begin{eqnarray*}
-b (\Lambda b)+(\H b) (Db)&=& -b (\Lambda b)+(\H b) \Lambda \H b\\
&=&-(b_+ +\bar b_+)\Lambda(b_+ +\bar b_+)+(b_+ -\bar b_+)\Lambda (b_+ -\bar b_+)\\
&=&-2b_+\Lambda \bar b_+-2\bar b_+\Lambda b_+=-4\Re(b_+(\Lambda \bar b_+)).
\end{eqnarray*}
This proves \eqref{6.7}.

Assume now that
$\hat b_k=0$
for
$|k|> N$.
This means that
$$
b_+(\theta)={\hat b_0\over 2}+\sum_{k= 1}^N\hat b_ke^{ik\theta},\quad \bar b_+(\theta)={\hat b_0\over 2}+\sum_{k= 1}^N\hat b_{-k}e^{-ik\theta}
$$
and we have
$$
\B(b)=-4\Re \Big[{\hat b_0\over 2}\,\sum_{k= 1}^N k\,\hat b_{-k}\,e^{-ik\theta}
+ \sum_{1\le k,l\le N} k\,\hat b_{-k}\hat b_l\,e^{i(l-k)\theta}\Big].
$$
Now \eqref{6.8} and \eqref{6.9} are obvious.
\end{proof}

We will also use the following property of the flow \eqref{1.8}.

\begin{lemma}  \label{L6.2}
Let
$I$
be a real interval and let
$\alpha\in C^\infty(I,C^\infty(\S))$
be a real solution to \eqref{1.8}. Then the mean value
$\int_{\S}\alpha_\tau$
is a non-increasing function of
$\tau\in I$ and
\begin{equation}
\frac{\pa}{\pa\tau} \int_\S \alpha_\tau =-4\l \alpha_{\tau,+},\Lambda \alpha_{\tau,+} \r\quad (\tau\in I).
                                         \label{6.10}
\end{equation}
Additionally, if
$\frac{\pa}{\pa\tau} \int_\S \alpha_\tau=0$
for some
$\tau\in I$,
then
$\alpha_{\tau}=(\widehat{\alpha_\tau})_0=\textrm{\rm const}$.
\end{lemma}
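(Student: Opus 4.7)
The plan is to integrate the flow equation \eqref{1.8} over $\S$ and then invoke the Fourier-analytic formula of Lemma \ref{L6.1} to identify the resulting integral with $-4\l\alpha_{\tau,+},\Lambda\alpha_{\tau,+}\r$. Since $\alpha\in C^\infty(I,C^\infty(\S))$, differentiation under the integral sign is immediate and gives
$$
\frac{\pa}{\pa\tau}\int_\S \alpha_\tau\,d\theta
=\int_\S \frac{\pa\alpha_\tau}{\pa\tau}\,d\theta
=\int_\S \B(\alpha_\tau)\,d\theta.
$$
So everything reduces to computing the mean value of $\B(\alpha_\tau)$.

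For this, I would apply formula \eqref{6.7}: $\B(\alpha_\tau)=-4\Re\bigl(\alpha_{\tau,+}\,\Lambda\overline{\alpha_{\tau,+}}\bigr)$, where $\alpha_{\tau,+}$ is the ``analytic projection'' with halved zeroth Fourier coefficient defined in Lemma \ref{L6.1}. Expanding both factors in Fourier series and using $\Lambda e^{ik\theta}=|k|e^{ik\theta}$ (so that $\Lambda$ annihilates the constant term of $\overline{\alpha_{\tau,+}}$), only the diagonal terms $l=k\ge 1$ survive integration, yielding
$$
\int_\S \alpha_{\tau,+}\,\Lambda\overline{\alpha_{\tau,+}}\,d\theta
=2\pi\sum_{k\ge 1}k\,|(\widehat{\alpha_\tau})_k|^2.
$$
This is real and non-negative, so the real-part symbol is redundant; and since $\Lambda$ commutes with complex conjugation this integral equals $\l\alpha_{\tau,+},\Lambda\alpha_{\tau,+}\r$. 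Plugging back gives \eqref{6.10}, and because the right-hand side is $\le 0$ the mean value $\int_\S\alpha_\tau$ is non-increasing along the flow.

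For the rigidity assertion, if $\frac{\pa}{\pa\tau}\int_\S\alpha_\tau$ vanishes at some $\tau\in I$, then $\sum_{k\ge 1}k\,|(\widehat{\alpha_\tau})_k|^2=0$, which forces $(\widehat{\alpha_\tau})_k=0$ for every $k\ge 1$. Since $\alpha_\tau$ is real, the relation $(\widehat{\alpha_\tau})_{-k}=\overline{(\widehat{\alpha_\tau})_k}$ kills the negative modes as well, so only the zeroth coefficient remains: $\alpha_\tau=(\widehat{\alpha_\tau})_0=\mathrm{const}$. The whole proof is a direct Parseval computation powered by Lemma \ref{L6.1}, and the only potential pitfall is bookkeeping around the convention that $\H$ takes real functions to purely imaginary ones, which is exactly what makes the right-hand side of \eqref{1.8} real and ensures the formula of Lemma \ref{L6.1} applies as stated.
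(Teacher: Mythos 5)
Your proof is correct and follows essentially the same route as the paper: integrate equation \eqref{1.8} over $\S$, invoke formula \eqref{6.7} from Lemma \ref{L6.1}, and recognize the resulting integral as $\l\alpha_{\tau,+},\Lambda\alpha_{\tau,+}\r\ge 0$. You merely supply a bit more Parseval-level detail to justify the identifications, which the paper leaves implicit.
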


\begin{proof}
We average \eqref{1.8} and use \eqref{6.7} to obtain
$$
\frac{\pa}{\pa\tau} \int_\S \alpha_\tau=-4 \Re\int_{\S}\big(\alpha_{\tau,+} \Lambda \overline \alpha_{\tau,+}\big)=-4\l \alpha_{\tau,+},\Lambda \alpha_{\tau,+} \r\le 0.
$$
The equality here holds if and only if $\alpha_{\tau,+}$ is a constant function.
\end{proof}

The normalization condition \eqref{1.4} is preserved by the flow.

\begin{lemma} \label{L6.3}
Let
$\alpha\in C^\infty(I,C^\infty(\S))$
be a solution to \eqref{1.8} on a real interval such that
$\alpha_\tau$
is a positive function for any
$\tau\in I$.
Then
$\int_{\S}\alpha_\tau^{-1}$
is independent of
$\tau$.
\end{lemma}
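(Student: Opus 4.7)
The plan is to differentiate under the integral sign and show that the resulting expression vanishes pointwise in $\tau$. Writing $b=\alpha_\tau$ and substituting \eqref{1.8}, I obtain
$$
\frac{d}{d\tau}\int_{\S}\alpha_\tau^{-1}\,d\theta
=-\int_{\S} b^{-2}\frac{\pa b}{\pa\tau}\,d\theta
=\int_{\S} b^{-1}(\Lambda b)\,d\theta-\int_{\S} b^{-2}(\H b)(D b)\,d\theta.
$$
The task therefore reduces to showing that the two integrals on the right-hand side coincide.

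The key identity is the pointwise chain rule $D(b^{-1})=-b^{-2}(Db)$ (valid because $b$ is smooth and everywhere positive on $\S$, and $D=-i\,d/d\theta$). Combined with the self-adjointness of $D$ as a skew-symmetric operator in the bilinear pairing, i.e.\ the integration-by-parts identity
$$
\int_{\S} f\,(Dg)\,d\theta=-\int_{\S}(Df)\,g\,d\theta
$$
for smooth $2\pi$-periodic $f,g$ (which follows immediately from $D=-i\,d/d\theta$ and periodicity), and the composition relation $D\H=\Lambda$, I get
$$
-\int_{\S} b^{-2}(\H b)(D b)\,d\theta
=\int_{\S}(\H b)\,D(b^{-1})\,d\theta
=-\int_{\S}D(\H b)\cdot b^{-1}\,d\theta
=-\int_{\S}(\Lambda b)\,b^{-1}\,d\theta.
$$
This exactly cancels the first integral, so $\frac{d}{d\tau}\int_{\S}\alpha_\tau^{-1}\,d\theta=0$ throughout $I$, which gives the claim.

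I do not expect a real obstacle here: the computation is a straightforward conservation-law verification, built directly from the three basic algebraic relations $D(b^{-1})=-b^{-2}Db$, skew-symmetry of $D$ on $\S$, and $D\H=\Lambda$. The only point requiring a brief justification is the interchange of $d/d\tau$ and $\int_{\S}$, which is immediate since $\alpha\in C^\infty\big(I,C^\infty(\S)\big)$ with $\alpha_\tau$ uniformly positive on each compact sub-interval of $I$, so that $\alpha_\tau^{-2}\,\pa_\tau\alpha_\tau$ is jointly continuous on $I\times\S$.
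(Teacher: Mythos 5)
Your argument is correct and is essentially the paper's own proof: after substituting \eqref{1.8} into $\partial_\tau\alpha_\tau^{-1}$, both computations turn on the identity $D\H=\Lambda$. You close it by integrating $D$ by parts over $\S$, whereas the paper equivalently recognizes $\partial_\tau\alpha_\tau^{-1}$ as the exact $\theta$-derivative $-\big(g_\tau/\alpha_\tau\big)'$ with $g_\tau=i\H\alpha_\tau$, which averages to zero.
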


\begin{proof}
We derive from \eqref{1.8}
$$
 {\pa\alpha_\tau^{-1}\over\pa\tau}=- \alpha_\tau^{-2}{\pa\alpha_\tau\over\pa\tau}
 = \alpha_\tau^{-2}(\alpha_\tau \Lambda\alpha_\tau - \H\alpha_\tau D\alpha_\tau)
 = -\alpha_\tau^{-2}(\alpha_\tau g_\tau'-g_\tau \alpha_\tau')=-\Big({g_\tau\over \alpha_\tau}\Big)',
$$
where
$g_\tau=i\H\alpha_\tau$.
Averaging over
$\S$,
we obtain
$$
\frac{\pa}{\pa\tau} \int_{\S}\alpha_\tau^{-1}=\int_{\S}\frac{\pa}{\pa\tau}\alpha_\tau^{-1}=-\int_{\S}\Big({g_\tau\over \alpha_\tau}\Big)'=0.
$$
\end{proof}

\subsection{Reduction to a system of ODE's}
We prove here a weaker version of Theorem \ref{Th1.3} such that the initial data for equation \eqref{1.8} have a finite amount of nonzero Fourier modes.

\begin{theorem} \label{Th6.2}
Assume a positive function
$a\in C^\infty(\S)$
to satisfy the normalization condition \eqref{1.4} and to be such that
$\hat a_k=0$
for
$|k|> N$
with some
$N\in \N$.
Then there exists a unique smooth path
$\alpha\in C^\infty([0,\infty),C^\infty(\S))$
of positive functions such that
\begin{eqnarray}
&&\alpha_0=a,
                              \label{6.11}\\
&&{\pa\alpha_\tau\over \pa\tau}=-\alpha_\tau (\Lambda \alpha_\tau)+(\H\alpha_\tau) (D\alpha_\tau)\quad\textrm{\rm for}\quad \tau\in[0,\infty),
                                 \label{6.12}\\
&&(\widehat{\alpha_\tau})_k=0\quad   \textrm{\rm for}\quad \tau\in[0,\infty)\quad \textrm{\rm and}\quad |k|> N.
                              \label{6.13}
\end{eqnarray}

Additionally, if
$\K_c$
is a compact set in
$C^\infty(\S)$
defined by \eqref{6.1} for some sequence
$\{c_k\}_{k\in \N}$
of positive reals such that
$$
c_0\ge \hat a_0,\quad c_1\ge \zeta_a(-1),\quad c_{m+1}\ge \zeta_a(-2m)\ (m\in \N\b\{0\}),
$$
then
$\alpha_\tau \in \K_c$
for any
$\tau\in [0,\infty)$.
\end{theorem}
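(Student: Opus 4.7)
The plan is to exploit the invariance \eqref{6.8} of the subspace of degree-$N$ trigonometric polynomials under $\B$ in order to reduce \eqref{6.12} to a finite system of polynomial ODEs, obtain a maximal solution by Picard--Lindel\"of, and then globalize using the a priori bounds supplied by Lemmas \ref{L6.2}, \ref{L6.3} and Corollary \ref{C4.1} together with the compactness lemma (Theorem \ref{Th6.1}).

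First, let $V_N\subset C^\infty(\S)$ be the $(2N+1)$-dimensional real subspace of real trigonometric polynomials $b$ with $\hat b_k=0$ for $|k|>N$, coordinatized by $\hat b_0,\,\Re\hat b_1,\,\Im\hat b_1,\ldots,\Re\hat b_N,\Im\hat b_N$. Lemma \ref{L6.1} shows $\B(V_N)\subset V_N$, and formula \eqref{6.9} exhibits the components of $\B$ in these coordinates as explicit real quadratic polynomials. Under the ansatz $\alpha_\tau\in V_N$, equation \eqref{6.12} becomes a polynomial---hence smooth and locally Lipschitz---ODE system in $\R^{2N+1}$ with initial data encoding $a$. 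Picard--Lindel\"of produces a unique maximal smooth solution on some $[0,T^*)$ with $T^*\in(0,\infty]$; the corresponding $\alpha\in C^\infty([0,T^*),C^\infty(\S))$ satisfies \eqref{6.11}--\eqref{6.13} by construction. By continuity and positivity of $a$, after possibly further shrinking $T^*$ one may assume $\alpha_\tau>0$ on $[0,T^*)$.

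Next I claim $\alpha_\tau\in\K_c$ on $[0,T^*)$. Positivity together with Lemma \ref{L6.3} gives $\int_\S\alpha_\tau^{-1}=2\pi$, while Lemma \ref{L6.2} yields $(\widehat{\alpha_\tau})_0\le\hat a_0\le c_0$. For the zeta invariants, fix $\tau_0\in[0,T^*)$ and apply Corollary \ref{C4.1} to the shifted variation $\tau\mapsto\alpha_{\tau_0+\tau}$: using $d/d\theta=iD$ and $D\H=\Lambda$, a direct computation shows that its tangent direction $\B(\alpha_{\tau_0})$ coincides with $\alpha_{\tau_0}g'-\alpha_{\tau_0}'g$ for $g=i\H\alpha_{\tau_0}$. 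Hence $\frac{\pa}{\pa\tau}\zeta_{\alpha_\tau}(s)\big|_{\tau=\tau_0}\le 0$ for every real $s$, so $\tau\mapsto\zeta_{\alpha_\tau}(-2m)$ is non-increasing and stays $\le\zeta_a(-2m)\le c_{m+1}$. This proves $\alpha_\tau\in\K_c$ throughout $[0,T^*)$.

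Finally, assume for contradiction $T^*<\infty$. Theorem \ref{Th6.1} makes $\K_c$ compact in $C^\infty(\S)$ and in particular supplies a uniform lower bound $\alpha_\tau\ge\ep_c>0$ and uniform bounds on the $2N+1$ Fourier coefficients of $\alpha_\tau$ on $[0,T^*)$. A bounded trajectory of a polynomial ODE extends continuously to its closed right endpoint, so the limit $\alpha_{T^*}\in V_N$ exists, lies in the closed set $\K_c$, and satisfies $\alpha_{T^*}\ge\ep_c$. Restarting Picard--Lindel\"of at $T^*$ with initial data $\alpha_{T^*}$ extends the solution smoothly past $T^*$, contradicting maximality. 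Thus $T^*=\infty$ and $\alpha_\tau\in\K_c$ for every $\tau\ge 0$. The main obstacle is closing this bootstrap loop: Corollary \ref{C4.1} requires positivity of $\alpha_\tau$ in order to be meaningful at all, and the crucial feature of Theorem \ref{Th6.1} is that the constant $\ep_c$ is \emph{independent of $\tau$}, which is exactly what prevents positivity from degenerating as $\tau\to T^*$.
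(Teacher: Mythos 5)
Your proof is correct and follows essentially the same approach as the paper: reduce \eqref{6.12} to a finite polynomial ODE system on the $(2N+1)$-dimensional space of degree-$N$ trigonometric polynomials via Lemma \ref{L6.1}, establish $\K_c$-invariance using Lemmas \ref{L6.2}, \ref{L6.3} and Corollary \ref{C4.1}, and use the compactness of $\K_c$ (Theorem \ref{Th6.1}, in particular the uniform lower bound $\ep_c$) to prevent blowup. The only cosmetic difference is in the globalization step: the paper fixes a uniform local existence time $\delta_N(\ep_c)$ valid across $\K_c$ and iterates the semigroup, whereas you take the maximal Picard--Lindel\"of solution and derive a contradiction from boundedness of the trajectory; these are interchangeable standard ODE arguments.
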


\begin{proof}
Of course
$a\in\K_c$
and we denote by
$\ep_c$
the constant that appears in \eqref{6.2}.

Now, let
$v\in \K_c$
be such that
$\hat v_k=0$
for
$|k|> N$.
We consider the differential equation \eqref{6.12} with the initial data
$v$.
Due to \eqref{6.9}, we translate \eqref{6.12} into the system of ODE's for Fourier coefficients of the smooth path
$\alpha$:
\begin{equation}
{\pa \alpha_{k,\tau}\over \pa\tau}= -k\,   \alpha_{0,\tau} \alpha_{k,\tau} + \sum_{1\le l,m\le N,\ l-m=k}(l+m)\,\alpha_{l,\tau}\,\overline{\alpha_{m,\tau}}
\quad (0\le k\le N, \tau\in \R)
                                         \label{6.14}
\end{equation}
with the initial conditions
\begin{equation}
\alpha_{k,0}=\hat v_k\quad (0\le k\le N).
                                    \label{6.15}
\end{equation}

Observe that \eqref{6.14} is a Riccati type system, i.e., its right-hand side is quadratic in the unknowns.
Standard facts of ODE's theory give us the following statement on the local existence of a solution:

\begin{lemma} \label{L6.4}
Given an integer
$N\in\N$
and
$\ep_c>0$,
there exists
$\delta_N=\delta_N(\ep_c)>0$
such that the following statement is true.

For every sequence
$\rho=(\rho_k)_{0\le k\le N}\in \C^{N+1}$
satisfying
$\sup_{0\le k\le N}|\rho_k|\le \ep_c^{-1}$,
system \eqref{6.14} has a unique solution
$$
{\tilde \alpha}_\rho=({\tilde \alpha}_{0,\tau,\rho},\dots,{\tilde \alpha}_{N,\tau,\rho})\in C^\infty((-\delta_N,\delta_N),\C^{N+1})
$$
satisfying the initial condition
${\tilde \alpha}_\rho(0)=\rho$.
\end{lemma}

The dependance of
$\delta_N$
on
$\ep_c$
is not designated explicitly since
$\ep_c$
is fixed in our further arguments.

We apply Lemma \ref{L6.4} to
$\rho=(\hat v_k)_{0\le k\le N}$
and then define
$$
\alpha_{\tau,v}(\theta)=\tilde \alpha_{0,\tau,\rho}+\sum_{k=1}^N (\tilde\alpha_{k,\tau,\rho}\,e^{ik\theta}
+ \overline{\tilde \alpha_{k,\tau,\rho}}\,e^{-ik\theta}).
$$
The path
$\alpha_{\tau,v}$
belongs to
$C^\infty((-\delta_N,\delta_N),C^\infty(\S))$
and
$\alpha_{0,v}=v$.
The path satisfies \eqref{6.13} for
$\tau\in (-\delta_N,\delta_N)$.
Due to Lemma \ref{L6.1}, the path also satisfies equation \eqref{6.12} for
$\tau\in (-\delta_N,\delta_N)$.

We are going to prove that
$\alpha_{\tau,v}\in \K_c$
for any
$\tau\in (0,\delta_N)$.
First we use Lemma \ref{L6.2} to obtain
$$
\int_{\S}\alpha_{\tau,v}\le \int_{\S}\alpha_{0,v}=\int_{\S}v\le c_0.
$$
Then we set
$T_v=\sup\{s\in (0,\delta_N)\mid \alpha_{s,v}\ \textrm{is a positive function}\}$.
By Lemma \ref{L6.3},
$$
\int_{\S}\alpha_{\tau,v}^{-1}=\int_{\S}\alpha_{0,v}^{-1}=\int_{\S}v^{-1}=2\pi
$$
for
$\tau\in[0,T_v)$.
Then we can apply Corollary \ref{C4.1} to obtain
$$
\zeta_{\alpha_{\tau,v}}(-1)\le \zeta_v(-1)\le c_1,\quad \zeta_{\alpha_{\tau,v}}(-2m)\le \zeta_v(-2m)\le c_{m+1}\ (m\in\N).
$$
Therefore
$\alpha_{\tau,v}\in \K_c$
for
$\tau\in(0,T_v)$.
In particular, by \eqref{6.2},
$$
\alpha_{\tau,v}\ge \ep_c\quad \textrm{for}\quad\tau\in(0,T_v).
$$
Hence we necessarily have
$T_v=\delta_N$.

Now, we are going to prove that the solution
$\alpha_{\tau,v}$
can be extended to all positive times
$\tau$.
To this end we introduce the one-parametric family of continuous maps
$$
\varphi^\tau:\K_c\cap {\mathcal F}_N\rightarrow \K_c\cap {\mathcal F}_N,\quad v\mapsto\alpha_{\tau,v}\quad(0\le\tau<\delta_N),
$$
where 
${\mathcal F}_N$ 
denotes the 
$(2N+1)$-dimensional subspace of 
$C^\infty(\S)$ 
consisting of smooth functions 
$f$ 
such that 
$\hat f_k=0$ 
for 
$|k|>N$.
By the well known group property of a solution to the Cauchy problem for ODE's,
$$
\varphi^{\tau_1+\tau_2}=\varphi^{\tau_1}\circ\varphi^{\tau_2}\quad\mbox{for}\quad \tau_1,\tau_2\in[0,\delta_N),\ \tau_1+\tau_2<\delta_N.
$$
Now, representing an arbitrary
$\tau\ge0$
as
$\tau=\tau_1+\dots+\tau_p$
with
$\tau_i\in[0,\delta_N)\ (1\le i\le p)$,
we define
$$
\varphi^\tau=\varphi^{\tau_1}\circ\dots\circ\varphi^{\tau_p}:\K_c\rightarrow \K_c.
$$
Then
$\alpha_{\tau,v}=\varphi^\tau(v)$
is well defined for all $\tau\ge0$.

Uniqueness of the solution
$\alpha$
to the Cauchy problem \eqref{6.11}--\eqref{6.13} follows from the local uniqueness of Lemma \ref{L6.4}.
\end{proof}

\subsection{Convergence as $\tau\to +\infty$}

\begin{theorem} \label{Th6.3}
Let a positive function
$a\in C^\infty(\S)$
satisfy the normalization condition \eqref{1.4}.
Let
$\alpha\in C^\infty([0,\infty),C^\infty(\S))$
be a deformation of
$a$
satisfying equation \eqref{1.8}. Let
$\K_c$
be a compact set in
$C^\infty(\S)$
defined by \eqref{6.1} for a sequence
$c=\{c_k\}_{k\in \N}$
of positive reals such that
$$
c_0\ge \hat a_0,\quad c_1\ge \zeta_a(-1),\quad c_{m+1}\ge \zeta_a(-2m)\ (m\in \N\b\{0\}).
$$
Then $\alpha$ possesses the following properties:

{\rm (1)}
for all
$k,m\in\N$
the estimate
\begin{equation}
\sup_{\tau\in [0,+\infty)}\left\|{\pa^k\alpha_\tau\over\pa \tau^k}\right\|_{C^m(\S)}\le C_{m,k}
                                      \label{6.16}
\end{equation}
holds with a constant
$C_{k,m}$
that depends on
$k,m$
and the constants
$c_0,\ldots, c_{k+m+2}$;

{\rm (2)}
$\alpha_\tau$
converges to the constant function
${\mathbf 1}$
in
$C^\infty(\S)$
as
$\tau\to +\infty$.
\end{theorem}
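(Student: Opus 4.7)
I plan to show that the compact set $\K_c$ is forward-invariant under the flow \eqref{1.8}; part (1) will then follow from the quantitative compactness estimates in Theorem \ref{Th6.1} combined with a Leibniz bootstrap applied to \eqref{1.8}, while part (2) will follow from a Barbalat-type argument driven by the monotone quantity $\int_\S\alpha_\tau$. Invariance itself rests on three ingredients: Lemma \ref{L6.3} preserves $\int_\S\alpha_\tau^{-1}=2\pi$; Lemma \ref{L6.2} makes $\hat\alpha_{\tau,0}=\frac{1}{2\pi}\int_\S\alpha_\tau$ non-increasing, so $\hat\alpha_{\tau,0}\le\hat a_0\le c_0$; and the autonomous character of \eqref{1.8} permits shifting the base time in Corollary \ref{C4.1}, whose distinguished direction $\beta=-a(\Lambda a)+(\H a)(Da)$ is precisely the right-hand side of \eqref{1.8} with $a$ replaced by $\alpha_{\tau_0}$. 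This gives ${\pa\zeta_{\alpha_\tau}(s)\over\pa\tau}\le0$ at every $\tau_0\ge 0$ and every real $s$; specializing to $s=-1$ and $s=-2m$ yields $\zeta_{\alpha_\tau}(-1)\le c_1$ and $\zeta_{\alpha_\tau}(-2m)\le c_{m+1}$, so $\alpha_\tau\in\K_c$ for all $\tau\in[0,\infty)$.

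\textbf{Proof of (1).} The case $k=0$ is the bound \eqref{6.3} of Theorem \ref{Th6.1}, with $C_{0,m}$ depending only on $c_0,\dots,c_{m+2}$. For $k\ge1$ I would induct on $k$, differentiating \eqref{1.8} via Leibniz to get a finite sum
$$
{\pa^k\alpha_\tau\over\pa\tau^k}=\sum_{j=0}^{k-1}\binom{k-1}{j}\Big[-\Big({\pa^j\alpha_\tau\over\pa\tau^j}\Big)\Lambda\Big({\pa^{k-1-j}\alpha_\tau\over\pa\tau^{k-1-j}}\Big)+\Big(\H{\pa^j\alpha_\tau\over\pa\tau^j}\Big)\Big(D{\pa^{k-1-j}\alpha_\tau\over\pa\tau^{k-1-j}}\Big)\Big].
$$
Since $\Lambda$, $D$ and $\H$ each cost at most one derivative in $C^m$-norm and $C^m(\S)$ is a multiplicative algebra, combining the inductive hypothesis with the $\K_c$-bound $\|\alpha_\tau\|_{C^{m+k}(\S)}\le\mathrm{const}(c_0,\dots,c_{m+k+2})$ delivered by Theorem \ref{Th6.1} yields the asserted estimate on $\|\pa_\tau^k\alpha_\tau\|_{C^m(\S)}$.

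\textbf{Proof of (2).} Cauchy--Schwarz gives $(2\pi)^2=\bigl(\int_\S\alpha_\tau^{-1/2}\alpha_\tau^{1/2}\bigr)^2\le\int_\S\alpha_\tau^{-1}\cdot\int_\S\alpha_\tau=2\pi\int_\S\alpha_\tau$, so $\int_\S\alpha_\tau\ge2\pi$. Combined with Lemma \ref{L6.2}, the map $\tau\mapsto\int_\S\alpha_\tau$ is non-increasing and bounded below, hence converges; integrating \eqref{6.10} yields
$$
\int_0^\infty\l\alpha_{\tau,+},\Lambda\alpha_{\tau,+}\r\,d\tau<\infty.
$$
By part (1) with $k=1$, the non-negative function $\tau\mapsto\l\alpha_{\tau,+},\Lambda\alpha_{\tau,+}\r=2\pi\sum_{k\ge1}k\,|\hat\alpha_{\tau,k}|^2$ has bounded $\tau$-derivative, is therefore uniformly continuous on $[0,\infty)$, and a uniformly continuous non-negative integrable function must tend to zero. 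Consequently $\hat\alpha_{\tau,k}\to0$ as $\tau\to\infty$ for each $k\ne0$. Given any sequence $\tau_n\to\infty$, the $C^\infty(\S)$-compactness of $\K_c$ (Theorem \ref{Th6.1}) extracts a subsequence $\alpha_{\tau_{n_j}}\to\alpha_\infty$ in $C^\infty(\S)$; Fourier vanishing forces $\alpha_\infty$ to be constant, and then the normalization $\int_\S\alpha_\infty^{-1}=2\pi$ forces $\alpha_\infty=\mathbf 1$. Since every subsequence admits a further subsequence with the same limit, we conclude $\alpha_\tau\to\mathbf 1$ in $C^\infty(\S)$. The main technical obstacle is the derivative bookkeeping in part (1), on which the uniform continuity argument of part (2) crucially depends; all remaining steps flow directly from the invariance of $\K_c$.
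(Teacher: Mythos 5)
Your proof is correct, and part (1) follows essentially the same route as the paper: forward invariance of $\K_c$ via Lemma \ref{L6.3}, Lemma \ref{L6.2}, and (shifting the base time by autonomy) Corollary \ref{C4.1}, then the $k=0$ bound from Theorem \ref{Th6.1}, then Leibniz induction on $k$ applied to \eqref{1.8}.

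Part (2) is where your route genuinely diverges. The paper argues by contradiction: it first extracts a $C^\infty$-convergent subsequence $\alpha_{\tau_{n_k}}\to\alpha_\infty$, assumes $\alpha_\infty$ is non-constant so that $\l\alpha_{\infty,+},\Lambda\alpha_{\infty,+}\r>0$, then combines \eqref{6.10} with the uniform bound on $\pa_\tau^2\int_\S\alpha_\tau$ coming from \eqref{6.16} to produce a fixed drop in $\int_\S\alpha_\tau$ at infinitely many well-separated times, contradicting that this monotone quantity is bounded below by $2\pi$. Your argument is more direct: the quantity $\l\alpha_{\tau,+},\Lambda\alpha_{\tau,+}\r\ge 0$ is integrable on $[0,\infty)$ because $\int_\S\alpha_\tau$ is non-increasing and bounded below, and its $\tau$-derivative is uniformly bounded by \eqref{6.16} with $k=1$; Barbalat's lemma then yields $\l\alpha_{\tau,+},\Lambda\alpha_{\tau,+}\r\to0$, hence $\hat\alpha_{\tau,k}\to0$ for every $k\ne0$, and each subsequential limit supplied by compactness of $\K_c$ is automatically the constant $\mathbf 1$ by the preserved normalization. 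Both routes consume the same data (monotonicity of $\int_\S\alpha_\tau$, the $C^2$-in-$\tau$ bound from part (1), compactness of $\K_c$); the Barbalat route identifies the limit directly rather than ruling out non-constant limits by contradiction, which is a small but real simplification.
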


\begin{proof}
Repeating our arguments from the proof of Lemma \ref{L6.4}, we prove that
$\alpha_\tau\in \K_c$
for any
$\tau \in [0,+\infty)$.
Therefore \eqref{6.16} holds for
$k=0$.
A similar estimate holds for
$\H \alpha_\tau$
in place of
$\alpha_\tau$.
Then we prove the estimate \eqref{6.16} for any
$k$
by induction on
$k$
and by iterative differentiation of the equation \eqref{6.12}.

Now we prove the second property. Let
$\{\tau_k\}_{k\in\N}$
be an increasing sequence of positive reals such that
$\tau_k\to +\infty$
as
$k\to +\infty$.
Since
$\K_c$
is a compact in
$C^\infty(\S)$,
there exists a subsequence
$\{\tau_{k_n}\}_{n\in \N}$
such that
$\alpha_{\tau_{k_n}}$
converges in
$C^\infty(\S)$
to some function
$\alpha_\infty$.
We are going to prove that
$\alpha_{\infty}={\mathbf 1}$.
Since the limit
$\alpha_\infty$
is then unique, this would prove the second statement of the theorem.

Since
$\alpha_\tau>0$
and
$\int_{\S}\alpha_\tau^{-1}=2\pi$
for any
$\tau \ge 0$,
we have by the Bunyakovsky-Cauchy-Schwarz inequality
$$
1\le (2\pi)^{-2}\int_{\S}\alpha_\tau^{-1}\int_{\S}\alpha_\tau=(2\pi)^{-1}\int_{\S}\alpha_\tau.
$$
Therefore
\begin{equation}
\eta=\inf_{\tau\in[0,+\infty)}\int_{\S}\alpha_\tau\ge 2\pi.
                          \label{6.17}
\end{equation}
We also recall that
$\int_{\S}\alpha_\tau$
is a non-increasing function of
$\tau$
(see Lemma \ref{L6.2}). Hence
$$
\eta=\lim_{k\to +\infty}\int_{\S}\alpha_{\tau_{n_k}}=\int_{\S}\alpha_{\infty}.
$$

First assume that
$\alpha_\infty$
is not a constant function. Then
$$
\l \alpha_{\infty,+},\Lambda \alpha_{\infty,+} \r>0.
$$
Hence, by \eqref{6.10},
\begin{equation}
\left.{\pa \int_\S \alpha_\tau\over \pa \tau}\right|_{\tau=\tau_{n_k}}=-4\l \alpha_{\tau_{n_k},+},\Lambda \alpha_{\tau_{n_k},+} \r\to -4\l \alpha_{\infty,+},\Lambda \alpha_{\infty,+} \r<0\textrm{ as }\ k\to\infty.
                                       \label{6.18}
\end{equation}

Then using \eqref{6.16} we obtain that
\begin{equation}
\sup_{\tau\in (0,\infty)}\Big|{\pa^2 \int_\S \alpha_\tau\over \pa \tau^2}\Big|\le C
                                \label{6.19}
\end{equation}
with some positive constant
$C$
that depends on
$\K_c$.

Estimates \eqref{6.18} and \eqref{6.19} prove the existence of
$k_0\in \N$,
$\delta>0$
and
$r>0$
such that
$$
\int_\S \alpha_{\tau_{n_k}+\delta} \le \int_\S \alpha_{\tau_{n_k}}-r
$$
for any
$k\ge k_0$.
Hence
$\eta=-\infty$
since
$\int_\S \alpha_{\tau}$
is non-increasing in
$\tau$.
This contradicts \eqref{6.17}.

We have proved that
$\alpha_\infty$
is a constant function. The normalized condition \eqref{1.4} is preserved along the path
$ \alpha$,
and we obtain
$\alpha_\infty={\mathbf 1}$.
\end{proof}

\subsection{Final step}
We prove Theorem \ref{Th1.3}.
Let a positive function
$a\in C^\infty(\S)$
satisfy the normalization condition \eqref{1.4}.

Let us recall the algebraic definition \cite{MS} of zeta invariants
$\zeta_b(-2m)\ (m=1,1,\dots)$
for a positive function
$b\in C^\infty(\S)$:
$$
\zeta_b(-2m)=\sum\limits_{j_1+\dots +j_{2m}=0} N_{j_1\dots j_{2m}}\,{\hat b}_{j_1}{\hat b}_{j_2}\dots {\hat b}_{j_{2m}},
$$
where, for $j_1+\dots+ j_{2m}=0$,
\begin{equation}
\begin{aligned}
N_{j_1\dots j_{2m}}=\sum\limits_{n=-\infty}^\infty
\Big[&\left|n(n+j_1)(n+j_1+j_2)\dots(n+j_1+\dots+ j_{2m-1})\right|\\
&-n(n+j_1)(n+j_1+j_2)\dots(n+j_1+\dots +j_{2m-1})\Big].
\end{aligned}
                                               \label{6.20}
\end{equation}
There is only a finite number of nonzero summands on the right-hand side of \eqref{6.20} since the expression
$$
f(n)=n(n+j_1)(n+j_1+j_2)\dots(n+j_1+\dots+ j_{2m-1})
$$
is a polynomial of degree
$2m$
in
$n$ which takes positive values for sufficiently large
$|n|$.

Now the function
$a$
is positive. Hence there exists
$N_0\in\N$
such that for
$N\ge N_0$
$$
\sum_{|n|\le N}\hat a_k e^{i k\theta}>0\quad (\theta\in \R).
$$
We set
\begin{equation}
a^{(N)}(\theta)=c_N\sum_{|n|\le N}\hat a_ke^{ik\theta}\quad(N\ge N_0),
                                       \label{6.21}
\end{equation}
where
$c_N$
is determined by the normalized condition
\begin{equation}
\int_{\S}\Big(\sum_{|n|\le N}\hat a_ke^{ik\theta}\Big)^{-1}=2\pi c_N.
                                        \label{6.22}
\end{equation}
We also recall that
$\hat a_k=O(|k|^{-\infty})$
as
$k\to\infty$
since
$a$
is a smooth function.

By Kogan's formula \eqref{6.5}, \eqref{6.22} and the algebraic definition \eqref{6.20} of zeta invariants,
\begin{equation}
\lim_{N\to+\infty}\zeta_{a^{(N)}}(-1)=\zeta_a(-1),\quad \lim_{N\to+\infty}\zeta_{a^{(N)}}(-2m)= \zeta_a(-2m)\ (m=1,2,\dots).
                                              \label{6.23}
\end{equation}

Let
$N_1\ge N_0$
be large enough. By \eqref{6.23} we can choose
$\delta_{N_1} >0$
such that
\begin{eqnarray*}
\widehat{(a^{(N)})}_0&\le& c_0=\hat a_0+\delta_{N_1},\\
\zeta_{a^{(N)}}(-1)&\le&c_1=\zeta_a(-1)+\delta_{N_1},\\
\zeta_{a^{(N)}}(-2m)&\le&c_{m+1}=\zeta_a(-2m)+\delta_{N_1}
\end{eqnarray*}
for
$1\le m\le 2N_1$
and
$N\ge N_0$.

Now, consider the compact set
$\K_c^{(N)}$
in
$C^\infty(\S)$
defined by the sequence
$(c_m^{(N)})_{m\in \N}$
that is defined as follows:
\begin{equation}
c_m^{(N)}=c_m\ \mbox{for}\ 0\le m\le 2N_1\quad \textrm{and}\quad c_{k+1}^{(N)}=\zeta_{a^{(N)}}(-2k)\ \mbox{for}\ k\ge 2N_1+1.
                                             \label{6.24}
\end{equation}
We apply Theorem \ref{Th6.2} to
$a^{(N)}$
and
$\K_c^{(N)}$:
There exists a path
$\alpha^{(N)}\in C^\infty([0,\infty),C^\infty(\S))$
that converges in
$C^\infty(\S)$
to
${\mathbf 1}$
as
$\tau\to +\infty$
and satisfies \eqref{6.12}--\eqref{6.13} with the initial condition
$\alpha^{(N)}_0=a^{(N)}$.
Additionally, the estimate
\begin{equation}
\sup_{\tau\in [0,+\infty)}\Big\|{\pa^k\alpha_\tau^{(N)}\over\pa \tau^k}\Big\|_{C^{N_1-1}(\S)}\le C_{N_1,k}
                                                 \label{6.25}
\end{equation}
holds for any
$0\le k\le N_1-1$,
where the constant
$C_{N_1,k}$
depends on
$N_1$, $k$ and
constants
$c_0,\ldots, c_{2N_1+1}$
given in the definition of
$\K_c^{(N)}$.

Estimate \eqref{6.25} shows the existence of a subsequence
$(\alpha^{(N_k)})_{k\in  \N}$
that converges to some
$\alpha\in C^{N_1-2}([0,\infty),C^{N_1-2}(\S))$
in the space
$C^{N_1-2}([0,\infty),C^{N_1-2}(\S))$.
Passing to the limit in \eqref{1.8}, we see that
$\alpha$
solves \eqref{1.8} with the initial condition
$$
\lim_{N\to\infty}\alpha^{(N)}(0)=\lim_{N\to\infty}a^{(N)}=a
$$
(the limits are taken in $C^{N_1-2}(\S)$).
Since
$N_1$
is arbitrary  and since the
$\alpha^{(N)}$'s
do not actually depend on
$N_1$,
we obtain that
$$
\alpha\in C^\infty([0,\infty),C^\infty(\S)).
$$
The solution
$\alpha$
satisfies all statements of Theorem \ref{Th1.3}.\hfill $\Box$

\section{Concluding remarks}
In our previous work \cite{JS2}, the inequality
$\zeta_a(s)-2\zeta_R(s)\ge 0$
was proved for all real
$s$
satisfying
$|s|\ge1$.
The proof was based on inequalities \eqref{5.16}--\eqref{5.17} and essentially used the convexity of the function
$x\mapsto x^s\ (x\ge0)$
for
$s\ge1$. Together with \eqref{5.16}--\eqref{5.17}, the convexity gives
$$
\l (\Lambda_a+P_0)^s \phi_n,\phi_n\r\ge |n|^s,\quad \l(\Lambda_a+P_0)^{-s}\phi_n,\phi_n\r\ge |n|^{-s}\quad (n\in \Z\b\{0\},\ s\ge 1).
$$
These inequalities are definitely wrong for
$s\in (0,1)$.
Otherwise we would have
$$
\l \ln(\Lambda_a+P_0) \phi_n,\phi_n\r= \ln |n|\quad (n\in  \Z\b\{0\}).
$$
But a computations in a neighborhood of
$a={\mathbf 1}$
shows that the inequalities do not hold in the general case.

In the current work, we have developed an alternative approach for proving the inequality
$\zeta_a(s)-2\zeta_R(s)\ge0$
for all
$s\in\R$.

\bigskip

Let us reproduce equation \eqref{1.8}
\begin{equation}
{\pa\alpha_\tau\over \pa\tau}=-\alpha_\tau (\Lambda \alpha_\tau)+(\H\alpha_\tau) (D\alpha_\tau)
                                                 \label{7.1}
\end{equation}
together with the initial condition
\begin{equation}
\alpha_0=a.
                                                 \label{7.2}
\end{equation}
Observe that \eqref{7.1} is a Riccati type equation with non-local quadratic terms. We have proved the global existence of a solution to the Cauchy problem \eqref{7.1}--\eqref{7.2} at least for a positive function
$a\in C^\infty(\S)$.
But the corresponding uniqueness question remains open.

Another Riccati type equation with non-local quadratic terms is well known in the {\it layer stripping method} for Electrical Impedance Tomography, see \cite{CNS} and references therein. To our knowledge, the uniqueness and global existence of a solution to the Cauchy problem for the latter equation are proved in the radially symmetric case only. Nevertheless, in more general cases, some numerical methods are developed  which are based on the equation.

\end{document}